\let\mathcal\mathscr
\numberwithin{equation}{section}
\newtheorem{theorem}{Theorem}[section]
\newtheorem{lemma}[theorem]{Lemma}
\newtheorem{proposition}[theorem]{Proposition}
\newtheorem{conjecture}[theorem]{Conjecture}
\newtheorem{prop}[theorem]{Proposition}
\newtheorem{cor}[theorem]{Corollary}
\theoremstyle{definition}
\newtheorem{remark}[theorem]{Remark}
\newtheorem{definition}[theorem]{Definition}
\renewcommand{\d}{\mathrm{d}}
\renewcommand{\phi}{\varphi}
\renewcommand{\rho}{\varrho}
\newcommand{\PP}{\mathbb{P}}
\renewcommand{\AA}{\mathbb{A}}
\newcommand{\FF}{\mathbb{F}}
\newcommand{\ZZ}{\mathbb{Z}}
\newcommand{\NN}{\mathbb{N}}
\newcommand{\QQ}{\mathbb{Q}}
\newcommand{\RR}{\mathbb{R}}
\newcommand{\CC}{\mathbb{C}}
\newcommand{\TT}{\mathbb{T}}
\newcommand{\bell}{\boldsymbol{\ell}}
\renewcommand{\leq}{\leqslant}
\renewcommand{\geq}{\geqslant}
\renewcommand{\bar}{\overline}
\newcommand{\h}{\mathbf{h}}
\newcommand{\g}{\mathbf{g}}
\newcommand{\x}{\mathbf{x}}
\newcommand{\y}{\mathbf{y}}
\renewcommand{\u}{\mathbf{u}}
\newcommand{\z}{\mathbf{z}}
\renewcommand{\a}{\mathbf{a}}
\newcommand{\ve}{\varepsilon}
\DeclareMathOperator{\mor}{Mor}
\DeclareMathOperator{\Conf}{Conf}
\def \Mor {\operatorname{Mor}}
\def \Ql {\mathbb Q_{\ell}}
\def \Pconf {\operatorname{PConf}}
\def \Conf {\operatorname{Conf}}
\def \mcL {\mathcal L}
\def \red {\operatorname{red}}
\def \chaar {\operatorname{char}}
\def \sgn {\operatorname{sgn}}
\begin{document}

\title{A geometric version of the circle method}

\author{Tim Browning}
\address{IST Austria\\
Am Campus 1\\
3400 Klosterneuburg\\
Austria}
\email{tdb@ist.ac.at}

\author{Will Sawin}
\address{Columbia University\\  
Department of Mathematics\\
2990 Broadway\\ New York\\ NY 10027\\ USA}
\email{sawin@math.columbia.edu}

\subjclass[2010]{14H10 (11P55, 14F20, 14G05)}

\begin{abstract}
We develop a geometric version of the circle method and use it to compute the compactly supported cohomology of the space  
of rational curves through a point on a smooth affine hypersurface of sufficiently low degree. 
\end{abstract}

\date{\today}

\maketitle

\thispagestyle{empty}
\setcounter{tocdepth}{1}
\tableofcontents

\section{Introduction}\label{s:intro}

Let $n\geq 2$ and  let $X \subset \mathbb A^n$ be a smooth hypersurface defined by a polynomial $f(x_1,\dots,x_n)$ of degree $k\geq 3$ with coefficients in a field $K$. 
Assume that the leading terms of $f$  
define a smooth projective hypersurface $Z\subset \mathbb P^{n-1}$ of
degree $k$.
For a given point  $P=(x_1: \ldots :x_n)\in Z(K)$, the  goal of this paper is to study the space $\Mor_{d,P}(\mathbb A^1, X)$ of  
$n$-tuples of polynomials $g_1,\dots,g_n$ of degree $d$ that satisfy $f(g_1,\dots,g_n)=0$ and whose leading 
coefficients
are exactly $(x_1,\dots,x_n)$.
The space $\mor_{d,P}(\AA^1,X)$ is cut out by $dk$ equations in $dn$ variables and so its expected dimension   is $d(n-k)$.  
One of the outcomes of our work will be a proof of this fact for $X$ of sufficiently low degree. 

There has been a lot of recent work directed at  studying 
the Kontsevich moduli space 
$\overline{\mathcal{M}}_{0,0}(Z,d)$ of
degree $d$ rational curves on degree $k$ hypersurfaces $Z\subset \PP^{n-1}$. 
Building on pioneering work of 
 Harris, Roth and Starr \cite{HRS}, 
Riedl and Yang \cite{RY} have proved that 
$\overline{\mathcal{M}}_{0,0}(Z,d)$
 is an irreducible, local complete intersection scheme of the expected dimension, provided that $Z$
is general and $n\geq k+3$. 
This can be extended to all smooth hypersurfaces when $k=3$, 
thanks to work of  Coskun and Starr \cite{CS}.
Finally, for 
$n> 2^{k-1}(5k-4)$, Browning and Vishe \cite{BV'} 
have adapted the Hardy--Littlewood circle method to handle the space of 
degree $d$ rational curves on   arbitrary  smooth hypersurfaces of degree $k$ in $\PP^{n-1}$.

In the present investigation we wish to go  further and examine the  
compactly supported cohomology of the space  
of rational curves on a  smooth hypersurface.  It turns out that it is technically easier to work with affine space and to fix the point at infinity. 
This has led us to focus our present efforts on understanding $\Mor_{d,P}(\mathbb A^1, X)$, for $X \subset \mathbb A^n$ a smooth hypersurface over a field $K$, whose leading terms 
define a smooth projective hypersurface $Z\subset \mathbb P^{n-1}$, with 
 $P\in Z(K)$.

Let  $\Pconf_m$ be the pure configuration space parametrising ordered $m$-tuples of distinct points in $\AA^1$, and let $\Conf_m$ be the configuration space parametrising unordered $m$-tuples. 
The space $\Pconf_m$ carries a free action of 
 the symmetric group $S_m$, the quotient by which is $\Conf_m$.
The cohomology  of these spaces has been studied by 
Arnol'd \cite{arnold}, a topic that has been revisited and connected to number theory over function fields by 
Chuch, Ellenberg and Farb \cite{CEF}.
Let $\sgn$ be the sign representation of $S_m$
and observe that  $\sgn^{n-1}$ is the trivial representation when $X$ is even dimensional.
We are now ready to reveal our  main result.

 \begin{theorem}\label{main2}   
 Let $\ell$ be a prime.
 If  $\chaar(K) \neq 0$ assume that   $\chaar(K)>k$ and  $\ell$ has even order modulo $\chaar(K)$. 
Assume that $d\geq k-1\geq 2$ 
and $n > 2^{k}(k-1) $.  
 There exists a spectral sequence $E^{m,s}_r$, whose first page  $E^{m,s}_1$ is
 $$
 \begin{cases}
 ( H^{s+mn  }_c (\Pconf_m,\Ql) \otimes H^{n-1}_c(X_{\overline{K}},\Ql)^{\otimes m}  \otimes \sgn^{n-1} )^{S_m}  & \text{ if $0\leq m\leq d$,}\\
 0 &\text{ otherwise,}
 \end{cases}
 $$ 
and where the differential $d_r^{m,s}:E_r^{m,s}\to E_{r}^{m+r,s-r+1}$ 
 has bidegree $(r,1-r)$, which converges to a complex whose $i$th cohomology is isomorphic as a $\mathbb Q_\ell$-vector space to $H^{i+2d(n-k)}_c( \Mor_{d,P}(\mathbb A^1,X)_{\overline{K}},\Ql)$, provided that 
$$
 i>-
 4\left( \left\lfloor \frac{d}{k-1}\right\rfloor \left(\frac{n}{2^{k}}-k+1\right)-1\right).
$$ 
Furthermore,  $d_r^{m,s}$ vanishes on every page $E_r^{m,s}$ such that  $r$  is odd.
\end{theorem}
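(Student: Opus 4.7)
The plan is to develop a geometric analogue of the Hardy--Littlewood circle method, with the $\ell$-adic Fourier--Deligne--Laumon transform taking the place of the classical integral $\int_{\RR/\ZZ}$. The starting observation is that $\Mor_{d,P}(\AA^1,X) = \Phi^{-1}(0)$, where $M := \Mor_{d,P}(\AA^1,\AA^n) \cong \AA^{dn}$ and $\Phi \colon M \to V$ is $g \mapsto f(g)$, the target $V$ being the affine space of polynomials in $t$ of degree at most $dk - 1$ (the $t^{dk}$-coefficient automatically vanishes, since $P \in Z$). Fourier inversion will then identify $H^*_c(\Phi^{-1}(0),\Ql)$ (up to shift) with $H^*_c(M \times V^\vee, \mcL_\psi(\langle \xi, f(g)\rangle))$, which by Fubini is an ``integral'' over $\xi \in V^\vee$ of the exponential-sum cohomology $H^*_c(M, \mcL_\psi(\langle \xi, f(g)\rangle))$, in exact parallel with the circle-method integrand $\sum_g e(\alpha f(g))$.

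Next I would stratify $V^\vee$ in the manner of major and minor arcs. Every $\xi \in V^\vee$ may be written as a combination of point-evaluations (and their derivatives) along $\AA^1$, and I would let the stratum indexed by $m$ consist of those $\xi$ whose minimal support has size exactly $m$; this stratum is naturally fibred over $\Pconf_m$. On it the integrand factorises as a product of local contributions at the support points, and after integrating out the scalar coefficient at each point, the local factor should reduce---in the relevant cohomological range, and using the smoothness of $X$---to the top class $H^{n-1}_c(X_{\bar K},\Ql)$. Varying the point positions assembles these into $H^*_c(\Pconf_m,\Ql) \otimes H^{n-1}_c(X,\Ql)^{\otimes m}$, shifted by $mn$ to account for the $m$ local integrations over $\AA^n$. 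Descending along the free $S_m$-cover $\Pconf_m \to \Conf_m$ takes $S_m$-invariants, and the twist by $\sgn^{n-1}$ records the Koszul sign that appears when swapping two top classes of the $(n-1)$-dimensional $X$.

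The hard part will be bounding the contribution of the minor arcs: one must show that strata with $m > d$, together with all contributions from functionals $\xi$ of generic type, inject only into cohomological degrees below the threshold $i > -4(\lfloor d/(k-1)\rfloor(n/2^{k}-k+1)-1)$. This is where the hypothesis $n > 2^{k}(k-1)$ enters---precisely the classical threshold for handling degree-$k$ forms by Weyl differencing---and the required bounds should come from an $\ell$-adic refinement of the analytic estimates of Browning--Vishe \cite{BV'}. The filtration of $V^\vee$ by $m$ will then produce the stated spectral sequence with differentials of bidegree $(r,1-r)$, abutting to $H^*_c(\Mor_{d,P}(\AA^1,X),\Ql)$ shifted by $2d(n-k) = 2\dim\Mor_{d,P}(\AA^1,X)$, as dictated by the Fourier conventions. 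Finally, I would deduce the vanishing of the odd differentials from a weight-parity argument: the factors appearing on the $E_1$-page---configuration cohomology $H^*_c(\Pconf_m)$ (of Tate type), the top class $H^{n-1}_c(X)$, and the local Artin--Schreier contributions---all carry purely even Frobenius weights, whereas a differential of bidegree $(r,1-r)$ with $r$ odd would shift weights by an odd integer and must therefore vanish. The hypotheses $\chaar(K)>k$ and that $\ell$ have even order modulo $\chaar(K)$ are exactly what ensures this weight analysis applies.
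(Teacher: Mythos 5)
Your overall plan is the right one and matches the paper closely: Fourier inversion over $\mathbb{A}^{kd}$, stratification by the degree of the best rational approximation (your ``minimal support size'' heuristic becomes, in the paper, the rank condition defining the major-arc locus $A_m^{kd}$, together with a further vanishing step at non-squarefree denominators), local factorisation via CRT with $S_m$-descent along $\Pconf_m \to \Conf_m$, and minor-arc bounds by Weyl differencing together with a Lang--Weil reduction to the function-field point count of Browning--Vishe. You also correctly identify where $\sgn^{n-1}$ comes from, and the absence of a spreading-out step from $\FF_q$ to general $K$ is a small omission rather than a conceptual issue.

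The genuine gap is in the last paragraph, the claim that odd differentials vanish by a weight-parity argument. This does not work, for two reasons. First, $H^{n-1}_c(X_{\bar K},\Ql)$ is \emph{not} pure: for a smooth affine hypersurface with smooth hyperplane section at infinity it is a nontrivial extension with graded pieces of weight $n-1$ and $n-2$ (precisely the $\operatorname{Prim}_{n,k}$ and $\operatorname{Prim}_{n-1,k}$ pieces in Lemma~\ref{monodromy-of-hypersurfaces}), so the factors on the $E_1$-page carry both parities of weight once $m\geq 1$; there is no parity invariant here for Frobenius to preserve. Second, even granting purity for the sake of argument, the Tate twist on $E_1^{m,s}$ is $(kd-m-n(d-m))$ and the net weight shift of $d_r\colon E_r^{m,s}\to E_r^{m+r,s-r+1}$ works out to $r(n-3)+2$, whose parity depends on $n$, not just on $r$; so a parity argument cannot rule out all odd $r$ uniformly. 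What the paper actually does (\S\ref{s:diff}) is quite different: it works relatively over the moduli space $\mathcal{M}_{n,k}$ of pairs $(f,P)$, shows the whole spectral sequence is built out of the lisse sheaf $R^{n-1}pr_{\mathcal{M},X!}\Ql$ (Lemma~\ref{redo-everything}), and then uses the Katz--Sarnak big-monodromy theorem to produce an element of the geometric monodromy group acting on that sheaf with all eigenvalues $-1$ (Lemma~\ref{hypersurface-negation}). This element acts by the scalar $(-1)^m$ on the $m$th graded piece, giving a genuine $\ZZ/2$-grading which every differential must respect, and $d_r$ shifts it by $(-1)^r$; hence $d_r=0$ for odd $r$. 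Replacing Frobenius weights by monodromy is the essential idea you are missing, and it is also where the hypothesis that $\ell$ have even order modulo $\chaar(K)$ is used (in Lemma~\ref{conjugation}, on the minor-arc side) rather than in the parity argument you envisaged.
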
 

Note that  $
 4\left(\left\lfloor \frac{d}{k-1}\right\rfloor \left(\frac{n}{2^{k}}-k+1\right)-1\right)
\geq 0$
if and only if
\begin{equation}\label{eq:n-range}
\left\lfloor \frac{d}{k-1}\right\rfloor \left(\frac{n}{2^{k}}-k+1\right)\geq 1.
\end{equation}
Since 
  $d\geq k-1$, it suffices to have 
$n \geq 2^k k$ to ensure that \eqref{eq:n-range} holds.
Among other things, we shall use Theorem \ref{main2} to prove that 
\begin{equation}\label{eq:pen}
i>0 \quad \Longrightarrow \quad H^{i+2d(n-k)}_c(\Mor_{d,P}(\mathbb A^1,X)_{\overline{K}},\Ql)=0 ,
\end{equation}
provided that $K,\ell$ satisfy the hypotheses of the theorem, with 
 $d\geq k-1\geq 2$ and \eqref{eq:n-range} holding.
This is compatible with the expectation that $\Mor_{d,P}(\mathbb A^1,X)$ has dimension $d(n-k)$.

A few words are in order concerning the ranges for the indices appearing in the spectral sequence in Theorem \ref{main2}.
We will show later (in Lemma \ref{proof-support}) that  
\begin{equation}\label{eq:support}
E_1^{m,s}\neq 0 \quad\Longrightarrow \quad 
\begin{cases}
(m,s)=(0,0), \text{ or }\\
m \in [1,d] \text{ and } s \in [-m(n-1)+1 ,-m(n-2)].
\end{cases}
\end{equation}  
In particular the spectral sequence is supported in the quadrant where $s\leq 0$ and $m\geq 0$, and is bounded for fixed $d$. See  Figure \ref{fig:spec} for a depiction
of part of the support of $E_1^{m,s}$ when $n=4$,
which is triangular in shape. 
  
Using \eqref{eq:support}, we note that 
if $E^{m,s}_1$ vanishes for any $m,s$, then  $E^{m,s}_\infty$ also has to vanish. By construction, the $i$th cohomology of the complex that $E^{m,s}_1$ converges to is an iterated extension of $E^{m,s}_{\infty}$ for $m+s =i$. This therefore
leads to the
following consequence of Theorem~\ref{main2},
 which clearly implies \eqref{eq:pen}.

 \begin{figure} 
\[ 
\begin{footnotesize}
\begin{tikzcd} & m=0 & m=1 &  m=2 &m=3  & [-10pt] m=4\\
s=0 & \Ql \arrow[r, "d^{0,0}_1"] \arrow[drr, dashed, "d^{0,0}_2"] \arrow[ddrrr, swap, dashed, "d^{0,0}_3"]& 0 & 0  & 0 & 0 \\
s=-1 & 0 & 0 & 0  & 0 & 0 \\
s=-2 &0  & H 
& 0 & 0 & 0 \\
s=-3 & 0 & 0 & 0 & 0 & 0 \\
s=-4 & 0& 0 & \bigwedge^2 H 
& 0 & 0  \\
s= -5 & 0  & 0 & \bigwedge^2 H
& 0 & 0 \\
s=-6 & 0 & 0 & 0 & \bigwedge^3 H
& 0 \\
s= -7 & 0  & 0& 0 & H \otimes \bigwedge^2 H  
& 0 \\ 
s= -8& 0 & 0& 0 &  \left(H^{ \otimes 3}  \otimes \operatorname{std}\right)^{S_3} & \bigwedge^4 H  \\
s= -9 & 0  & 0  & 0  & 0  & \bigwedge^2 H \otimes \bigwedge^2 H
 \end{tikzcd}
 \end{footnotesize}
 \] 
 \caption{This figure illustrates part of our spectral sequence when $n=4$, where 
 $H=H^{n-1}_c(X_{\overline{K}}, \Ql)$ and $\operatorname{std}$ denotes the standard two-dimensional representation of $S_3$. The dashed arrows denote differentials that will appear on later pages of the spectral sequence. 
The entries are obtained via a straightforward calculation of the $S_m$-action on the cohomology of the configuration spaces from, for which one can use  the description of \cite[Thm.~4.5]{LehrerSolomon} or the presentation of Arnol'd \cite{arnold}. For general $n$, the non-zero terms form a triangle that extends downward and rightward from the point $(-(n-2),1)$. One can see that in this region of the spectral sequence, there are very few differentials that are potentially non-zero.
  The diagonal lines with constant $m+s$ in our figure will contribute to different cohomology groups. } \label{fig:spec}
\end{figure}

 \begin{cor}\label{cor:stable}
  Let $\ell$ be a prime. 
If  $\chaar(K) \neq 0$ assume that   $\chaar(K)>k$ and  $\ell$ has even order modulo $\chaar(K)$.   
Assume that $d\geq k-1\geq 2$
and $n > 2^{k}(k-1) $.  
If $ 
 i>-
 4\left(\left\lfloor \frac{d}{k-1}\right\rfloor \left(\frac{n}{2^{k}}-k+1\right)-1\right)$ 
 then   
$$
H^{i+2d(n-k)}_c(\Mor_{d,P}(\mathbb A^1,X)_{\overline{K}},\Ql)=0
$$
unless $i=0$ or  $i \in [1 -m(n-2),-m(n-3)]$ for some $m \in [1,d]$. 
\end{cor}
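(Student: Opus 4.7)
The plan is to deduce Corollary \ref{cor:stable} directly from Theorem \ref{main2} combined with the support statement \eqref{eq:support}, by tracking anti-diagonals of the spectral sequence.

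First, I would invoke Theorem \ref{main2} in the stated range of $i$, which identifies $H^{i + 2d(n-k)}_c(\Mor_{d,P}(\mathbb A^1, X)_{\overline{K}}, \Ql)$ with the $i$th cohomology of the complex to which the spectral sequence $E_r^{m,s}$ converges. By the standard formalism of a cohomological spectral sequence, this $i$th cohomology carries a filtration whose associated graded pieces are the terms $E_\infty^{m,s}$ with $m + s = i$. Hence it suffices to show that $E_\infty^{m,s} = 0$ for every $(m,s)$ lying on an anti-diagonal $m + s = i$ whose value $i$ is outside the allowed set of the corollary.

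Next, I would apply the support bound \eqref{eq:support} (to be proved in Lemma \ref{proof-support}): on the first page, the non-zero entries occur only at $(0,0)$ together with the band $1 \leq m \leq d$, $-m(n-1) + 1 \leq s \leq -m(n-2)$. Reading off total degrees $m + s$ from these regions yields the set $\{0\} \cup \bigcup_{m=1}^{d} [1 - m(n-2), -m(n-3)]$, which matches exactly the allowed values in the corollary. For any $i$ outside this set, every first-page term on the anti-diagonal $m + s = i$ already vanishes; since each differential $d_r^{m,s} : E_r^{m,s} \to E_r^{m+r, s-r+1}$ raises the total degree $m+s$ by one, vanishing of the anti-diagonal $m + s = i$ persists on every page, giving $E_\infty^{m,s} = 0$ for all $(m,s)$ with $m + s = i$, and hence vanishing of the $i$th cohomology.

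The main substantive work has nothing to do with this deduction: it is invested in Theorem \ref{main2} itself and in Lemma \ref{proof-support}. Given both, the corollary reduces to the elementary arithmetic identity $m + [-m(n-1)+1, -m(n-2)] = [1 - m(n-2), -m(n-3)]$ together with a standard spectral-sequence bookkeeping argument, and no genuine obstacle arises at this stage.
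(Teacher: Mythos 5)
Your proposal mirrors the paper's own reasoning exactly: Theorem~\ref{main2} gives the convergence in the stated range of $i$, Lemma~\ref{proof-support} gives the support of the first page, and the total-degree arithmetic $m + [-m(n-1)+1, -m(n-2)] = [1-m(n-2), -m(n-3)]$ identifies the excluded values of $i$. One small point of care: the persistence of vanishing from the first page to $E^{m,s}_\infty$ is immediate because each $E^{m,s}_{r+1}$ is by construction a subquotient of $E^{m,s}_r$, not because the differentials shift $m+s$ by one; the degree-shift observation is true but is not what makes $E^{m,s}_1=0$ imply $E^{m,s}_\infty=0$.
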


Our proof of Theorem~\ref{main2}  
uses  ``spreading out'', in the sense of Grothendieck \cite[\S~10.4.11]{EGAIV},
which transfers us to the analogous problem  over 
the algebraic closure of a finite field.  In fact when $K$ is a finite field we give a precise description of the spectral sequence in Theorem~\ref{main}, replete with information about the Galois action through Tate twists. 

The key innovation in this paper is the introduction of a  geometric analogue of the circle method, which is inspired by the sorting of exponential 
sums according to ``major arcs'' and ``minor arcs'' that is found in  the usual Hardy--Littlewood circle method.   The treatment of the major arcs is entirely geometric, but guided by the kind of calculations that occur in the circle method. The treatment of the minor arcs, on the other hand,  reduces to a point counting problem over finite fields. This will be reinterpreted as a point counting problem over the function field $\FF_q(T)$, to which existing circle method techniques
developed by  Lee \cite{lee}  and  Browning--Vishe \cite{BV,BV'}
can be adapted. (A different approach to the circle method over function fields was developed by Pugin~\cite{pugin}).

\medskip

A sequence of spaces $Y_d$ is said to be 
``homologically stable'' if the $i$th cohomology of $Y_d$ is independent of $d$ for $d\gg i$. The precise meaning of this in our context is not immediately clear since there is no natural map 
$\Mor_{d,P}(\mathbb A^1,X)\to \Mor_{d+1,P}(\mathbb A^1,X)$. 
However, 
assuming that the differentials  in the spectral sequence of Theorem \ref{main2} are independent of $d$ for $d$ sufficiently large, it follows that  there exist isomorphisms 
$$H^{i+2d(n-k)}_c( \Mor_{d,P}(\mathbb A^1,X)_{\overline{K}},\Ql) \cong H^{i+2(d+1)(n-k)}_c( \Mor_{d+1,P}(\mathbb A^1,X)_{\overline{K}},\Ql)$$ for $d$ sufficiently large. Thus a form of homological stability applies to the sequence of spaces $\Mor_{d,P}(\mathbb A^1,X)$.

There have been two recent success stories where results in analytic number theory have been established by proving homological stability of appropriate moduli spaces.
In work of Ellenberg, Venkatesh and Westerland \cite{EVW}, 
a  homological stabilisation theorem is established  for the moduli space of branched covers of the complex projective line.  
For a given odd prime $l$ and a finite abelian $l$-group $A$, this is used to prove 
that (for sufficiently large $q\not \equiv 1\bmod{l}$) 
a positive proportion of quadratic extensions of $\FF_q(T)$ have the $l$-part of their class group isomorphic to $A$. This point of view has been taken even further by 
Ellenberg, Tran and Westerland \cite{ETW}, 
where a similar philosophy is used to 
confirm the  upper bound in  Malle's conjecture about the distribution of finite extensions of 
$\FF_q(T)$ with a specified Galois group. Theorem \ref{main2} and
Corollary ~\ref{cor:stable} go  
in the reverse direction, whereby a 
homological stabilisation theorem is proved using methods which herald from analytic number theory.

We expect that the spectral sequence in Theorem~\ref{main2} 
degenerates on the first page for sufficiently large $m+s$, leading  us to make the following 
conjecture.

\begin{conjecture}\label{con:degenerate}
Assume that $d\geq k-1\geq 2$
and $n >   2^{k}(k-1) $.  
The cohomology group 
$
H^{i+2d(n-k)}_c( \Mor_{d,P}(\mathbb A^1,X)_{\bar K},\Ql)$
is isomorphic  to
$$
\bigoplus_{m \geq 0} ( H^{i+m(n-1)  }_c (\Pconf_m,\Ql) \otimes H^{n-1}_c(X_{\bar K},\Ql)^{\otimes m} \otimes \sgn^{n-1} )^{S_m},
$$ 
for 
$
 i>
 -4\left(\left\lfloor \frac{d}{k-1}\right\rfloor \left(\frac{n}{2^{k}}-k+1\right)-1\right)$.
\end{conjecture}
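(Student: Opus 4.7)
The natural strategy is to strengthen Theorem~\ref{main2} by proving that all differentials on all pages of the spectral sequence vanish in the stated range, so that $E_\infty = E_1$ on the relevant region and the cohomology is a direct sum of the first-page entries rather than an iterated extension. Since the theorem already guarantees vanishing of $d_r^{m,s}$ for odd $r$, the task reduces to ruling out the even-page differentials. The plan is to do this via a weight (purity) argument in the sense of Deligne, using the more precise form of the result over finite fields recorded in Theorem~\ref{main}. One first reduces to $K = \mathbb F_q$ by spreading out, exactly as in the proof of Theorem~\ref{main2}, so that Frobenius acts and the Tate twists appearing in $E_1^{m,s}$ are explicit.

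The core of the plan is to show that for each $(m,s)$ with $0 \le m \le d$ in the support locus \eqref{eq:support}, the term $E_1^{m,s}$ is \emph{pure} of a Frobenius weight $w(m,s)$ that one can compute explicitly. The ingredients are standard: by Lehrer--Solomon, the cohomology $H^{s+mn}_c(\Pconf_m,\Ql)$ is pure of weight $2(s+mn)$ with $S_m$-isotypic decomposition indexed by the representation theory recalled in the caption of Figure~\ref{fig:spec}; the smooth affine hypersurface $X$ has $H^{n-1}_c(X_{\bar K},\Ql)$ with weights controlled by the primitive cohomology of $Z$ (pure of weight $n-1$) together with a known Tate contribution; and the exact Tate twist in the tensor product is supplied by Theorem~\ref{main}. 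Taking $S_m$-invariants and tensoring with $\sgn^{n-1}$ preserves purity, giving a single weight for each $E_1^{m,s}$. The differential $d_r^{m,s}$ has bidegree $(r, 1-r)$, so for even $r \ge 2$ the shift in $s$ is odd; if the weight formula $w(m,s)$ depends linearly on $s$ with an even coefficient (as the computation should confirm, since the $\Ql$-part of the weight comes from $H^{s+mn}_c(\Pconf_m)$ whose weight $2(s+mn)$ changes by an even amount), then source and target of each even-page differential are pure of different weights, and any map between them is forced to be zero.

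The main obstacle is to carry out this weight bookkeeping cleanly through the geometric circle method that produces the spectral sequence. The error arising from the ``minor arcs'' portion of the construction has only a weight \emph{bound}, not purity, so one must verify that within the range $i > -4\big(\lfloor d/(k-1)\rfloor (n/2^{k}-k+1)-1\big)$ the minor arc contribution is strictly below the major arc weight and therefore cannot participate in any nonzero differential — this is precisely why the same numerical hypothesis recurs in the conjecture. A secondary difficulty is the interaction between the sign twist $\sgn^{n-1}$, the $S_m$-invariants, and the Frobenius action: one must check that purity survives this averaging and that the Tate twist recorded in Theorem~\ref{main} is compatible with the Lehrer--Solomon weight for every irreducible $S_m$-component that actually appears. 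If both of these can be pushed through, the weight-mismatch argument forces every even-page differential to vanish in the stated range, yielding the direct-sum decomposition of the conjecture.
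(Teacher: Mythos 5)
The statement you are trying to prove is Conjecture~\ref{con:degenerate}: the paper does \emph{not} prove it. The authors only show (Theorem~\ref{t:top}, \S 6) that it would follow, for $K=\CC$, from a separate topological conjecture about the map to the double loop space, by matching dimensions via Lemma~\ref{homotopy-pconf}, Corollary~\ref{homotopy-algebraic} and Lemma~\ref{homotopy-topological}. So any purported unconditional proof should be checked very carefully, and yours has a concrete gap at its central step.

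The gap is the purity claim for $E_1^{m,s}$. The configuration-space factor is indeed pure ($H^i_c(\Pconf_m,\Ql)$ is of Tate type, pure of weight $2i-2m$), but $H^{n-1}_c(X_{\bar K},\Ql)$ is \emph{not} pure: by the excision sequence used in Lemma~\ref{supersmooth} (see also Lemma~\ref{monodromy-of-hypersurfaces}), it is an extension of the primitive cohomology of the projective closure $\overline{X}$ (pure of weight $n-1$) by a quotient of the middle cohomology of the divisor at infinity (pure of weight $n-2$). Hence $H^{n-1}_c(X)^{\otimes m}$ is mixed with weights filling the interval $[m(n-2),\,m(n-1)]$, and $E_1^{m,s}$ has a weight spread of width $m$. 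If you run the bookkeeping, the weight interval of the target $E_1^{m+r,s-r+1}$ of an even-page differential is the source interval shifted by roughly $r(n-4)+2$ at the bottom, so the two intervals overlap as soon as $m \geq r(n-4)+2$; strict compatibility with weight filtrations then gives you nothing. This is essentially the same obstruction the paper already meets: using only the support estimate \eqref{eq:support} it proves degeneration for $m+s > 1-2(n-2)(n-3)$, and a potentially nonzero differential requires $m \geq r(n-3)+2$ — your weight argument reproduces (a slightly weaker version of) this bound but cannot reach the full range of the conjecture, where $m$ may be as large as $d$. A secondary confusion: the minor arcs do not "participate in differentials" at all — the spectral sequence is built entirely from the filtration of the major arcs $A_d^{kd}$, and Proposition~\ref{minormain} is an outright vanishing statement identifying the abutment with $H^*_c(\Mor_{d,P})$ in the stated range; the difficulty lies wholly in the even-page differentials on the major-arc side, which remain open.
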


When the homology of a sequence of spaces stabilises, it is natural to find a single space that they all map to, whose homology is the limit of the homology of the underlying spaces. (When each space maps to the next space in the sequence, one can simply take the limit of the spaces.) In  \S~6 we shall investigate a potential space with this property for the sequence of $\Mor_{d,P}(\AA^1,X)$ over the complex numbers.  We shall demonstrate that the various
$\Mor_{d,P}(\mathbb A^1,X)$ naturally map to the space $Hom_{d,P}(\mathbb C,X)$  that parameterises certain continuous maps  from 
$\mathbb P^1(\mathbb C)$ to the smooth projective closure $\overline{X}$ of $X$ and is homotopic to the double loop space of $X$. We conjecture a relationship between the compactly supported cohomology of  $\Mor_{d,P}(\mathbb A^1,X)$ and the cohomology of this double loop space. In Theorem~\ref{t:top} we will show that 
Conjecture~\ref{con:degenerate} follows from this conjecture.

Although we have not been able to compute the full cohomology of the space of maps 
$\Mor_{d,P}(\mathbb A^1,X)_{\bar K}$,
we can nonetheless use Theorem~\ref{main2} 
to calculate $2(n-2) (n-3) -1$ cohomology groups, starting from the first non-zero cohomology group, without higher differentials. 
To see this, 
we claim that $E^{m,s}_\infty = E^{m,s}_1$ for $m+s >1 -2 (n-2)(n-3)$,
for  the spectral sequence $E^{m,s}_r$ in Theorem ~\ref{main2}. 
This follows if we are able to show that the differentials on the $r$th page of the spectral sequence vanish for $r\geq 1$ and $m+s > -2(n-2)(n-3)$.  Suppose 
for a contradiction that there is  some differential $d^{m,s}_r: E^{m,s}_r \to E^{m+r,s-r+1}_r$ that is non-zero. The last part of 
Theorem~\ref{main2}  allows us to assume that $r\geq 2$.  Then certainly $E^{m,s}_1$ and $E^{m+r,s-r+1}_1$ must be non-zero, which by
\eqref{eq:support}
can only occur when the interval $[  -m(n-1)+1 ,  -m (n-2) ]$  intersects 
the interval
$$[  -(m+r)(n-1) + r , -(m+r) (n-2) +r-1].
$$ 
This requires $m(n-1)-1 \geq (m+r)(n-2)-r+1$, which is equivalent to  $m \geq  r(n-3) + 2$. This in turn implies  that  $m \geq 2n-4$. But 
 \eqref{eq:support} implies that 
 $s\leq -m(n-2)$ if  $ E^{m,s}_1$ is non-vanishing, 
whence $$ m+ s \leq -nm+3m = - m(n-3) \leq -2 (n-2) (n-3),
$$ 
as required.

A straightforward outcome of Theorem~\ref{main2} is the following result (proved as Corollary~\ref{cor:dim2} below), which concerns the most basic geometric properties of 
$\mor_{d,P}(\AA^1,X)$. This should be viewed as an analogue of the main theorem of \cite{BV'} in a different context, with a different, more geometric proof. We expect that a proof of this theorem exists via the methods of \cite{BV'}, and vice versa.

\begin{cor}[Corollary \ref{cor:dim2}] \label{cor:dim}
Let $K$ be a field with
 $\chaar(K)>k$ if  $\chaar(K) \neq 0$. 
 Assume that $d\geq k-1\geq 2$ and that   \eqref{eq:n-range} holds.
 Then the
space $\mor_{d,P}(\AA^1,X)$ is irreducible and has the expected dimension $d(n-k)$.
\end{cor}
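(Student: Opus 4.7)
The plan is to extract the geometric information about $\mor_{d,P}(\AA^1,X)$ directly from the $\ell$-adic cohomology data supplied by Theorem~\ref{main2}. Fix any prime $\ell$ satisfying the hypotheses of that theorem; in positive characteristic $p$ one may always select some $\ell\ne p$ of even order modulo $p$. Since $\mor_{d,P}(\AA^1,X)$ sits inside $\AA^{dn}$ as the vanishing locus of the $dk$ coefficients of $f(g_1,\dots,g_n)$ viewed as a polynomial in the affine parameter, Krull's Hauptidealsatz implies that every irreducible component has dimension at least $d(n-k)$. For the reverse inequality, Corollary~\ref{cor:stable} (equivalently the implication \eqref{eq:pen}) gives
\[
H^i_c(\mor_{d,P}(\AA^1,X)_{\bar K},\Ql) = 0 \quad \text{for all } i > 2d(n-k),
\]
which forces $\dim \mor_{d,P}(\AA^1,X) \leq d(n-k)$. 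Combined, the dimension is exactly $d(n-k)$ and the scheme is pure of this dimension.

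Irreducibility will follow by counting top-dimensional components via top cohomology: for a pure-dimensional variety $Y$ of dimension $N$ over an algebraically closed field, $\dim_{\Ql} H^{2N}_c(Y,\Ql)$ equals the number of irreducible components of $Y_{\mathrm{red}}$. Hence it suffices to show that $H^{2d(n-k)}_c(\mor_{d,P}(\AA^1,X)_{\bar K},\Ql) \cong \Ql$, which is what the $i = 0$ anti-diagonal of the spectral sequence in Theorem~\ref{main2} targets. By \eqref{eq:support}, any non-zero $E_1^{m,s}$ with $m+s=0$ and $m\geq 1$ would require $-m = s \leq -m(n-2)$, forcing $n \leq 3$; but our hypotheses force $n \geq 2^k k \geq 24$. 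Thus the only surviving contribution on this diagonal is $E_1^{0,0} = \Ql$. The same support analysis rules out every differential entering or leaving $E_r^{0,0}$, since the possible targets $(r, 1-r)$ and sources $(-r, r-1)$ lie outside the support region. Therefore $E_\infty^{0,0} = \Ql$ and $H^{2d(n-k)}_c \cong \Ql$, so $\mor_{d,P}(\AA^1,X)_{\bar K}$ has a single irreducible component, whence $\mor_{d,P}(\AA^1,X)$ is irreducible over $K$.

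The main subtlety lies in a boundary issue of the convergence range. The spectral sequence is only declared to compute $H^{2d(n-k)}_c$ when $-4(\lfloor d/(k-1)\rfloor(n/2^k - k + 1) - 1) < 0$, i.e.\ when the bracketed quantity strictly exceeds $1$. Hypothesis~\eqref{eq:n-range} permits the extremal case of equality (for instance $d = k-1$ and $n = 2^k k$), and the component count in the irreducibility step then needs to be supplemented. I expect this is handled most naturally by a deformation argument: perturb $f$ into the strictly stable range, verify the conclusion in the perturbed family, and transfer back via flatness together with constructibility of compactly supported cohomology. Apart from this edge-case bookkeeping, the argument is a clean translation of the cohomological vanishing and top-cohomology calculation into geometric conclusions.
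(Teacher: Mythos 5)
Your argument is essentially the paper's own proof: the lower bound on every component from the $dk$ defining equations, the upper bound and purity from the vanishing of $H^{i+2d(n-k)}_c$ for $i>0$ (Corollary~\ref{cor:stable}), and irreducibility from the one-dimensionality of $E_1^{0,0}$ combined with the support constraints \eqref{eq:support}. Two remarks. First, you quote the fact that for a pure $N$-dimensional variety $\dim_{\Ql}H^{2N}_c$ counts irreducible components; the paper does not cite this but proves it (for the top-dimensional components of a possibly non-pure scheme) by excising the lower-dimensional and singular locus and applying Poincar\'e duality to the smooth part, so you should either supply that argument or a reference. Second, concerning the boundary case you flag: the paper performs no deformation, and its proof simply invokes the convergence statement of Theorem~\ref{main2} at $i=0$. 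You are correct that when \eqref{eq:n-range} holds with equality the stated convergence range $i>-4\left(\left\lfloor \frac{d}{k-1}\right\rfloor\left(\frac{n}{2^{k}}-k+1\right)-1\right)$ excludes $i=0$; in that case the excision sequence only gives a surjection from $H^{2d(n-k)}_c(\mor_{d,P}(\AA^1,X)_{\overline K},\Ql)$ onto the major-arc contribution, which bounds the component count from below rather than above. So this edge case is treated no more carefully in the paper than in your proposal, and your deformation sketch is additional material that would have to be carried out (or the hypothesis strengthened to strict inequality) to close it.
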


\begin{remark}
It is unreasonable to expect an analogue of  
Corollary~\ref{cor:dim} when $d<k-1$.
Let $X$ be the vanishing locus of the polynomial
$$
f=x_1^{k-1}x_2+x_2^{k}+\dots+x_n^k+1.
$$
Then the leading form defines a smooth hypersurface $Z\subset \PP^{n-1}$ of degree $k$, which contains the point
$P=(1:0:\dots:0)$. 
Consider the subscheme $M\subseteq
\mor_{d,P}(\AA^1,X)$, consisting of rational curves which lie in the hyperplane $x_2=0.$ If $Y\subset \AA^{n-2}$ denotes the smooth hypersurface 
$x_3^{k}+\dots+x_n^k+1=0$, then  $\mor_{d,(0:\dots:0)}(\AA^1,Y)$ 
is cut out by $k(d-1)+1$ equations in $d(n-2)$ variables. 
Hence it has  dimension at least  $d(n-2)-k(d-1)-1$. This implies that
$$
\dim M\geq d+d(n-2)-k(d-1)-1=d(n-k)-d+k-1.
$$
Thus, 
if $d<  k-1$, then the  dimension of $ M$ is greater than the expected dimension $d(n-k)$ of $ \mor_{d,P}(\AA^1,X)$.
\end{remark}

\subsection*{Acknowledgements}
During the preparation of this  paper the authors were
supported by the NSF under Grant No.\ DMS-1440140,  while  in residence at 
the {\em 
Mathematical Sciences Research Institute} in Berkeley, California,
during the Spring 2017 semester.  
Tim Browning was supported by EPSRC grant \texttt{EP/P026710/1} and 
ERC grant \texttt{306457}, while 
Will Sawin was supported by Dr. Max R\"{o}ssler, the Walter Haefner Foundation and the ETH Z\"urich
Foundation. We would like to thank the anonymous referee for helpful comments.

\section{Overview of the argument}

To begin with we proceed under the assumption that
$K=\FF_q$ is a finite field such that 
$\chaar(\FF_q)>k$.  At the end of the section we will deduce Theorem~\ref{main2} by using a spreading out argument. We take $\ell$ to be a prime which has even order in the multiplicative group modulo  
the characteristic of $\FF_q$. This is a technical hypothesis that could, of course, be removed if strong enough independence-of-$\ell$ results for \'{e}tale cohomology were known. 
Our argument will rely mainly  on foundational results in the theory of \'{e}tale cohomology from  \cite{sga4-3}. These comprise: 
\begin{itemize}
\item 
proper base change \cite[Expos\'e XIII, Prop.~5.2.8]{sga4-3}; 
\item
 Leray spectral sequence with compact supports \cite[Expos\'e XVII, Eq.~(5.1.8.2)]{sga4-3};

\item functoriality \cite[Expos\'e XVII, Variant 5.1.14]{sga4-3}; 

\item
excision \cite[Expos\'e XVII, Eq.~(5.1.16.2)]{sga4-3};

\item the projection formula \cite[Expos\'e XVII, Prop.~5.2.9]{sga4-3}; and 
\item
the K\"{u}nneth formula \cite[Expos\'e XVII, Thm.~5.4.3]{sga4-3}. 
\end{itemize}

From now on  $X\subset \AA^n$ is a smooth hypersurface defined by  $f\in \FF_q[x_1,\dots,x_n]$ with degree $k\geq 3$, whose 
 leading terms define a smooth hypersurface $Z\subset \PP^{n-1}$ over $\FF_q$.  For $
P=(x_1:\dots: x_n)\in Z(\FF_q),
$
our interest lies with the space  $\mor_{d,P}(\AA^1,X)$  of  $n$-tuples of polynomials $g_1,\dots,g_n\in \FF_q[T]$ of degree $d$ that satisfy  $f(g_1,\dots,g_n)=0$ and whose leading coefficients are exactly $(x_1,\dots,x_n)$.
The polynomials of interest to us take the shape
\begin{equation}\label{eq:def-gj}
g_j(T)=x_j T^d +   \sum_{i=0}^{d-1} a_{i,j} T^i,
\end{equation}
for $1\leq j\leq n$ and 
$\left(a_{0,j}, \dots,a_{d-1,j}\right)_{1\leq j\leq n}\in (\FF_q^{d})^n$.
 Let $e:\left(\mathbb A^d\right)^n  \times \mathbb A^{kd}\to \AA^1$ be the function that takes 
$$
\left(
\left(a_{0,j}, \dots,a_{d-1,j}\right)_{1\leq j\leq n}
,(b_1,\dots,b_{kd})\right)
$$
to the coefficient of $T^{-1}$ in \[ \left( \sum_{r=1}^{kd} b_r T^{-r} \right) f\left( x_1 T^d +   \sum_{i=0}^{d-1} a_{i,1} T^i, \dots, x_n T^d  +   \sum_{i=0}^{d-1} a_{i,n} T^i\right).\]

It will be instructive to recall how the function field version of the classical circle method  can be used to study 
$\mor_{d,P}(\AA^1,X)$. Let $\TT$ be the ring $\{\sum_{r\geq 1} b_r T^{-r}:
b_r\in \FF_q\}$ of formal power series in $1/T$, with constant term equal to zero.
Let  $\phi:\FF_q((1/T))\to \FF_q$ be the function
which takes an element of 
$\FF_q((1/T))$ to the coefficient of $T^{-1}$ and 
let  $\psi$ be
 a non-trivial additive character of $\mathbb F_{q}$. Then  $\psi\circ \phi$ is a non-trivial  (additive) character 
on the locally compact space $\FF_q((1/T))$. 
By combining properties of the Haar measure on $\TT$ with the orthogonality of characters, we have 
\begin{equation}\label{eq:swap}
\#\mor_{d,P}(\AA^1,X) 
=  \sum_{g_1,\dots,g_n}
\int_\TT \psi\circ\phi(\alpha f(g_1,\dots,g_n)) \d \alpha=\int_{\TT}  S(\alpha)
\d \alpha,
\end{equation}
where the sum is over polynomials $g_1,\dots,g_n$ of the shape \eqref{eq:def-gj} and 
$$
S(\alpha)=
\sum_{g_1,\dots,g_n}
 \psi\circ\phi(\alpha f(g_1,\dots,g_n)) .
$$
This observation is the igniting spark in the circle method.

When $\alpha$ is close to an element of $\FF_q(T)$ with small denominator
we expect $S(\alpha)$ to be large. The union of such points form the set of  ``major arcs" and ought to make the dominant contribution to 
$\#\mor_{d,P}(\AA^1,X)$. The ``minor arcs'' constitute everything else, which one would like  to show make a negligible contribution to this cardinality. Once achieved, the sort of information  about the geometry of $\mor_{d,P}(\AA^1,X)$ embodied in Corollary~\ref{cor:dim} can be deduced from the resulting asymptotic formula by comparing the count to what is predicted by the  Lang--Weil estimate \cite{LW}.

The polynomial \[f\left( x_1 T^d +   \sum_{i=0}^{d-1} a_{i,1} T^i, \dots, x_n T^d  +   \sum_{i=0}^{d-1} a_{i,n} T^i\right)\] is a degree $k$ polynomial composed with degree $d$ polynomials and hence has degree  at most $ kd$. 
In fact,   
since $(x_1:\dots:x_n)\in Z(\FF_q)$, 
it is clearly a polynomial of degree at most $kd-1$. Thus only the piece $\sum_{r=1}^{dk}b_r T^{-r}$ plays a role in the integration over $\alpha\in \TT$ in \eqref{eq:swap} and the exponential sum 
in the circle method
can be written
$$
S(\alpha)=\sum_{a_{i,j}} \psi\left( e(a_{i,j};b_1,\dots,b_{kd})\right).
$$
We require a geometric analogue of this approach.

Let $p_1 : \left(\mathbb A^d\right)^n  \times \mathbb A^{kd} \to \left(\mathbb A^d\right)^n $ and $p_2 : \left(\mathbb A^d\right)^n  \times \mathbb A^{kd} \to \mathbb A^{kd}$ be the natural projections.
Let $\mcL_\psi$  be the  Artin--Schreier sheaf on $\mathbb A^1$ associated to the character $\psi$ on $\FF_q$.
We will work with the complex 
 $$
 S_{d,f}=R p_{2!}  e^* \mcL_\psi
 $$
 on $\mathbb A^{kd}$. 
Our geometric version of \eqref{eq:swap} involves writing the desired compactly supported cohomology group as the compactly supported cohomology of $\mathbb A^{kd}$ with coefficients  in $S_{d,f}$.  
Just as the classical approach uses a 
special case of the inversion formula for Fourier series, the following geometric analogue is proved using  a special case of the inversion formula for the $\ell$-adic Fourier transform.

\begin{lemma}\label{Fourier} 
We have 
$H^i_c (\Mor_{d,P}(\mathbb A^1, X)
_{\overline \FF_q} 
, \Ql )= H^{i+2kd}_c \left( \mathbb A^{kd}, S_{d,f}(kd)\right)$ for any $i\in \ZZ$. 
\end{lemma}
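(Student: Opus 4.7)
The plan is to compute $R\Gamma_c((\AA^d)^n \times \AA^{kd}, e^*\mcL_\psi)$ in two different ways, corresponding to the two projections $p_1$ and $p_2$. The $p_2$-computation recovers $R\Gamma_c(\AA^{kd}, S_{d,f})$ directly from the definition of $S_{d,f}$, so the main task is to identify the $p_1$-computation with $R\Gamma_c(\Mor_{d,P}(\AA^1, X), \Ql)$ up to a shift and Tate twist. The starting observation is that $e(a, b)$ is linear in the $b$-variables: since $(x_1:\cdots:x_n) \in Z(\FF_q)$, the polynomial $f(g_1, \ldots, g_n)$ has degree at most $kd - 1$ in $T$, so one may write $e(a, b) = \sum_{r=1}^{kd} b_r c_r(a) = \langle b, c(a)\rangle$ for a coefficient morphism $c : (\AA^d)^n \to \AA^{kd}$ satisfying $\Mor_{d,P}(\AA^1, X) = c^{-1}(0)$.

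Next, I would compute the stalks of $Rp_{1!} e^*\mcL_\psi$ via proper base change. At a point $a \in \Mor_{d,P}(\AA^1, X)$, the functional $e(a, \cdot)$ is identically zero, so the stalk is $R\Gamma_c(\AA^{kd}, \Ql) = \Ql(-kd)[-2kd]$. At $a \notin \Mor_{d,P}(\AA^1, X)$, the functional $e(a, \cdot) : \AA^{kd} \to \AA^1$ is a non-trivial linear surjection; choosing a splitting $\AA^{kd} \cong \AA^1 \times \AA^{kd-1}$ and invoking K\"unneth reduces the stalk to $R\Gamma_c(\AA^1, \mcL_\psi) \otimes R\Gamma_c(\AA^{kd-1}, \Ql)$, which vanishes by the standard vanishing of compactly supported cohomology of a non-trivial Artin--Schreier sheaf on $\AA^1$.

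To lift this stalkwise information to a sheaf identification, I would apply the open-closed excision distinguished triangle for the closed immersion $j : \Mor_{d,P}(\AA^1, X) \hookrightarrow (\AA^d)^n$: the vanishing of the stalks on the open complement forces $Rp_{1!} e^*\mcL_\psi \simeq j_! j^* Rp_{1!} e^*\mcL_\psi$, and a second application of proper base change along the Cartesian square over $j$, together with the fact that $e$ vanishes identically on $\Mor_{d,P}(\AA^1, X) \times \AA^{kd}$, identifies $j^* Rp_{1!} e^*\mcL_\psi$ with the constant complex $\Ql(-kd)[-2kd]$. Taking global sections then yields $R\Gamma_c((\AA^d)^n \times \AA^{kd}, e^*\mcL_\psi) \simeq R\Gamma_c(\Mor_{d,P}(\AA^1, X), \Ql)(-kd)[-2kd]$, and comparing with the $p_2$-computation while absorbing the $(-kd)$ twist into $S_{d,f}(kd)$ produces the desired identity after a degree shift of $2kd$.

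The main obstacle is the passage from stalk-level vanishing to the sheaf-level identification; the two applications of proper base change together with the excision triangle handle it cleanly, but one must verify carefully that each applies in the form invoked, and keep track of the Tate twist $(-kd)$ that is ultimately absorbed into $S_{d,f}(kd)$ on the other side.
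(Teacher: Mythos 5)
Your proposal is correct and is essentially the paper's argument: both compute $R\Gamma_c$ of the total space $(\AA^d)^n\times\AA^{kd}$ in two ways, with the $p_2$-direction giving $S_{d,f}$ by definition and the other direction resting on the same key computation (via proper base change and the K\"unneth formula, $H^*_c(\AA^{kd},\mcL_\psi(\langle b, c(a)\rangle))$ vanishes when $c(a)\neq 0$ and equals $\Ql(-kd)[-2kd]$ when $c(a)=0$). The only difference is packaging: the paper factors $e=\mu\circ(c\times\mathrm{id})$ and phrases the computation as the $\ell$-adic Fourier transform of the constant sheaf being a skyscraper at $0$, whereas you compute $Rp_{1!}e^*\mcL_\psi$ stalkwise on $(\AA^d)^n$ directly and conclude with the excision triangle, which amounts to the same thing.
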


Before turning to the proof of this result, we note that  Lemma~\ref{Fourier} can be used to recover the expression \eqref{eq:swap} by taking trace functions of both sides and appealing to the Grothendieck--Lefschetz trace formula to interpret the left hand side in terms of the cardinality of  
$\mathbb F_q$-points of $\Mor_{d,P}(\mathbb A_1,X)$ and 
the right hand side in terms of exponential sums over $\FF_q$-points of
$\mathbb A^{kd}$.

In Definition~\ref{def:major}
we shall define a certain Zariski closed subset $A_{d}^{kd}$ of $\AA^{kd}$, 
which plays the role of the major arcs in our geometric setting. 
The geometric analogue of the estimation of the integral over the major arcs should be a computation of the cohomology of $A_d^{kd}$ with coefficients in the complex $S_{d,f}$. Unfortunately, we are only able to  compute the cohomology using a spectral sequence and evaluate the first page.  We are not able to rule out the existence of higher differentials causing cancellation in the cohomology, a phenomenon which is invisible on the trace function side. Nonetheless, we are still able to prove strong upper bounds on the dimension of the cohomology of $A_{d}^{kd}$.  The geometric analogue of the bound on the minor arcs $\AA^{kd}-A_d^{kd}$ will be a cohomology vanishing statement. 

 \begin{proof}[Proof of Lemma~\ref{Fourier}]
Let $c_1,\dots,c_{kd}$ be the unique polynomials in the $a_{i,j}$ such that 
  \[f\left( x_1 T^d +   \sum_{i=0}^{d-1} a_{i,1} T^i, \dots, x_n T^d  +   \sum_{i=0}^{d-1} a_{i,n} T^i\right)= \sum_{r=1}^{kd} c_r T^{r-1}.
  \]
  Then  $e(a_{i,j};b_1,\dots,b_{kd})=
 \sum_{r=1}^{kd} b_r c_r$. 
Let $\mu: \mathbb A^{kd} \times \mathbb A^{kd} \to \mathbb A^1$ be the map 
$$
\left((b_1,\dots,b_{kd}), (c_1,\dots,c_{kd})\right)\mapsto \sum_{r=1}^{kd} b_r c_r.
$$
Then $e = \mu \circ (c \times id)$.
Furthermore, $\Mor_{d,P}(\mathbb A^1,X)
_{\overline \FF_q} 
$ is the fiber of $c$ 
over the point $0$, since we can view $(\mathbb A^d)^n$ as the space of $n$-tuples of degree $d$ polynomials $(g_1,\dots,g_n)$, with 
$g_j = x_jT^d+ \sum_{i=0}^{d-1} a_{i,j} T^j$,  
whose leading coefficients are exactly $(x_1,\dots,x_n)$. Moreover, in  that space the condition $f(g_1,\dots,g_n)=0$ is precisely the condition that the image of the point under $c$ is zero.
 
By the Leray spectral sequence with compact supports, we have   
\begin{align*}
H^{i+2kd}_c \left( \mathbb A^{kd}, S_{d,f}(kd)\right)
=~&  H^{i+2kd}_c \left( \mathbb A^{kd}, R p_{2!}  e^* \mcL_\psi(kd)\right)\\
=~& H^{i+2kd}_c\left( \left(\mathbb A^d\right)^n  \times \mathbb A^{kd} , e^* \mcL_\psi(kd)\right) \\
=~& H^{i+2kd}_c\left( \left(\mathbb A^d\right)^n  \times \mathbb A^{kd} , (c \times id)^* \mu^* \mcL_\psi(kd)\right)\\
=~& H^{i+2kd}_c \left( \mathbb A^{kd} \times \mathbb A^{kd}, R(c \times id)_! \Ql \otimes \mu^* \mcL_\psi(kd) \right).
\end{align*}
The desired statement follows from this identity and a special case of \cite[Lemma 13]{betti}. Katz's proof is a sketch and so we give more detail here for the sake of  completeness.

If we let $p_1': \mathbb A^{kd} \times \mathbb A^{kd} \to \mathbb A^{kd}$ be the first projection, then by proper base change we have
\begin{align*}
H^{i+2kd}_c &\left( \mathbb A^{kd} \times \mathbb A^{kd}, p_1'^* Rc_! \Ql \otimes \mu^* \mcL_\psi (kd) \right)\\
&= H^{i+2kd}_c \left( \mathbb A^{kd}, Rc_! \Ql \otimes Rp_{1!}' \mu^* \mcL_\psi (kd)\right). \end{align*}
Now $Rp_{1!}' \mu^* \mcL_\psi (kd)$ is the $\ell$-adic Fourier transform of the constant sheaf, which is a skyscraper sheaf supported at $0$ and placed in degree $2kd$. This can be checked explicitly using proper base change, which shows that its stalk at any non-zero point is the compactly supported cohomology of $\mathbb A^{kd}$ with coefficients in $\mathcal L_\psi$ of a non-constant linear map. This vanishes by the K\"{u}nneth formula, since the cohomology of $\mathbb A^1$ with coefficients in $\mathcal L_\psi$ of a non-constant linear map already vanishes. Hence  $Rp_{1!}' \mu^* \mcL_\psi (kd)$ is a skyscraper sheaf supported at $0$. Proper base change furthermore implies that the stalk at $0$ is the compactly supported cohomology of $\mathbb A^{kd}$ with coefficients $\Ql(kd)$, which is a copy of $\Ql$ in degree $2kd$.

Thus the tensor product of $Rp_{1!}' \mu^* \mcL_\psi (kd)$ with $Rc_! \Ql$ is simply the stalk of $Rc_! \Ql$ at $0$, placed at $0$ and shifted $2kd$ degrees. Hence the compactly supported cohomology in degree $i+2kd$ is simply the stalk of $R^{i}c_! \Ql$ at $0$, which by proper base change is the $i$th compactly supported cohomology of the fiber $\Mor_{d,P}(\mathbb A^1,X)
_{\overline \FF_q} 
$ of $c$ over $0$.
\end{proof}

 \begin{remark} 
 The stalk of   $S_{d,f}$ at $0$ in $\mathbb A^{kd}$ is simply the compactly 
supported cohomology of $\mathbb A^{nd}$  with coefficients in the constant sheaf $\QQ_\ell$, which vanishes outside degree $2nd$ and is one-dimensional in degree $2nd$. If the stalks of $S_{d,f}$ vanished everywhere else, then $H^{i+2kd}_c \left( \mathbb A^{kd}, S_{d,f}(kd)\right)$ would vanish for $i\neq 2d(n-k)$ and would be one-dimensional for $i=2d(n-k)$. In fact $S_{d,f}$ does not usually vanish everywhere else, but we will see later (under suitable hypotheses) that the contributions of the other points to the cohomology are in lesser degree than $2nd$, so the top degree cohomology group occurs in $2d(n-k)$. This fact is sufficient to verify that $\Mor_{d,P}(\mathbb A^1,X)$ is a variety of dimension $d(n-k)$, as in the proof of Corollary~\ref{cor:dim}. 
  \end{remark}

 It is now time to introduce the ``major arcs of level $m$'' for our geometric version of the circle method.  
 
 \begin{definition}[Major arcs]\label{def:major}
 For any integer $m\geq 0$, 
let $A_m^{kd}$ be the locus in $\mathbb A^{kd} $ consisting of points $(b_1,\dots,b_{kd})$ where the $(kd-m) \times (m+1)$ matrix $M$, whose entries are given by the formula $M_{ij}= b_{i+j-1}$, has rank at most $m$.  Take $A_{-1}^{kd} $ to be the empty set.
 \end{definition}

 We will view rational functions in $T$ as power series in $T^{-1}$. We say a power series in $T^{-1}$ is $O(T^{N})$ if it only has terms of degree at most $N$.
 The following result gives an explicit description of the major arcs of level $m$.
 
 \begin{lemma}\label{majorarcproperties}
 The set $A_{m}^{kd}$ satisfies the following properties.
  \begin{enumerate}
 
 \item $A_m^{kd}$ is a Zariski closed subset of $\mathbb A^{kd}$.
 
 \item $A_{m-1}^{kd} \subseteq A_m^{kd}$.
 
 \item $A_{m}^{kd} = \mathbb A^{kd}$ if $m\geq kd/2$.
 
 \item A tuple $(b_1,\dots,b_{kd})$ is in $ A_m^{kd}$ if and only if there exists $m'\leq m$, a polynomial $h_1(T)$ of degree $<m'$, and a monic polynomial $h_2(T)$ of degree $m'$ such that \[ \sum_{r =1}^{kd} b_r T^{-r}  = \frac{h_1(T)}{h_2(T)} + O (T^{-kd-1 + m-m' }).\]
 
 \item Assume that  $m \leq kd/2$. Then for each $(b_1,\dots,b_{kd}) \in A_m^{kd} - A_{m-1}^{kd}$, there exists a unique $m',h_1,h_2$ satisfying the conditions of part (4). Furthermore, for such $m',h_1,h_2$, the polynomials  $h_1, h_2$ are coprime and, if $m'<m$, then the coefficient of $T^{-kd-1+m-m'}$ in \[ \sum_{r =1}^{kd} b_r T^{-r}  - \frac{h_1(T)}{h_2(T)} \] is non-zero.
 \end{enumerate}
 
\end{lemma}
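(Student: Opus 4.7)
My proof plan handles the five parts in a slightly rearranged order. Parts (1) and (3) are essentially immediate from the definition: for (1), the rank $\leq m$ condition is cut out by the simultaneous vanishing of all $(m+1)\times(m+1)$ minors of the Hankel matrix, while for (3), when $m \geq kd/2$ the matrix has $kd - m \leq m$ rows, so its rank cannot exceed $m$.

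The heart of the argument is (4), the classical correspondence between Hankel matrix rank and rational approximation, adapted to the truncated setting. Given $(b_1,\dots,b_{kd}) \in A_m^{kd}$, a rank $\leq m$ condition on the $(kd-m) \times (m+1)$ matrix forces a non-trivial linear relation $\sum_{j=0}^{m} c_j b_{i+j} = 0$ for $1 \leq i \leq kd - m$. Letting $m'$ be the largest index with $c_{m'} \neq 0$, normalising $c_{m'} = 1$, and setting $h_2(T) = \sum_{j=0}^{m'} c_j T^j$, a direct expansion shows the coefficient of $T^{-s}$ in $h_2(T) \cdot \sum_{r=1}^{kd} b_r T^{-r}$ vanishes for $1 \leq s \leq kd - m$ (using $m' \leq m$). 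Isolating the polynomial part $h_1$ and dividing by $h_2$ yields $\sum b_r T^{-r} = h_1/h_2 + O(T^{-kd-1+m-m'})$, with $\deg h_1 < m'$ forced by $\sum b_r T^{-r} = O(T^{-1})$. The converse is proved by reversing the computation: multiplying the approximation by $h_2$, using that $h_1$ has no negative powers of $T$, and comparing coefficients of $T^{-s}$ for $1 \leq s \leq kd - m$ recovers the column dependence relation and hence the rank bound.

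Part (2) then drops out of (4): any triple $(m', h_1, h_2)$ valid for $A_{m-1}^{kd}$ satisfies $m' \leq m-1 \leq m$ with an error term that is \emph{a fortiori} $O(T^{-kd-1+m-m'})$, placing the point in $A_m^{kd}$. For (5), I would first prove coprimality. If $\gcd(h_1, h_2) = d$ had degree $e \geq 1$, then writing $h_i = d \cdot \tilde h_i$ produces a valid triple $(m'-e, \tilde h_1, \tilde h_2)$ for condition (4) with $m$ replaced by $m - e$ (the error $O(T^{-kd-1+m-m'})$ coincides with $O(T^{-kd-1+(m-e)-(m'-e)})$), which by (4) places $(b_1,\dots,b_{kd})$ in $A_{m-e}^{kd} \subseteq A_{m-1}^{kd}$, contradicting the hypothesis. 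Uniqueness follows next: for two valid triples $(m'_i, h_{1,i}, h_{2,i})$ with $m'_1 \leq m'_2$, the difference $h_{1,1}/h_{2,1} - h_{1,2}/h_{2,2}$ is $O(T^{-kd-1+m-m'_1})$; clearing denominators yields a polynomial of degree $< m'_1 + m'_2 \leq 2m \leq kd$ which is forced by the bound $O(T^{-kd-1+m+m'_2})$ (using $m'_2 \leq m \leq kd/2$) to vanish, so $h_{1,1}/h_{2,1} = h_{1,2}/h_{2,2}$ as rational functions, and coprimality plus monicity of the denominators identifies the triples. Finally, for the non-vanishing statement, if the coefficient of $T^{-kd-1+m-m'}$ were zero, the error would improve to $O(T^{-kd-1+(m-1)-m'})$, and (4) applied with $m-1$ in place of $m$ would give $(b_1,\dots,b_{kd}) \in A_{m-1}^{kd}$, a contradiction.

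The main obstacle will be the bookkeeping in (4): verifying precisely which coefficients of $h_2(T) \sum_{r=1}^{kd} b_r T^{-r}$ vanish given the truncation at degree $kd$, and tracking how the two parameters $m$ (from $A_m^{kd}$) and $m'$ (the degree of $h_2$) interact to produce exactly the error exponent $-kd-1+m-m'$. The corresponding classical statement for infinite power series is cleaner; here one must keep careful track of the range $1 \leq s \leq kd-m$ where the relation forces vanishing, and verify that the bound $m \leq kd/2$ in (5) is exactly what is needed to force the relevant polynomial to vanish identically.
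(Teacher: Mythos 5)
Your proposal is correct and follows essentially the same route as the paper's proof: the kernel-vector/rational-approximation dictionary for part (4), deducing (2) from (4), and the descent arguments (removing a common factor, improving the error term, and the degree comparison after clearing denominators) for (5). No issues.
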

 
  \begin{proof}
We begin by dealing with parts (1)--(3).
Part (1) follows from the definition and the fact that the set of matrices of rank $\leq m$ is Zariski closed.
Part (2) will follow from part (4), and part (3) follows on noting that in this
 case, $M$ has $kd-m \leq m$ rows and so its rank is necessarily $\leq m$.
 
To deal with part (4)
we note that the matrix $M$ has rank $\leq m$ if and only if there is an element in its kernel. 
Suppose that an element $(c_1,\dots,c_{m+1})$ is in the kernel of $M$. 
Then for all $j\in \{1,\dots,kd-m\}$ we have 
$$\sum_{i=1}^m c_i b_{i+j-1}=0.
$$ 
In other words, the coefficient of $T^{-j}$ in $( \sum_{r=1}^{kd} b_r T^{-r} ) ( \sum_{i=1}^{m+1} c_i T^{i-1} )$ vanishes for all 
 $j\in \{1,\dots,kd-m\}$.
Let 
$$h_2(T) =  \sum_{i=1}^{m+1} c_i T^{i-1}.
$$   
and let $h_1(T)$ consist of  terms of $( \sum_{r=1}^{kd} b_r T^{-r} )h_2(T)$  of non-negative degree. 
Let $m'\leq m$ be the degree of $h_2$. Then 
$( \sum_{r=1}^{kd} b_r T^{-r} ) h_2(T)$ is a product of a Laurent series with all terms in negative degrees with a polynomial of degree $m'$. Thus it is a Laurent series with all terms in degree $<m'$. Because of the aforementioned vanishing, we have \[ \left( \sum_{r=1}^{kd} b_r T^{-r} \right) h_2(T) = h_1(T) + O(T^{-(kd-m)-1})\] and dividing both sides by $h_2(T)$ we get the desired identity.
 The converse can be proved by the same argument in reverse. Given  $h_1$, $h_2$, one multiplies the identity by $h_2(T)$, observes the vanishing of the coefficients of \[\left( \sum_{r=1}^{kd} b_r T^{-r} \right) h_2(T)\] and concludes that a vector defined by the coefficients of $h_2(T)$ lies in the kernel of $M$.

To prove part (5), we note that if $h_1$ and $h_2$ are not relatively prime, we may remove a common factor from them, decrease $m'$ by the degree of that factor, and then decrease $m$ by the same amount, to show that the point lies in $A_{m-1}^{kd}$.  Similarly if the coefficient of $T^{-kd-1+m-m'}$ in $\sum_{r =1}^{kd} b_r T^{-r}  - \frac{h_1(T)}{h_2(T)}$ is zero, then   
\[ \sum_{r =1}^{kd} b_r T^{-r}  = \frac{h_1(T)}{h_2(T)} + O (T^{-kd-2 + m-m' }).\] 
Thus we  may decrease $m$ by one and leave $m'$ fixed, again showing that the point lies in $A_{m-1}^{kd}$.
 
 It remains to show that there cannot be two distinct solutions. Let $h_1/h_2$ and $h_3/h_4$ be two distinct,  coprime solutions, with  $\deg(h_2)=m'$ and $\deg(h_4)=m''$. 
 Without loss of generality, $m' \leq m''$. Then 
 \[
 \frac{h_1(T)}{h_2(T)} - \frac{h_3(T)}{h_4(T)} 
 = O (T^{-kd-1 + m-m' }).\]
Since 
\[ \frac{h_1(T)}{h_2(T)} - \frac{h_3(T)}{h_4(T)}= \frac{h_1(T) h_4(T) - h_2(T) h_3(T) }{ h_2(T) h_4(T) }\] is non-zero, its numerator is non-zero and must have non-negative degree. Moreover,   the denominator has degree $m'+m''$ and 
so this power series has leading term in degree at least $-m'-m''$. 
It follows that  \[-m'-m'' \leq -kd-1 +m-m'. \] 
This implies that  $kd+1 \leq m + m'' \leq 2m,$
 which  contradicts the assumption that $m \leq kd/2$.
 \end{proof}
  
Ultimately, 
our analogue of the major arcs will be $A_d^{kd}$ and the analogue of the minor arcs will be its complement. We could take any cutoff $m$ and view $A_{m}^{kd}$ as the major arcs, but we will see in the next section that $m=d$ is the largest value for which the argument goes through. We shall prove the following result, which is  the geometric analogue of the estimation of the major arcs.

\begin{prop}\label{majormain} Assume that the leading terms of $f$ define a smooth hypersurface and that $\chaar(\mathbb F_{q})>k$. The spectral sequence associated to the decreasing filtration of the complex of $\ell$-adic Galois representations $H^{*}_c(A_d^{kd}, S_{d,f})$,  whose $m$th step is equal to $H^*_c(A_d^{kd} - A_{m-1} ^{kd},S_{d,f})$ for $m\in \{0,\dots,d+1\}$, has the following properties.

\begin{enumerate}
\item It converges to $H^{*}_c(A_d^{kd}, S_{d,f})$.

\item Its first page $E^{m,s} _1$ is 
$$\left( H^{s+mn -2nd }_c(\Pconf_m,\Ql) \otimes H^{n-1}_c(X
_{\overline \FF_q} 
,\Ql)^{\otimes m}  \otimes \sgn^{n-1} \right)^{S_m} (-m-n(d-m)), $$ 
whenever $0\leq m\leq d$, with  
with $E_1^{m,s}=0$ otherwise. 
\item Its differential $d_r$ has bidegree $(r,1-r)$ and it 
vanishes on every odd page. 

\end{enumerate}\end{prop}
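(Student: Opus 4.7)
My plan for part (1) is to invoke the standard spectral sequence associated to a closed filtration: the chain $\emptyset \subset A_0^{kd} \subset \cdots \subset A_d^{kd}$ gives, via the excision long exact sequences for the open-closed decompositions $A_d^{kd} - A_{m-1}^{kd} = (A_m^{kd} - A_{m-1}^{kd}) \sqcup (A_d^{kd} - A_m^{kd})$, an exact couple whose associated spectral sequence has $E_1^{m,s} = H^{m+s}_c(A_m^{kd} - A_{m-1}^{kd}, S_{d,f})$, converges to $H^{*}_c(A_d^{kd}, S_{d,f})$, and carries a differential of the stated bidegree $(r, 1-r)$. Part (3) will then follow from part (2) by a Frobenius purity argument: once each $E_1^{m,s}$ is identified with the displayed tensor product, the purity of $H^{*}_c(\Pconf_m, \Ql)$ (Arnol'd) and of $H^{n-1}_c(X, \Ql)$ combined with the Tate twist $(-m - n(d-m))$ forces $E_1^{m,s}$ to be pure of a specific Frobenius weight $w(m,s)$; a direct parity calculation then shows that $w(m+r, s-r+1) - w(m,s) \equiv r \pmod{2}$, so $d_r$ must vanish whenever $r$ is odd in order to preserve weights.

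The main task, part (2), is to compute $H^{m+s}_c(A_m^{kd} - A_{m-1}^{kd}, S_{d,f})$ for each $m \in \{0, \ldots, d\}$. I shall stratify $A_m^{kd} - A_{m-1}^{kd}$ further, using Lemma \ref{majorarcproperties}(5), by the integer $m' \leq m$ (the degree of the best rational approximation $h_1/h_2$) and by whether $h_2$ is square-free. The key reduction is that only the open stratum $U_m \subset A_m^{kd} - A_{m-1}^{kd}$ where $m' = m$ and $h_2$ is square-free contributes to the cohomology. On the substrata where $m' < m$, the function $e$ becomes an affine-linear, non-constant function of the ``free'' coordinates $b_{kd-m+m'+1}, \ldots, b_{kd}$ (since the tail $\beta f(g)$, which is $O(T^{-2 + m - m'})$ as a series in $T^{-1}$, contributes a nonzero $T^{-1}$-coefficient depending linearly on these $b_r$), so Artin--Schreier vanishing (the pushforward of $\mcL_\psi$ along a non-constant affine-linear map $\AA^1 \to \AA^1$ is zero) forces these substrata to contribute nothing. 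The substrata where $h_2$ has a multiple root are of strictly smaller dimension, and a similar Fourier-type vanishing argument rules out their contribution in the relevant cohomological range. Partial fractions then identify $U_m$ with the quotient $(\Pconf_m \times \mathbb{G}_m^m)/S_m$ under the diagonal $S_m$-action permuting the pairs $(r_i, v_i)$, where $h_2 = \prod(T - r_i)$ and $h_1/h_2 = \sum v_i/(T - r_i)$.

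On $U_m$ I compute $S_{d,f}$ by a residue calculation: writing $\alpha = h_1/h_2 + \beta$ with $\beta = O(T^{-kd-1})$ (forced by $m' = m$) and noting that $f(g_1(T), \ldots, g_n(T))$ has $T$-degree at most $kd - 1$, so $\beta f(g) = O(T^{-2})$, the coefficient of $T^{-1}$ in $\alpha \cdot f(g)$ is
\[
e = -\sum_{i=1}^m v_i \, f\bigl(g_1(r_i), \ldots, g_n(r_i)\bigr).
\]
Because the $r_i$ are distinct and $m \leq d$, the evaluation map $(a_{i,j})_{i<d,\, j\leq n} \mapsto (g_j(r_i))_{i \leq m,\, j \leq n}$ is surjective (a Vandermonde) with affine linear fibers of dimension $n(d-m)$, so the K\"unneth formula and proper base change split the pushforward of $\tilde e^* \mcL_\psi$ to $U_m$ into
\[
\bigl(H^{*}_c(\Pconf_m, \Ql) \otimes K^{\otimes m}\bigr)^{S_m} \otimes H^{*}_c(\AA^{n(d-m)}, \Ql),
\]
where $K = H^{*}_c(\mathbb{G}_m \times \AA^n, \mcL_\psi(-v f(y)))$ is a single local factor (one copy per root of $h_2$) and the $S_m$-invariants come from the covering $\Pconf_m \times \mathbb{G}_m^m \to U_m$. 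The chief obstacle is the identification of $K$ with $H^{n-1}_c(X_{\bar\FF_q}, \Ql)$ concentrated in a single cohomological degree and with an appropriate Tate twist. This identification proceeds via the Fourier--Deligne transform of $Rf_! \Ql$ along the $v$-line: under the smoothness of $Z \subset \PP^{n-1}$ (the leading form of $f$) together with the hypothesis $\chaar(\FF_q) > k$, one shows that $R\pi_! \mcL_\psi(v f(y))$ on $\mathbb{G}_m$ is a shifted lisse sheaf whose contribution away from $v = 0$ is canonically $H^{n-1}_c(X, \Ql)$ with Tate twist $\Ql(-1)$. The sign representation $\sgn^{n-1}$ enters when descending to $S_m$-invariants: by the Koszul rule, permutation of the $m$ tensor factors of $H^{n-1}_c(X)$ picks up the sign $\sgn(\sigma)^{n-1}$, which is rewritten as (unsigned) $S_m$-invariants of the tensor product tensored with $\sgn^{n-1}$. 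The total Tate twist $(-m - n(d-m))$ is the sum of the $-1$ from each of the $m$ local factors $K$ and the $-n(d-m)$ from $H^{*}_c(\AA^{n(d-m)}, \Ql) = \Ql(-n(d-m))[-2n(d-m)]$; tracking the degree shifts yields the displayed formula for $E_1^{m,s}$.
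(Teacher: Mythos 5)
Your outline of parts (1) and (2) tracks the paper's route fairly closely: the filtration spectral sequence, the reduction to the locus where the best rational approximation has full degree $m$, the elimination of non-squarefree $h_2$, the Chinese-remainder/partial-fractions factorisation into a tensor product of local factors, and the identification of the local factor $K=H^*_c(\mathbb G_m\times\mathbb A^n,\mcL_\psi(vf))$ with $H^{n-1}_c(X,\Ql)$ in a single degree via an Artin--Schreier/Fourier argument all appear in the paper's Lemmas~\ref{infinityvanishing}--\ref{supersmooth}. Two caveats. First, the paper's vanishing on the $m'<m$ stratum is proved at the stalk level by a $\mathbb G_a$-translation in the $a$-variables (Lemma~\ref{infinityvanishing}), not by pushing $\mcL_\psi$ along the tail $b$-coordinates; your version has a hidden gap, since the linear coefficient you claim is ``nonzero'' is in fact a polynomial in $a$ that can vanish on a nontrivial subvariety, so you would have to iterate or switch to the $a$-side translation anyway. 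Second, the coefficient of $T^{-1}$ in $\frac{v}{T-r}f(g(T))$ is $+v\,f(g(r))$, not $-v\,f(g(r))$, though this sign does not affect the conclusion.

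Part (3) is a genuine gap, and a serious one. Your argument rests on $E_1^{m,s}$ being pure of a single Frobenius weight, but $H^{n-1}_c(X_{\bar\FF_q},\Ql)$ is \emph{not} pure: as the paper records in Lemma~\ref{monodromy-of-hypersurfaces}(2), it is an extension of the pullback of $\operatorname{Prim}_{n,k}$ (weight $n-1$) by the pullback of $\operatorname{Prim}_{n-1,k}$ (weight $n-2$), and these two weights have \emph{opposite parities}. Consequently $(H^{n-1}_c(X)^{\otimes m})^{S_m}$ has weights filling an interval of length $m$, and no parity constraint on $d_r$ results. Indeed, had you carried out the "direct parity calculation" assuming purity of weight $n-1$, you would have found that the weights of $E_1^{m,s}$ and $E_1^{m+r,s-r+1}$ differ by $r(n-3)+2\neq 0$ for every $r\geq 1$ and $n\geq 3$, which would force \emph{all} differentials to vanish and the spectral sequence to degenerate at $E_1$ --- a conclusion the authors explicitly do not claim (it is their Conjecture~\ref{con:degenerate}), so something must be off. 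The paper's actual proof of part (3) occupies the separate Section~\ref{s:diff} and uses a quite different mechanism: one spreads out over the universal moduli space $\mathcal M_{n,k}$ of pairs $(f,P)$, shows the spectral sequence is one of lisse sheaves there, exhibits (via Goursat and the Katz--Sarnak big-monodromy theorem) an element of the geometric monodromy group acting by the scalar $-1$ on $R^{n-1}pr_{\mathcal M,X!}\Ql$ and hence by $(-1)^m$ on the $m$th graded piece, and concludes that differentials raising $m$ by an odd amount cannot commute with this element unless they vanish. This monodromy input is essential and cannot be replaced by a purity count over a single finite field.
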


The proof of this result will be entirely geometric, but it will follow the line of reasoning that features in the treatment of the major arcs in the usual  circle method.
The vanishing of the differentials in part (3) will be established in 
\S~\ref{s:diff}, but everything else will be achieved in 
 \S~\ref{s:major}.

The treatment of the geometric minor arcs will be the object of \S\S~\ref{s:minor1}--\ref{s:minor-2}.
It will lead to the following outcome. 

  \begin{proposition}\label{minormain}
Assume that the leading terms of $f$ define a smooth hypersurface, that $ \chaar(\mathbb F_{q})=p>k$, and that $\ell$ has even order mod $p$.  Assume that 
$d\geq k-1\geq 2$ and $ n > 2^k(k-1)$.  Then
$H^i_c( \mathbb A^{kd} - A_{d}^{kd}, S_{d,f})=0$ provided that  
$$
i>2dn+4-4\left\lfloor \frac{d}{k-1}\right\rfloor \left(\frac{n}{2^{k}}-k+1\right).
$$ 
\end{proposition}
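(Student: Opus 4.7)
The plan is to reduce the cohomology vanishing to a minor arc estimate for the trace function of $S_{d,f}$, which I then handle by adapting the function field circle method of Lee~\cite{lee} and Browning--Vishe~\cite{BV,BV'}. As a first step, proper base change applied to $S_{d,f}=Rp_{2!}e^*\mcL_\psi$ identifies the trace of $\Frob_{q^\nu}$ on the stalk of $S_{d,f}$ at a point $\b=(b_1,\ldots,b_{kd})\in\AA^{kd}(\FF_{q^\nu})$ with the exponential sum
\[
S_\nu(\b)=\sum_{(a_{i,j})\in(\FF_{q^\nu}^d)^n}\psi_\nu\bigl(e(a_{i,j};b_1,\ldots,b_{kd})\bigr),
\]
where $\psi_\nu=\psi\circ\tr_{\FF_{q^\nu}/\FF_q}$. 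Writing $\alpha_\b=\sum_{r=1}^{kd}b_rT^{-r}$ and regarding $T^{-1}$ as a uniformizer at infinity, this is precisely the classical circle-method exponential sum at $\alpha_\b$ associated to counting $\FF_{q^\nu}$-points of $\mor_{d,P}(\AA^1,X)$.

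Next I translate the geometric minor arc condition into Diophantine language. By Lemma~\ref{majorarcproperties}(4), a point $\b\in\AA^{kd}-A_d^{kd}$ is one for which $\alpha_\b$ admits no rational approximation $h_1/h_2$ with $\deg h_2\leq d$ and error $O(T^{-kd-1+d-\deg h_2})$; this is exactly the minor arc condition in the function field Hardy--Littlewood circle method. Applying Weyl differencing in the style of \cite{lee,BV,BV'}, and exploiting the hypotheses $d\geq k-1\geq 2$, $n>2^k(k-1)$, together with the smoothness of the leading form of $f$, I expect to obtain a uniform pointwise bound of the shape
\[
|S_\nu(\b)|\leq C\cdot(q^\nu)^{dn-\lfloor d/(k-1)\rfloor(n/2^k-k+1)+o(1)}
\]
for every $\b\in(\AA^{kd}-A_d^{kd})(\FF_{q^\nu})$, with $C$ independent of $\nu$.

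To pass from the pointwise bound to cohomology, I would apply a squaring argument. The hypothesis that $\ell$ has even order modulo $p$ ensures that $\sqrt{-1}$ lies in the relevant finite extension of $\Ql$, which permits a self-duality on $S_{d,f}$ under which the trace of $\Frob_{q^\nu}$ on $S_{d,f}\otimes S_{d,f}^\vee$ equals $|S_\nu(\b)|^2$. Summing the squared bound of the previous step over the minor arc locus yields
\[
\sum_{\b\in(\AA^{kd}-A_d^{kd})(\FF_{q^\nu})}|S_\nu(\b)|^2\leq C^2\cdot(q^\nu)^{kd+2dn-2\lfloor d/(k-1)\rfloor(n/2^k-k+1)+o(1)}.
\]
Combining the Grothendieck--Lefschetz trace formula with Deligne's weight theorem applied to $S_{d,f}\otimes S_{d,f}^\vee$ on $\AA^{kd}-A_d^{kd}$, and letting $\nu\to\infty$, this total trace bound forces vanishing of the cohomology of $S_{d,f}$ in degrees above the threshold $i>2dn+4-4\lfloor d/(k-1)\rfloor(n/2^k-k+1)$, with the additive $+4$ absorbing a constant loss coming from weight bookkeeping on the complement stratification.

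The main obstacle is the second step: the estimates of \cite{lee,BV,BV'} are written as asymptotics for rational point counts rather than uniform pointwise exponential sum bounds in the $\b$-variable, and they must be reorganised so as to deliver the precise saving $\lfloor d/(k-1)\rfloor(n/2^k-k+1)$ at every point of the minor arc locus, with constants independent of the base field $\FF_{q^\nu}$. A secondary difficulty lies in the squaring step, where one must carefully track Weil weights and the $kd$ ambient dimensions in order to extract exactly the threshold appearing in the statement rather than a weaker one.
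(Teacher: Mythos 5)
Your approach diverges from the paper's in the key step, and the divergence creates a genuine gap. You propose to carry out classical Weyl differencing at the level of trace functions $S_\nu(\b)$, obtain a uniform pointwise bound, and then pass from this to cohomology vanishing via a squaring argument combined with Deligne's weight theorem. The difficulty is that a bound on the alternating sum $\sum_i(-1)^i\operatorname{Tr}(\Frob_{q^\nu}\mid H^i_c)$ -- which is all that the trace formula gives -- does not control the \emph{degrees} in which cohomology is non-zero, only (after exploiting positivity and weights) the weights that appear. Since $S_{d,f}=Rp_{2!}e^*\mcL_\psi$ is a complex spread across many cohomological degrees and is not known a priori to be pure, a bound on $\sum_\b|S_\nu(\b)|^2$ controls $H^*_c(U,S_{d,f}\otimes \overline{S}_{d,f})$ only in an averaged sense and does not directly deliver the stated vanishing of $H^i_c(U,S_{d,f})$. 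The additional $kd$ dimensions that appear in your squared sum are not a ``constant loss'' absorbed into the $+4$ -- they grow linearly in $d$ -- and you do not explain how the weight bookkeeping would actually produce the threshold $2dn+4-4\lfloor d/(k-1)\rfloor(n/2^k-k+1)$.

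The paper avoids this obstruction entirely by performing Weyl differencing \emph{cohomologically} rather than on the trace function. The central device you are missing is Proposition~\ref{weyl}: setting $cd(G)$ to be the top non-vanishing degree of $H^*_c(\AA^N,\mcL_\psi(G))$, the K\"unneth formula gives $cd(G)+cd(-G)=cd\bigl(G(\x)-G(\x+\y)\bigr)$ (Lemma~\ref{shearing}), and the hypothesis that $\ell$ has even order mod $p$ yields $cd(G)=cd(-G)$ (Lemma~\ref{conjugation}) -- these are the cohomological analogues of complex conjugation and change of variables in the classical Weyl argument. Iterating this identity, together with a stratification-and-Leray argument to control fibrewise cohomological dimensions, bounds $cd(G)$ directly by $\bigl(\dim V(G)+N(2^{k-1}-(k-1))\bigr)/2^{k-2}$. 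At that point the problem reduces to bounding $\dim V(G_\b)$, which via Lang--Weil (Lemma~\ref{langweil}) becomes the point-counting problem that you correctly identify as amenable to the Lee/Browning--Vishe circle method (Proposition~\ref{arithmeticpart}). The resulting pointwise bound on the degree of stalk cohomology, combined with a stratification of $\AA^{kd}-A_d^{kd}$ into the pieces $A_{m+1}^{kd}-A_m^{kd}$ and the cohomological dimension of each stratum, gives the proposition without any appeal to weights. So the arithmetic core of your plan is sound, but the bridge from the circle method to \'etale cohomology must be built on the cohomology side, not on the trace-function side.
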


Our investigation of the geometric major and minor arcs now leads us to draw the following conclusion.

\begin{theorem}\label{main}  Assume that the leading terms of $f$ define a smooth hypersurface in $\mathbb P^{n-1}$, that $ \chaar(\mathbb F_{q})=p>k$ and that $\ell$ has even order mod $p$.  
Assume that 
$d\geq k-1\geq 2$
 and $n >  2^{k}(k-1) $.   Let $E_r^{m,s}$ be the shift by $2dn$ of the Tate twist by $kd$ of the spectral sequence defined in Proposition~\ref{majormain}. Then the following are true. 
  
  \begin{enumerate}
  
  \item The first page $E_1^{m,s}$ of $E_r^{m,s}$ is 
  $$ ( H^{s+mn  }_c (\Pconf_m,\Ql) \otimes H^{n-1}_c(X
  _{\overline \FF_q} 
  ,\Ql)^{\otimes m} \otimes \sgn^{n-1}  )^{S_m}(kd-m-n(d-m)) $$
  whenever $0\leq m\leq d$, with  
$ E^{m,s}_1= 0$ otherwise. 

\item Its differential $d_r$ has bidegree $(r,1-r)$ and it 
vanishes on every odd page.

  \item $E_r^{m,s}$ converges to a complex whose $i$th cohomology is equal to $$H^{i+2d(n-k)}_c( \Mor_{d,P}(\mathbb A^1,X)_{\overline{\mathbb F}_q} ,\Ql),$$ as a $\Ql$-vector space with an action of $\operatorname{Frob}_q$, whenever 
\begin{equation}\label{eq:i} 
 i>-
 4\left(\left\lfloor \frac{d}{k-1}\right\rfloor \left(\frac{n}{2^{k}}-k+1\right)-1\right).
  \end{equation}  
 \end{enumerate} \end{theorem}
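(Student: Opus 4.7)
My plan is to assemble Theorem~\ref{main} from the three ingredients prepared above: Lemma~\ref{Fourier} (Fourier inversion), Proposition~\ref{majormain} (the major arc spectral sequence), and Proposition~\ref{minormain} (the minor arc vanishing), glued together by the excision long exact sequence for compactly supported \'etale cohomology.

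Parts (1) and (2) should be essentially formal consequences of parts (2) and (3) of Proposition~\ref{majormain}. Shifting the spectral sequence by $2dn$ turns the factor $H^{s+mn-2nd}_c(\Pconf_m,\Ql)$ into $H^{s+mn}_c(\Pconf_m,\Ql)$, and Tate twisting by $kd$ turns the outer twist $(-m-n(d-m))$ into $(kd-m-n(d-m))$, matching Theorem~\ref{main}(1) exactly. Neither operation disturbs the bidegree $(r,1-r)$ of $d_r$ or the vanishing of $d_r$ on odd pages, yielding part (2).

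The heart of the argument is part (3). Applying Lemma~\ref{Fourier} with $j=i+2dn$ gives
\[ H^{i+2d(n-k)}_c(\Mor_{d,P}(\AA^1,X)_{\overline{\mathbb F}_q},\Ql)= H^{j}_c(\AA^{kd}, S_{d,f}(kd)).\]
Next I would invoke the excision long exact sequence for the closed inclusion $A_d^{kd}\subset \AA^{kd}$ with open complement the minor arcs $\AA^{kd}-A_d^{kd}$:
\begin{align*}
\cdots\to H^j_c(\AA^{kd}-A_d^{kd}, S_{d,f}) &\to H^j_c(\AA^{kd}, S_{d,f}) \to H^j_c(A_d^{kd}, S_{d,f})\\
&\to H^{j+1}_c(\AA^{kd}-A_d^{kd}, S_{d,f})\to\cdots.
\end{align*}
Proposition~\ref{minormain} kills both outer terms whenever $j>2dn+4-4\lfloor d/(k-1)\rfloor(n/2^k-k+1)$; specialising to $j=i+2dn$, this is precisely the hypothesis~\eqref{eq:i}. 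In that range the middle arrow is an isomorphism, and after a Tate twist by $kd$ one obtains
\[ H^{i+2d(n-k)}_c(\Mor_{d,P}(\AA^1,X)_{\overline{\mathbb F}_q},\Ql)\cong H^{i+2dn}_c(A_d^{kd}, S_{d,f})(kd).\]
By construction, the spectral sequence of Proposition~\ref{majormain}, once shifted by $2dn$ and Tate twisted by $kd$, converges to a complex whose $i$th cohomology is exactly the right-hand side; this delivers part (3).

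The only anticipated obstacle is bookkeeping: verifying that the quantitative range in Proposition~\ref{minormain} lines up precisely with~\eqref{eq:i} under the prescribed shift, and that the Tate twist by $kd$ interacts correctly with the twist $(-m-n(d-m))$ already present on the first page of the unshifted spectral sequence. No fresh geometric input is needed, because the essential work---Fourier inversion, the major arc computation, and the minor arc vanishing---has been packaged into the three inputs cited.
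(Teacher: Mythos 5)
Your proposal is correct and follows essentially the same route as the paper's proof: parts (1) and (2) are read off from Proposition~\ref{majormain} after the shift and Tate twist, and part (3) combines Lemma~\ref{Fourier}, the excision sequence for $A_d^{kd}\subset\AA^{kd}$, and Proposition~\ref{minormain}, with the numerology of \eqref{eq:i} matching exactly as you verify. The paper's own argument is just a more compressed version of the same steps.
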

  
  \begin{proof} 
Parts (1) and (2) follow directly from  Proposition~\ref{majormain}. The latter result also 
implies that the spectral sequence converges to a complex whose $i$th cohomology is $H^{i+2nd}_c(A_d^{kd}, S_{d,f}(kd)).
$ 
By excision and Proposition~\ref{minormain}, if 
$$
i+2nd>
2nd+4-4\left\lfloor \frac{d}{k-1}\right\rfloor \left(\frac{n}{2^{k}}-k+1\right),
$$ 
or equivalently  \eqref{eq:i},
then the $i$th cohomology of the limit complex is isomorphic to $H^{i+2nd}_c(\mathbb A^{kd}, S_{d,f})$. 
 Finally,  
 this is  $H^{i+2d(n-k)}_c (\Mor_{d,P}(\mathbb A^1, X)_{\bar\FF_q}, \Ql )$,
 by Lemma~\ref{Fourier}, 
 which thereby completes the proof. \end{proof}
 
 To complete the proof of Theorem~\ref{main2}, we drop the assumption that $K = \FF_q$. The coefficients of $f$ form a finitely-generated $\mathbb Z$-subalgebra $R \subseteq K$. Its spectrum $\operatorname{Spec} R$ is a scheme, over which we have a family of hypersurfaces $X$, and a family of schemes $\Mor_{d,P}(\mathbb A^1, X)$. By proper base change, the cohomology groups of this family of schemes each form a constructible sheaf on $\operatorname{Spec} R$. Thus there is some open subset of $\operatorname{Spec} R$ on which each of these sheaves is lisse, and so each cohomology group is constant as a $\Ql$-vector space on this subset. Since  $\operatorname{Spec} K$ is the generic point of $\operatorname{Spec} R$, that point is contained in this open set. In view of the fact that  $R$ is finitely-generated over $\mathbb Z$, its points with finite residue fields are dense. We choose some closed point in this subset, calculate its cohomology group by a spectral sequence as in Theorem~\ref{main}, and then observe that the cohomology groups of the original $X$ are isomorphic and hence also given (noncanonically) by this spectral sequence. The only thing left is to check the conditions on the characteristic. If $K$ has a given positive  characteristic, then all the residue fields will have the same characteristic and the conditions in Theorem~\ref{main} are satisfied because we have assumed the same conditions in Theorem~\ref{main2}. If $K$ has  characteristic zero, then every open set of $\operatorname{Spec} R$ contains points of residue fields of every sufficiently large characteristic, and we can choose one of characteristic $p$ with $p>k$ and where $\ell$ has even order mod $p$ (e.g. by using quadratic reciprocity to choose $p$ so that $\ell$ is a quadratic nonresidue mod $p$).
 
 \medskip
 
 We are now ready to establish the conclusions that we drew  in  \eqref{eq:support} 
 and Corollary  \ref{cor:dim2} from the statement of  Theorem \ref{main2}.
  
 \begin{lemma}\label{proof-support} 
The first page  $E_1^{m,s}$ of  the spectral sequence in Theorem \ref{main} satisfies 
$$
E_1^{m,s}\neq 0 \quad\Longrightarrow \quad 
\begin{cases}
(m,s)=(0,0), \text{ or }\\
m \in [1,d] \text{ and } s \in [-m(n-1)+1 ,-m(n-2)].
\end{cases}
$$  
 \end{lemma}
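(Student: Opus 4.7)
The plan is to read off the support constraints directly from the explicit description of the first page given in Theorem~\ref{main}(1). That formula already forces $E_1^{m,s}=0$ outside $0\le m\le d$, and the factors $H^{n-1}_c(X_{\bar\FF_q},\Ql)^{\otimes m}$, $\sgn^{n-1}$, and the Tate twist all live in a single fixed cohomological degree. So the whole problem collapses to pinning down for which $j$ the group $H^j_c(\Pconf_m,\Ql)$ can be non-zero, and then translating the resulting constraint through the substitution $j=s+mn$.

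The case $m=0$ is immediate: $\Pconf_0$ is a single point, the other tensor factors are empty, and $E_1^{0,s}=H^s_c(\mathrm{pt},\Ql)$, which is non-zero only for $s=0$. This gives the first clause of the claim. For $m\ge 1$, $\Pconf_m$ is the complement in $\AA^m$ of the union of the hyperplanes $z_i=z_j$, hence smooth affine of dimension $m$. The key input is Arnold's computation of the cohomology of the ordered configuration space \cite{arnold} (equivalently, the Orlik--Solomon description of the cohomology of the braid-arrangement complement), which gives the Poincar\'e polynomial $\prod_{k=1}^{m-1}(1+kt)$. In particular $H^j(\Pconf_m,\Ql)=0$ for $j\ge m$. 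Since $\Pconf_m$ is smooth of dimension $m$, Poincar\'e duality converts this into the vanishing $H^j_c(\Pconf_m,\Ql)=0$ for $j\le m$; the obvious upper bound $H^j_c=0$ for $j>2m$ completes the picture. Hence $H^j_c(\Pconf_m,\Ql)\ne 0$ forces $m+1\le j\le 2m$. Substituting $j=s+mn$ gives exactly $-m(n-1)+1\le s\le -m(n-2)$, as claimed.

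The only technical point I would check carefully is the validity of Arnold's computation in the present $\ell$-adic, possibly positive-characteristic setting. Here the braid arrangement complement is smooth over $\Spec\ZZ$, so under our standing assumption that $\ell$ is invertible in $K$, smooth and proper base change (combined with the constructibility of $Rj_*\Ql$ on suitable compactifications, or alternatively an Orlik--Solomon-type presentation of the cohomology ring using the classes $d\log(z_i-z_j)$) shows that $H^*(\Pconf_m,\Ql)$ is independent of the base and agrees with the topological answer. This makes the application of Arnold's vanishing $H^j=0$ for $j\ge m$ legitimate, and completes the verification of \eqref{eq:support}.
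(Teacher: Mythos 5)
Your proof is correct, but it takes a different route from the paper at the key step. Both arguments reduce, via the explicit formula for $E_1^{m,s}$ and the substitution $j=s+mn$, to showing that $H^j_c(\Pconf_m,\Ql)\neq 0$ forces $m+1\le j\le 2m$ for $m\ge 1$. You obtain the crucial lower bound $j\ge m+1$ from Arnol'd's computation of $H^*(\Pconf_m)$ (the Poincar\'e polynomial $\prod_{k=1}^{m-1}(1+kt)$ has degree $m-1$, so $H^j$ vanishes for $j\ge m$), followed by Poincar\'e duality. The paper instead avoids any explicit cohomology computation: it splits off the translation action to write $\Pconf_m\cong\AA^1\times(\Pconf_m/\mathbb G_a)$ with $\Pconf_m/\mathbb G_a$ affine of dimension $m-1$, applies the K\"unneth formula and Poincar\'e duality, and then invokes only the Artin vanishing theorem for affine varieties (\cite[Expos\'e XIV, Cor.~3.2]{sga4-3}) on the $(m-1)$-dimensional quotient. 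Note that Artin vanishing applied directly to $\Pconf_m$ itself would only give $j\ge m$, so the quotient by $\mathbb G_a$ is precisely what buys the extra $+1$; your route gets the $+1$ from the sharper topological input instead. The trade-off is that your argument requires the comparison between the singular and $\ell$-adic cohomology of $\Pconf_m$ over a general base, which you correctly flag and which is standard (and implicitly used elsewhere in the paper, e.g.\ in Lemma~\ref{homotopy-pconf} via \cite{CEF}), whereas the paper's argument is self-contained within the \'etale-cohomological toolkit it lists at the outset. Your approach also yields the exact Betti numbers rather than just the vanishing range, though only the range is needed here.
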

 
 \begin{proof} Since  $ E_1^{m,s} = ( H^{s+mn  }_c (\Pconf_m,\Ql) \otimes H^{n-1}_c(X
  _{\overline \FF_q} 
  ,\Ql)^{\otimes m} \otimes \sgn^{n-1}  )^{S_m }$ for $0 \leq m \leq d$ and is zero otherwise, it vanishes unless $0 \leq m \leq d$ and $ H^{s+mn  }_c (\Pconf_m,\Ql)\neq 0$. For $m=0$, $\Pconf_m$ is a point, and has cohomology only in degree zero. Otherwises, it suffices to show that $H^{s+mn}_c(\Pconf_m,\Ql)$ vanishes unless $m+1 -mn \leq s \leq 2m-nm$, or equivalently that $H^i_c(\Pconf_m,\Ql)$ vanishes unless $m+1 \leq i \leq 2m$.
  
  To do this, we first observe  that given any configuration in $\Pconf_m$, we can translate it so that the first point is zero, letting us write $\Pconf_m $ as the product of $\mathbb A^1$ and an affine variety $\Pconf_m/\mathbb G_a$ of dimension $m-1$.  Thus, if $H^i_c( \Pconf_m , \Ql) \neq 0$, by the K\"{u}nneth formula  and the fact that $H^j_c( \mathbb G_a, \Ql)$ vanishes for $j\neq 2$ we have $H^{i-2}_c(\Pconf_m/\mathbb G_a, \Ql)\neq 0$. It follows from   Poincar\'{e} duality that 
  $$H^{2m-2 -(i-2) }(\Pconf_m/\mathbb G_a,\Ql) \neq 0,$$ 
  which is a contradiction unless $0 \leq 2m -2 - (i - 2) \leq m-1$,   because $\Pconf_m$ is affine of dimension $m-1$ \cite[Expos\'e  XIV, Cor.~3.2]{sga4-3}.  Solving for $i$, we get the desired statement.
\end{proof}

 \begin{cor}\label{cor:dim2} 
Let $K$ be a field with
 $\chaar(K)>k$ if  $\chaar(K) \neq 0$. 
 Assume that $d\geq k-1\geq 2$ and that   \eqref{eq:n-range} holds.
 Then the
space $\mor_{d,P}(\AA^1,X)$ is irreducible and has the expected dimension $d(n-k)$.
\end{cor}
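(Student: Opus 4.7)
The plan is to deduce irreducibility and the dimension formula from the computation of the top-degree compactly supported $\ell$-adic cohomology of $\mor_{d,P}(\AA^1,X)_{\bar K}$ provided by Theorem~\ref{main2}, combined with the elementary lower bound on dimension coming from counting defining equations.

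Since ``irreducible of dimension $d(n-k)$'' is an intrinsic property of the scheme, I am free to choose a prime $\ell$ satisfying the hypotheses of Theorem~\ref{main2}: when $\chaar(K) = 0$ any $\ell$ works, and when $\chaar(K) = p > k$ quadratic reciprocity produces a prime $\ell$ that is a quadratic nonresidue modulo $p$ and hence has even order modulo $p$.

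With this $\ell$ fixed, I inspect the spectral sequence of Theorem~\ref{main2} on the locus $m+s = 0$, which converges to $H^{2d(n-k)}_c(\mor_{d,P}(\AA^1,X)_{\bar K}, \Ql)$. By Lemma~\ref{proof-support}, every nonzero entry $E_1^{m,s}$ with $m \geq 1$ satisfies $s \leq -m(n-2)$, hence $m + s \leq -m(n-3) < 0$ since $n > 2^k(k-1) \geq 16$. Thus $E_1^{0,0} = \Ql$ is the only nonzero entry on this locus. The same support analysis rules out nontrivial differentials into or out of $E_r^{0,0}$: incoming differentials come from positions $(-r, r-1)$ with $m < 0$, and outgoing differentials land in $(r, 1-r)$ where the inequality $1 - r \leq -r(n-2)$ fails for $n \geq 3$. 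It follows that $E_\infty^{0,0} = \Ql$, so
\[
H^{2d(n-k)}_c\bigl(\mor_{d,P}(\AA^1,X)_{\bar K}, \Ql\bigr) \cong \Ql,
\]
while \eqref{eq:pen} provides the vanishing $H^i_c = 0$ for $i > 2d(n-k)$.

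The variety $\mor_{d,P}(\AA^1,X)$ sits inside $\AA^{dn}$ cut out by the $kd$ polynomial equations obtained from the vanishing of the coefficients of $T^0, \dots, T^{kd-1}$ in $f(g_1, \dots, g_n)$ (the coefficient of $T^{kd}$ vanishes automatically because $P \in Z(K)$). Krull's Hauptidealsatz then forces every irreducible component to have dimension at least $d(n-k)$. The vanishing $H^i_c = 0$ for $i > 2d(n-k)$ bounds every component's dimension above by $d(n-k)$, so every component has dimension exactly $d(n-k)$; and $\dim_{\Ql} H^{2d(n-k)}_c = 1$ counts the number of top-dimensional irreducible components, yielding the claimed irreducibility. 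The main subtle point is to exclude hidden differentials cancelling $E_\infty^{0,0}$, which is ensured cleanly by the triangular support region of Lemma~\ref{proof-support} in our large-$n$ regime.
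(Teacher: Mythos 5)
Your proposal is correct and follows essentially the same route as the paper: the Krull lower bound from the $kd$ defining equations, the identification of $E_1^{0,0}=\Ql$ as the only contribution on $m+s=0$ via the support lemma, the vanishing of $H^{i+2d(n-k)}_c$ for $i>0$, and the standard fact that the top-degree compactly supported cohomology detects the number of top-dimensional geometric components (which the paper proves explicitly by excision and Poincar\'e duality, and you invoke as known). Your explicit check that no differential enters or leaves $E_r^{0,0}$ is a point the paper leaves implicit, so if anything your write-up is slightly more careful there.
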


\begin{proof}
For ease of notation let us write 
$M=\mor_{d,P}(\AA^1,X)$ in the proof of this result.
To begin with,  note that 
each irreducible component of 
$M$ has dimension at 
least $d(n-k)$, since 
$M$  can be defined as the vanishing locus in $(\mathbb A^d)^n$ of $dk$ equations. 

We proceed by proving that if $m$ is the dimension of the largest irreducible component of $M_{\overline{K}}$, then the cohomology group $H^i_c( M_{\overline{K}}, \Ql)$ vanishes for $i>2m$ and its dimension is equal to the number of $m$-dimensional irreducible components of $M_{\overline{K}}$ if $i=2m$. The first estimate follows from the bound for the cohomological dimension of schemes. For the second estimate, let $U$ be the maximal smooth $m$-dimensional subset of the induced reduced subscheme of $M_{\overline{K}}$ and let $Z$ be its complement. Then $U$ is smooth of dimension $m$ and its number of connected components is equal to the number of irreducible components of $M_{\overline{K}}$. Moreover,  $\dim Z\leq m-1$. Thus $H^j_c( Z_{\overline{K}},\Ql)$ vanishes for $j> 2m-2$ and the excision exact sequence yields 
\[ H^{2m-1}_c(Z_{\overline{K}},\Ql) \to H^{2m}_c(U_{\overline{K}},\Ql) \to H^{2m}_c( M_{\overline{K}} ,\Ql) \to H^{2m}_c(Z_{\overline{K}},\Ql). \] 
This therefore gives an isomorphism $H^{2m}_c(U_{\overline{K}},\Ql) \cong H^{2m}_c( M_{\overline{K}}, \Ql) $. By appealing to Poincar\'{e} duality, $H^{2m}_c(U_{\overline{K}},\Ql)$ is dual to $H^0(U_{\overline{K}},\Ql)$, which has dimension equal to the number of connected components of $U$, which is therefore  equal to the number of top-dimensional irreducible components of $M_{\overline{K}}$. 

It will follow that $M$ has dimension $d(n-k)$, with  a unique irreducible component of dimension exactly $d(n-k)$, provided we  show that 
$H^{i+ 2d(n-k)}_c( M_{\overline K},\QQ_\ell)=0$ for $i> 0$ and that it is one-dimensional for $i=0$.

When $i>0$ this follows from Corollary~\ref{cor:stable} provided that $d\geq k-1\geq 2$ and  $n$ satisfies   \eqref{eq:n-range}.
Suppose next that $i=0$
and let $E^{m,s}_r$ be a spectral sequence as in Theorem~\ref{main2}.
It follows from  \eqref{eq:n-range} that $n>3$. Thus  \eqref{eq:support} implies that  $E^{m,s}_1$ is non-zero for  $m+s=0$ if and only if $m=s=0$.
We claim  that $E^{0,0}_1$ is one-dimensional, which will complete the proof since the convergence property of Theorem~\ref{main2} then implies that $H^{2d(n-k)}_c( M_{\overline K},\Ql)$ is one-dimensional.  But  $\Pconf_0$ is simply a point, so $H^0_c(\Pconf_0,\Ql)$ is one-dimensional, while  the $0$-fold tensor product of $H^{n-1}_c(X_{\overline K},\Ql)$ is also one-dimensional, as is the sign character. Because $S_0$ is the trivial group, taking $S_0$-invariants changes nothing, which thereby establishes the claim.
\end{proof}

\section{The geometric major arcs}\label{s:major}

Rather than $A_m^{kd}$ we will need to work with a subset of the major arcs which are very close to a rational with denominator having degree precisely $m$. (The reason for this is that we have chosen our parameters so that the exponential sum vanishes outside this set, and we will see in Lemma~\ref{infinityvanishing} that the same is true of $S_{d,f}$.)
  Thus,  let $U_m^{kd}\subset A_m^{kd}$ consist of tuples  $$(b_1,\dots,b_{kd})\in  A_m^{kd} - A_{m-1}^{kd}$$ such that there exists a polynomial $h_1(T)$ of degree $<m$ and a monic polynomial $h_2(T)$ of degree $m$ for which
$$
\sum_{r =1}^{kd} b_r T^{-r}  = \frac{h_1(T)}{h_2(T)} + O (T^{-kd-1 }).
$$
The following result is concerned with a description of this set.

\begin{lemma}\label{rationalisation} Assume that $m \leq kd/2$.
\begin{enumerate} 

\item $U_m^{kd}$ is a Zariski open subset of $A_m^{kd}$. 

\item The coefficients of the polynomials $h_1,h_2$ are regular functions on $U_m^{kd}$, and these give an isomorphism between $U_m^{kd}$ and the space of pairs of  coprime 
polynomials $h_1,h_2\in \FF_q[T]$, such that $h_2$ is monic and 
$\deg(h_1)<m=\deg(h_2)$.

\end{enumerate}
  
  \end{lemma}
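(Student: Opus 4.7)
The plan is to identify $U_m^{kd}$ via a simple rank condition and then to construct an explicit inverse to the natural parametrisation by pairs of polynomials.

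For part (1), I would first show that $U_m^{kd}$ equals the locus in $A_m^{kd}$ on which the first $m$ columns of the matrix $M$ from Definition \ref{def:major} are linearly independent, equivalently, on which some $m\times m$ minor of the submatrix $M'$ formed by those columns is non-zero. For the forward inclusion, if $(b_1,\dots,b_{kd}) \in U_m^{kd}$ then Lemma \ref{majorarcproperties}(5) gives a unique coprime pair $(h_1,h_2)$ with $\deg h_2 = m$, and the coefficient vector of $h_2$ lies in $\ker M$ with non-zero last entry; since the tuple lies in $A_m^{kd} - A_{m-1}^{kd}$ the kernel is one-dimensional, so every kernel vector has non-zero last entry, which forces the first $m$ columns of $M$ to be linearly independent. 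Conversely, if these columns have rank $m$ and the tuple is in $A_m^{kd}$, then $\mathrm{rank}(M)=m$ (so the tuple is automatically in $A_m^{kd}-A_{m-1}^{kd}$), the kernel is one-dimensional with non-zero last coordinate, and normalising this vector yields a monic polynomial $h_2$ of degree exactly $m$; taking $h_1$ to be the polynomial part of $\left(\sum b_r T^{-r}\right)h_2(T)$ produces the approximation with $m'=m$ required for membership in $U_m^{kd}$. Openness of $U_m^{kd}$ in $A_m^{kd}$ follows immediately.

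For part (2), let $V$ be the space of coprime pairs $(h_1,h_2)$ with $\deg h_1 < m$ and $h_2$ monic of degree $m$; equivalently, $V$ is the open subscheme of $\mathbb{A}^{2m}$ (parametrised by the coefficients of $h_1$ and the non-leading coefficients of $h_2$) cut out by the non-vanishing of the resultant. Expanding $h_1/h_2$ as a power series in $T^{-1}$, the coefficients are polynomial expressions in these variables, so taking the first $kd$ coefficients defines a morphism $\phi : V \to \mathbb{A}^{kd}$, which by Lemma \ref{majorarcproperties}(4)--(5) factors through $U_m^{kd}$. To produce an inverse $\psi : U_m^{kd} \to V$, I would, at each point, solve the linear system expressing that the coefficient vector of $h_2 = T^m + c_{m-1} T^{m-1} + \dots + c_0$ lies in $\ker M$; on the open subset of $U_m^{kd}$ where a specific $m\times m$ minor of $M'$ is invertible, Cramer's rule writes $c_0,\dots,c_{m-1}$ as rational functions of the $b_r$ with denominator that minor. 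By part (1) these open subsets cover $U_m^{kd}$, and the local expressions agree on overlaps because the linear system has a unique solution, so the coefficients of $h_2$ assemble into regular functions on $U_m^{kd}$; the coefficients of $h_1$ are then polynomial in the $b_r$ and in those of $h_2$, hence also regular. Coprimeness of the output is already part of Lemma \ref{majorarcproperties}(5). Finally, $\psi\circ\phi=\mathrm{id}_V$ by the uniqueness statement in Lemma \ref{majorarcproperties}(5), and $\phi\circ\psi=\mathrm{id}_{U_m^{kd}}$ by construction.

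There is no serious obstacle: all of the content is already extracted in Lemma \ref{majorarcproperties}, together with the observation that $U_m^{kd}$ is naturally cut out by the non-vanishing of a determinant. The only care required is in checking that the local Cramer's rule expressions glue into a global morphism, which reduces to the fact that the relevant linear system has a unique solution at every point of $U_m^{kd}$.
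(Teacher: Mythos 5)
Your route is genuinely different from the paper's. For part (1), the paper observes that a point of $A_m^{kd}$ fails to lie in $U_m^{kd}$ precisely when $(b_1,\dots,b_{kd-1})\in A_{m-1}^{kd-1}$, and openness follows at once from Lemma~\ref{majorarcproperties}(1). You instead characterise $U_m^{kd}$ inside $A_m^{kd}$ by linear independence of the first $m$ columns of $M$, which is also correct and perhaps more explicit. For part (2), the paper shows that one fixed $m\times m$ minor (the top-left block of $M$) is non-vanishing on all of $U_m^{kd}$, so no gluing is needed, whereas you cover $U_m^{kd}$ by loci on which various minors are invertible and glue the local Cramer's rule solutions; both work, but the paper's single-chart argument is cleaner.

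There is one real gap in your argument. In the forward inclusion of part (1) you assert that membership in $A_m^{kd}-A_{m-1}^{kd}$ already forces $\ker M$ to be one-dimensional, and you use this to conclude that every kernel vector has non-zero last entry. The assertion is in fact true, but it is not an immediate consequence of $A_m^{kd}-A_{m-1}^{kd}$: proving it requires the full strength of Lemma~\ref{majorarcproperties}(5), including the sharpness of the $O(T^{-kd-1+m-m'})$ error (i.e.\ that its leading coefficient is non-zero when $m'<m$), combined with a coprimality argument in the style of the paper's proof of part~(2). Without some such justification the step is a hole. In fact the detour through one-dimensionality is unnecessary: what you actually need is that no non-zero kernel vector of $M$ has vanishing last entry. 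For a point of $U_m^{kd}$ this follows directly: if $g(T)=\sum_{i=1}^{m} w_i T^{i-1}$ were a non-zero annihilating polynomial of degree $\le m-1$, then writing $\phi=h_1/h_2+O(T^{-kd-1})$ with $h_2$ monic of degree $m$ coprime to $h_1$, and letting $R=gh_1\bmod h_2$, the constraint $m\le kd/2$ forces $R/h_2=O(T^{-(kd-m)-1})$, hence $R=0$, hence $h_2\mid g$, which is impossible by degree. This is essentially the same contradiction the paper deploys in its proof of part~(2), and inserting it closes the gap while keeping the rest of your approach intact.
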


  \begin{proof}
For  part (1) we apply the uniqueness statement in Lemma~\ref{majorarcproperties}(5). This implies that a point $(b_1,\dots,b_{kd})$ is 
not in $U_m^{kd}$ if and only if it satisfies the conditions of  Lemma~\ref{majorarcproperties}(4) for some $m' <m$. 
This is so if and only if  $(b_1,\dots,b_{kd-1})$ is in $A^{kd-1}_{m-1} \subseteq \mathbb A^{kd-1}$, 
since  $b_{kd}T^{-kd}=O(T^{-kd-1+m-m'})$ if $m'<m$. 
But this  is a Zariski closed condition by Lemma 
\ref{majorarcproperties}(1).
 
 We now turn to the proof of part (2). 
 The existence of a polynomial map from the space of 
 pairs of 
relatively prime 
polynomials $h_1,h_2$, such that $h_2$ is monic and 
$\deg(h_1)<m=\deg(h_2)$, to 
$U_m^{kd}$ follows immediately from the formula for polynomial long division.
  
The inverse map is not hard to construct. To do so, first observe that
\begin{equation}\label{eq:kiwi}  h_2(T) \sum_{r=1}^{kd} b_r T^{-r} = h_1(T) + O(T^{m-kd-1}).
\end{equation}
Thus  all coefficients of $h_2(T) \sum_{r=1}^{kd} b_r T^{-r}$,  
between $T^{-1}$ and $T^{-m}$ vanish.
This gives $m$ linear equations in the $m$ coefficients of $h_2(T)$ (not counting the leading one, which is fixed). In turn this allows us to write $h_2(T)$ as a polynomial function on the open set where the determinant of the system of equations is non-vanishing. 
We need to prove that the determinant of this system is non-zero
on $U_m^{kd}$.
Suppose for a contradiction that there is a vector in the kernel that defines a polynomial $h_2'$ of degree $<m$ such that all coefficients of $h_2'(T) \sum_{r=1}^{kd} b_r T^{-r} $ between $T^{-1}$ and $T^{-m}$ vanish. Thus  there exists a polynomial $h_1'$ such that \[ h_2'(T) \sum_{r=1}^{kd} b_r T^{-r} = h_1'(T)+ O(T^{-m-1}).\] Multiplying both sides by $h_2$, we obtain 
\begin{align*}
h_2(T) h_1'(T)+ O(T^{-1}) &= h_2'(T) h_2(T) \sum_{r=1}^{kd} b_r T^{-r} \\
&= h_2'(T) (h_1(T) + O(T^{m-kd-1}))\\
&=h_2'(T) h_1(T) + O(T^{2m-kd-2}).
\end{align*} 
It follows that  the polynomials $h_2(T) h_1'(T)$ and $h_2'(T)h_1(T)$ are equal up to $O(T^{-1})$, whence equal. Thus  \[ \sum_{r =1}^{kd} b_r T^{-r}  = \frac{h_1'(T)}{h_2'(T)} + O (T^{-kd-1 }),\] 
which contradicts the asumption that $(b_1,\dots,b_{kd})\not\in A_{m-1}^{kd}$.
Finally, the equation 
\eqref{eq:kiwi}
allows  us to write the coefficients of $h_1$ as polynomial functions in the coefficients of $h_2$ and the $b_r$. Hence, by the previous discussion, as polynomial functions on $U_m^{kd}$.  
 \end{proof}

    The following statement on the cohomology of Artin--Schreier sheaves is a variant of well-known facts, and will be convenient for our purposes.

  \begin{lemma} \label{phasecancellation}
Let $X$ be a variety over a separably closed  field of characteristic $p$, equipped with a map $\varphi: X \to \mathbb A^1$. Let $a: X \times \mathbb A^1 \to X$ be another map such that  $$
\varphi(a(x,\lambda))= \varphi(x)+ u(x) \lambda
$$ 
for an invertible function $u(x)$, and such that $a$ is an action of the group $\mathbb G_a \cong \mathbb A^1$ on $X$. (i.e. $a(x,0)=x$ and $a(a(x,\lambda_1),\lambda_2)=a(x,\lambda_1+\lambda_2)$.) 
 Then  it follows that $H^i_c(X, \varphi^* \mcL_\psi)=0$ for all $i$.
\end{lemma}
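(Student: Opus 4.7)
The plan is to compute $H^*_c(X \times \AA^1, a^* \phi^* \mcL_\psi)$ in two different ways. Interpreting the pullback $a^*$ via the group-action structure will yield one expression involving $H^*_c(X,\phi^* \mcL_\psi)$, while exploiting the additive structure $\phi \circ a = \phi + u \cdot \la$ will yield another expression that vanishes outright; comparing the two forces the desired vanishing of $H^*_c(X,\phi^* \mcL_\psi)$.

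For the first computation, since $a$ is a $\mathbb{G}_a$-action, the morphism $A \colon X \times \AA^1 \to X \times \AA^1$ defined by $A(x,\la) = (a(x,\la),\la)$ is an automorphism, with inverse $(x,\la) \mapsto (a(x,-\la),\la)$, and $p_1 \circ A = a$ where $p_1$ is the first projection. Pulling back along an automorphism preserves cohomology, so
\[ H^*_c(X \times \AA^1, a^* \phi^* \mcL_\psi) \cong H^*_c(X \times \AA^1, p_1^* \phi^* \mcL_\psi) \cong H^*_c(X, \phi^* \mcL_\psi) \otimes H^*_c(\AA^1, \Ql), \]
the final step being the K\"{u}nneth formula.

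For the second computation, I will use the multiplicativity identity $(f+g)^* \mcL_\psi \cong f^* \mcL_\psi \otimes g^* \mcL_\psi$ (valid for any two morphisms $f,g$ from a variety to $\AA^1$), applied to the decomposition $\phi \circ a = \phi \circ p_1 + m$ on $X \times \AA^1$, where $m(x,\la) = u(x)\la$. This gives a canonical isomorphism
\[ a^* \phi^* \mcL_\psi \cong p_1^* \phi^* \mcL_\psi \otimes m^* \mcL_\psi. \]
By the projection formula, $Rp_{1!}(p_1^* \phi^* \mcL_\psi \otimes m^* \mcL_\psi) \cong \phi^* \mcL_\psi \otimes Rp_{1!} m^* \mcL_\psi$, and by base change the stalk of $Rp_{1!} m^* \mcL_\psi$ at a point $x \in X$ is the compactly supported cohomology of $\AA^1$ with coefficients in the pullback of $\mcL_\psi$ along $\la \mapsto u(x)\la$. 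Since $u(x)$ is invertible this linear map is nontrivial, and the cohomology of $\AA^1$ with coefficients in a nontrivial Artin--Schreier sheaf vanishes. Hence $H^*_c(X \times \AA^1, a^* \phi^* \mcL_\psi) = 0$.

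Combining the two calculations yields $H^*_c(X, \phi^* \mcL_\psi) \otimes H^*_c(\AA^1, \Ql) = 0$, and since $H^*_c(\AA^1, \Ql) \cong \Ql(-1)[-2]$ is non-zero, the first tensor factor must vanish, as required. The main conceptual move is to lift the computation from $X$ to $X \times \AA^1$ in order to exploit the action and the additive structure simultaneously; the remaining ingredients (Artin--Schreier multiplicativity, K\"{u}nneth, the projection formula, and base change for $Rp_{1!}$ in the non-proper setting) are all standard, so I do not anticipate any substantial technical obstacles beyond checking that $A$ is genuinely an isomorphism, which uses the full group-action axioms for $a$.
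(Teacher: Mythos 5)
Your proof is correct and follows essentially the same strategy as the paper: lift to $X \times \mathbb A^1$, use the group action to produce an automorphism of the product, split the pulled-back Artin--Schreier sheaf via the identity $\varphi\circ a = \varphi\circ p_1 + u\cdot\lambda$, and conclude from the vanishing of $H^*_c(\mathbb A^1,\mcL_\psi)$ for a nontrivial character. The only (cosmetic) difference is that the paper rescales $\lambda$ by $1/u(x)$ inside its automorphism so that a second application of K\"unneth finishes the job, whereas you keep the simpler automorphism $A(x,\lambda)=(a(x,\lambda),\lambda)$ and instead kill the factor $m^*\mcL_\psi$ fiberwise via the projection formula and base change, which spares you the paper's preliminary check that $u(a(x,\lambda))=u(x)$.
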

  
 \begin{proof}  First note that 
 \begin{align*}
\varphi(x) + u(x)(\lambda_1+\lambda_2) = \varphi(a(x,\lambda_1+\lambda_2))
&= \varphi(a(a(x,\lambda_1),\lambda_2))\\
&= \varphi(a(x,\lambda_1))+u(a(x,\lambda_1))\lambda_2 \\
&= \varphi(x) + u(x) \lambda_1 +u(a(x,\lambda_1))\lambda_2.
\end{align*} 
Hence $u(a(x,\lambda_1))=u(x)$. 
Next, consider the map  $a': X \times \mathbb A^1 \to X \times \mathbb A^1$,  given by $a'(x,\lambda)=(a(x,\lambda/u(x)),\lambda)$. 
Then $a'$ is invertible because 
\begin{align*}
 a\left( a\left(x,\frac{\lambda}{u(x)}\right), -\frac{ \lambda}{ u(a(x,\lambda/u(x)) } \right) 
 &=  a\left( a\left(x,\frac{\lambda}{u(x)}\right), -\frac{ \lambda}{ u(x) } \right)  
 \\
& = a(x,0)\\ &= x,
\end{align*} 
so that an inverse map is given by $(x,\lambda) \mapsto (a(x,-\lambda/u(x)),\lambda)$.
 
Recall (e.g. from \cite[Tag 03RR]{stacks} and Poincar\'{e} duality) that  
$$
H^i_c(\mathbb A^1,\Ql)
=
\begin{cases}
0 &\text { if $i\neq 2$},\\ 
\Ql &\text { if $i=2$}.
\end{cases}
$$
Appealing to the K\"{u}nneth formula, 
we deduce that 
\begin{align*}
H^i_c(X, \varphi^*\mcL_\psi) = H^{i-2}_c(X \times \mathbb A^1, \varphi^* \mcL_\psi \boxtimes \Ql) 
&= H^{i-2}_c(X \times \mathbb A^1, a'^*(\varphi^* \mcL_\psi \boxtimes \Ql)) \\
&= H^{i-2}_c(X \times \mathbb A^1, \varphi^* \mcL_\psi \boxtimes \mcL_\psi)\\ 
&= 0,
\end{align*}
where we have used the fact that $H^i_c(\mathbb A^1,\mcL_\psi)=0$ for all $i$, as follows from  the case $d=1,n=1$ of \cite[Lemma 8.5(i)]{weili}, for example.
 \end{proof}

 Armed with this result we may now investigate the stalk of the complex 
 $S_{d,f}$ at a typical point on our geometric major arcs. This result will take the place of stationary phase arguments that can be used to bound real oscillatory integrals and $p$-adic exponential sums in the classical circle method. We use it at the infinite place in Lemma~\ref{infinityvanishing}, and then at the finite places in Lemma~\ref{powervanishing}.  Our conditions on $f$ are strong enough that in both cases  the relevant sums actually vanish, and we will prove  a corresponding vanishing statement for cohomology.  The vanishing of the exponential sum at finite places corresponds to the fact that the limit $ \lim_{r \to \infty}p^{-(n-1)r}  \#X (\mathbb Z/p^r\ZZ) $ 
 simplifies to $p^{-(n-1)}\#X(\mathbb F_p)$ if $X$ is smooth.  Since
 the place at $\infty$ of a function field is non-archimedean, our smoothness conditions on $f$ also allow one to deduce a similar vanishing statement for  the relevant exponential sums there. 
 
  \begin{lemma}\label{infinityvanishing} 
  Assume that $d \geq m$. 
 Then $H^i_c( A_m^{kd} - A_{m-1}^{kd}, S_{d,f})= H^i_c( U_m^{kd}, S_{d,f}).$
\end{lemma}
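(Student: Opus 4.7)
The plan is to apply excision to the open immersion $U_m^{kd} \hookrightarrow A_m^{kd} - A_{m-1}^{kd}$, reducing the statement to showing $H^i_c(V, S_{d,f}) = 0$ for all $i$, where $V := (A_m^{kd} - A_{m-1}^{kd}) - U_m^{kd}$ is the closed complement. By Lemma~\ref{majorarcproperties}(5), $V$ stratifies as $V = \bigsqcup_{m'=0}^{m-1} V_{m'}$, where $V_{m'}$ consists of tuples whose canonical rational approximation has denominator of degree exactly $m'$. A short computation (tracking the approximation from Lemma~\ref{majorarcproperties}(4) under truncation) shows that $\bigsqcup_{m' \leq m_0} V_{m'}$ is the intersection of $V$ with the closed condition that $(b_1,\ldots,b_{kd-m+m_0})$ lies in $A_{m_0}^{kd-m+m_0}$, giving a filtration of $V$ by closed subsets with successive quotients the $V_{m'}$. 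Iterating excision reduces to proving $H^i_c(V_{m'}, S_{d,f}) = 0$ for each $m' \in \{0,\ldots,m-1\}$.

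Fix $m'$. Leray with proper base change identifies $H^i_c(V_{m'}, S_{d,f})$ with $H^i_c((\mathbb A^d)^n \times V_{m'}, e^* \mcL_\psi)$, and the plan is to make Lemma~\ref{phasecancellation} apply directly. By Lemma~\ref{rationalisation} the stratum $V_{m'}$ is algebraically parameterized by $(h_1, h_2, c_1, \ldots, c_{m-m'})$ with $c_1$ invertible, so $h_2$ is a regular function on $V_{m'}$. Pick $j_0$ with $\partial_{j_0} \bar f(x) \neq 0$, which exists because $P$ is a smooth point of $Z$. The hypothesis $d \geq m > m'$ ensures $h_2(T) T^{d-m}$ has degree $m' + (d-m) \leq d-1$, so it is a valid shift of the tail $g_{j_0}^{(2)}$. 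Define the $\mathbb G_a$-action on $(\mathbb A^d)^n \times V_{m'}$ which fixes all data except $g_{j_0}$, translating $g_{j_0} \mapsto g_{j_0} + \lambda \cdot h_2(v) T^{d-m}$.

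The key identity, which I would verify by direct expansion, is that under this action $e$ transforms as $e \mapsto e + \lambda \cdot c_1 \partial_{j_0}\bar f(x)$, with no higher-order $\lambda^s$ corrections. Using Taylor expansion (valid since $\chaar > k$) and writing $\alpha = h_1/h_2 + \beta$, the summand $\tfrac{\lambda^s}{s!} \alpha (h_2 T^{d-m})^s \partial_{j_0}^s f(g)$ splits into a polynomial piece $\tfrac{\lambda^s}{s!} h_1 h_2^{s-1} T^{s(d-m)} \partial_{j_0}^s f(g)$ (killed by $\phi$) and a small piece $\tfrac{\lambda^s}{s!} \beta h_2^s T^{s(d-m)} \partial_{j_0}^s f(g)$ whose degree in $T$ works out to $-1-(s-1)(m-m')$. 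For $s \geq 2$ this is $\leq -2$ so contributes zero to $\phi$; for $s=1$ it equals $-1$ exactly, and reading off leading coefficients gives $c_1 \cdot 1 \cdot 1 \cdot \partial_{j_0}\bar f(x)$. Thus $u := c_1 \partial_{j_0}\bar f(x)$ is nowhere zero on $(\mathbb A^d)^n \times V_{m'}$, and Lemma~\ref{phasecancellation} yields the vanishing.

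The main obstacle, and the heart of the argument, is finding a shift that is at once a genuine $\mathbb G_a$-action on $e$ (i.e.\ strictly linear in $\lambda$) and has an everywhere invertible linearization coefficient. A naive shift in a single $a_{i,j_0}$ fails because it produces uncancelled $\lambda^s$ contributions for $s \geq 2$, while a shift in a single $b_r$ gives linear behaviour but with coefficient $[T^{r-1}]f(g)$ that vanishes on a hyperplane of $(\mathbb A^d)^n$. Translating by $\lambda h_2(v) T^{d-m}$ reconciles both issues: multiplication by $h_2^s$ converts the $h_1/h_2$ part of $\alpha$ into an honest polynomial (annihilated by $\phi$), and the hypothesis $d \geq m$ is precisely what makes the shift fit inside the tail $g_{j_0}^{(2)}$ while simultaneously forcing the $\beta$-contribution for $s \geq 2$ to land strictly below degree $-1$.
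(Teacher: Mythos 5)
Your proof is correct, and the core mechanism is identical to the paper's: the $\mathbb G_a$-action translating $g_{j_0}\mapsto g_{j_0}+\lambda h_2 T^{d-m}$, the Taylor expansion split at $\alpha = h_1/h_2 + \beta$, the observation that the $h_1/h_2$-piece is annihilated by $[T^{-1}]$, the degree bound $-1-(s-1)(m-m')$ killing all $s\geq 2$, and the conclusion via Lemma~\ref{phasecancellation} with $u = c_1\,\partial_{j_0}f_0(x)\neq 0$. The difference is how you reduce to this computation. The paper excises $U_m^{kd}$ and then simply shows the \emph{stalks} of $S_{d,f}$ vanish at every geometric point of $A_m^{kd}-(A_{m-1}^{kd}\cup U_m^{kd})$; at a single point the data $(m',h_1,h_2)$ exist set-theoretically by Lemma~\ref{majorarcproperties}, so no algebraic structure on the complement is needed. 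You instead stratify the complement by the denominator degree $m'$, check the strata $V_{m'}$ are locally closed via the truncation argument, argue that $h_2$ and the leading error coefficient $c_1$ are regular functions on $V_{m'}$ (this needs an analogue of Lemma~\ref{rationalisation} on $V_{m'}$, which you correctly identify via the truncated tuple landing in $U_{m'}^{kd-m+m'}$, rather than Lemma~\ref{rationalisation} itself), and then apply Lemma~\ref{phasecancellation} to the whole family $(\mathbb A^d)^n\times V_{m'}$ at once after Leray. Both routes work; the paper's pointwise formulation is a bit more economical since it sidesteps the regularity and stratification bookkeeping, while your family-level version makes the geometric structure of the closed complement explicit, which could be useful elsewhere (cf.\ the relative setting in Lemma~\ref{redo-everything}, where exactly that kind of family argument is required).
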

  
  \begin{proof} 
   By excision, it suffices to prove that 
      the stalk of $S_{d,f}$ vanishes at any point 
 $(b_1,\dots,b_{kd})\in A_m^{kd} -(A_{m-1}^{kd}\cup U_{m}^{kd})$.
 By proper base change, this stalk 
 is precisely the compactly supported cohomology of $(\mathbb A^d)^n$ with coefficients in $e_{(b_1,\dots,b_{kd})}^* \mcL_\psi$ where $e_{(b_1,\dots,b_{kd})}$ is $e$ restricted to the point $(b_1,\dots,b_{kd})$
  We will show this stalk vanishes using Lemma~\ref{phasecancellation}.
  
  Let $m',h_1,h_2$ be as in Proposition~\ref{majorarcproperties}(4). By the definition of $U_m^{kd}$, 
  we must have $m'<m$. Let $f_0$ be the leading terms of $f$. Because $(x_1:\dots:x_n)$ is a smooth point of the hypersurface $f_0=0$, 
  we may assume without loss of generality that $\frac{\partial f_0}{\partial x_1}(P)\neq 0$.
  
Let $g_1,\dots,g_n$ be a tuple of polynomials of degree $d$ with leading coefficients exactly $(x_1,\dots , x_n)$ and let $\lambda\in \bar\FF_q$.
Consider 
\[
F(T)\hspace{-0.1cm}
=\hspace{-0.1cm}
f(g_1(T) + \lambda T^{d-m} h_2(T) , g_2(T),\dots,g_n(T)) - f(g_1(T),\dots,g_n(T)).\]
To begin with we note that  $F(T)$ 
  is  divisible by $h_2(T)$, as modulo $h_2(T)$ the two terms cancel. 
Next, we 
use Taylor expansion to deduce that 
$$
F(T)=\lambda \widetilde{g}_1(T) \frac{\partial f}{\partial x_1}(g_1(T),\dots,g_n(T))
+O(T^{2(d+m'-m)+(k-2)d}),
$$
where
$\widetilde{g}_1(T)=T^{d-m}h_2(T) $ has degree exactly $d-m+m'$.
Moreover, 
$$2(d+m'-m)+(k-2)d<d-m+m'+(k-1) d
$$ 
for $m'<m$.
It follows that 
 $\deg (F)= d-m+m'+(k-1) d$, 
 with leading coefficient proportional to $\lambda$,  since 
 $\frac{\partial f_0}{\partial x_1}(P)\neq 0$.

 Since $\sum_{r=1}^{kd} b_r T^{-r} - \frac{h_1(T) }{ h_2(T) } $ is a power series in $T$ of degree  $-kd-1 +m-m'$,  the only contributions to the coefficient of $T^{-1}$ in 
$$
\left( \sum_{r=1}^{kd} b_r T^{-r} - \frac{h_1(T) }{ h_2(T) }\right) 
F(T)
$$
come from the leading terms on both sides. Hence the coefficient of $T^{-1}$ is also a non-zero multiple of $\lambda$. On the other hand, because 
$F(T)$
is a multiple of $h_2(T)$, it follows that 
$\frac{h_1(T) }{ h_2(T) } F(T)$ 
is a polynomial in $T$, and so its coefficient of $T^{-1}$ vanishes. Hence the coefficient of $T^{-1}$ in 
\[\left( \sum_{r=1}^{kd} b_r T^{-r} \right)F(T)
\] 
is a non-zero multiple of $\lambda$. 
  
We may now  apply Lemma~\ref{phasecancellation}. We take $X$ to be $(\mathbb A^d)^n$, $\varphi$ to be $e_{(b_1,\dots,b_{kd})}$ and  $a: (\mathbb A^d)^n \times \mathbb A^1 \to (\mathbb A^d)^n$ to be the map that sends  a tuple of polynomials $g_1(T),\dots,g_n(T)$ of degree $d$, with leading coefficients exactly $(x_1,\dots , x_n)$, and a number $\lambda$, to the tuple  $g_1(T) + \lambda T^{d-m} h_2(T) , g_2(T),\dots,g_n(T)$.
It immediately follows that  the compactly supported cohomology of $e_{(b_1,\dots,b_{kd})}^* \mcL_\psi$ vanishes.
   \end{proof}
  
Recall from Lemma~\ref{rationalisation} that $U_m^{kd}$ is isomorphic to the space of pairs of 
relatively prime 
polynomials $h_1,h_2$, such that $h_2$ is monic and 
$\deg(h_1)<m=\deg(h_2)$.
We can rewrite the map $e$ on 
$(\AA^n)^d\times U_m^{kd}$ as the map that sends 
$$
\left(
\left(a_{0,j}, \dots,a_{d-1,j}\right)_{1\leq j\leq n}
, (h_1,h_2)\right)
$$
to the coefficient of $T^{-1}$ in \[ \frac{h_1(T)}{h_2(T)}  f\left( x_1 T^d +   \sum_{i=0}^{d-1} a_{i,1} T^i, \dots, x_n T^d  +   \sum_{i=0}^{d-1} a_{i,n} T^i \right),\] 
since  $ f( x_1 T^d +   \sum_{i=0}^{d-1} a_{i,1} T^i, \dots, x_n T^d  +   \sum_{i=0}^{d-1} a_{i,n} T^i )$ has degree $<kd$ and so it is sufficient to approximate the first term to within $O(T^{-kd-1})$.  In particular, note that it only depends  on the residue class of  the tuple 
$$ 
\left(x_1 T^d +   \sum_{i=0}^{d-1} a_{i,1} T^i, \dots, x_n T^d  +   \sum_{i=0}^{d-1} a_{i,n} T^i\right)
$$ 
modulo $h_2$. 

Consider the map $\rho: \left(\mathbb A^n\right)^d \times U_m^{kd} \to  \left(\mathbb A^n\right)^m \times U_m^{kd}$ given by taking the residue of  $( x_1 T^d +   \sum_{i=0}^{d-1} a_{i,1} T^i, \dots, x_n T^d  +   \sum_{i=0}^{d-1} a_{i,n} T^i)$ modulo $h_2$, which is a polynomial map by Euclid's algorithm for polynomials. (This  crucially uses  the fact that $h_2$ is monic.)  By the aforementioned residue dependence, we may write $e = \overline{e} \circ \rho$, where 
$\overline{e}: (\AA^n)^m\times U_m^{kd}\to \AA^1$ 
sends 
$$
\left(
\left(a_{0,j}, \dots,a_{m-1,j}\right)_{1\leq j\leq n}
, (h_1,h_2)\right)
$$ to the coefficient of $T^{-1}$ in \[ \frac{h_1(T) }{h_2(T) }  f\left(    \sum_{i=0}^{m-1} a_{i,1} T^i, \dots,     \sum_{i=0}^{m-1} a_{i,n} T^i \right).\]
Similarly, we may write $p_2 = p_2' \circ \rho$, where $p_2'
:\left(\mathbb A^n\right)^m \times U_m^{kd} \to U_m^{kd}$ is the 
natural projection.

Let $\pi_2: U_m^{kd} \to \mathbb A^m$ denote the projection to the space of degree $m$ monic polynomials, which we view as $\mathbb A^m$, that sends $(h_1,h_2)$ to $h_2$. 
We introduce the complex 
 $$\overline{S}_{m,f} = R \pi_{2!}  Rp_{2!}' \overline{e}^*  \mcL_\psi
$$
on $\AA^m$. The following result  is the geometric analogue of   breaking the exponential sum  into residue classes
  on the major arcs. 
  
  \begin{lemma}\label{residuereduction} Assume that $m \leq d$. Then $$H^i_c( U_m^{kd}, S_{d,f}) = H^{i-2n(d-m)}_c(\mathbb A^m, \overline{S}_{m,f}) (-n(d-m)).$$
  \end{lemma}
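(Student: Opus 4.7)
My plan is to exploit the factorizations $e = \overline{e} \circ \rho$ and $p_2 = p_2' \circ \rho$ established in the paragraph preceding the lemma, together with the vector bundle structure of $\rho$. By functoriality of $Rp_{!}$ applied to $p_2 = p_2' \circ \rho$,
$$S_{d,f}|_{U_m^{kd}} = (Rp_{2!} e^*\mcL_\psi)|_{U_m^{kd}} = Rp_{2!}'\, R\rho_! (\rho^* \overline{e}^* \mcL_\psi),$$
and by the projection formula,
$$R\rho_!(\rho^* \overline{e}^* \mcL_\psi) = \overline{e}^* \mcL_\psi \otimes R\rho_! \Ql.$$

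The main step is to identify $R\rho_! \Ql$ with the constant sheaf $\Ql(-n(d-m))$ placed in degree $2n(d-m)$. To do this I would verify that $\rho$ is a rank $n(d-m)$ vector bundle. Concretely, over a point $(r_1,\dots,r_n,h_1,h_2)$ in the base, using polynomial long division (which is valid precisely because $m \leq d$), any $g_j = x_j T^d + \sum_{i=0}^{d-1} a_{i,j} T^i$ with $g_j \equiv r_j \pmod{h_2}$ admits a unique representation $g_j = r_j + h_2 q_j$, where $q_j$ has degree at most $d-m$ with prescribed leading coefficient $x_j$ in degree $d-m$. The $n(d-m)$ remaining coefficients of the $q_j$'s trivialize the fiber as $\AA^{n(d-m)}$, and the section $q_j \equiv 0$ globalizes this, exhibiting $\rho$ as a vector bundle of the expected rank. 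Proper base change combined with the well-known vanishing $H^j_c(\AA^{n(d-m)},\Ql) = 0$ for $j \ne 2n(d-m)$ and $H^{2n(d-m)}_c(\AA^{n(d-m)},\Ql) = \Ql(-n(d-m))$ then yields the claimed identification.

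Combining these ingredients, and using the projection formula once more to pull the (locally) constant factor $R\rho_!\Ql$ through $Rp_{2!}'$, gives
$$S_{d,f}|_{U_m^{kd}} = Rp_{2!}' \overline{e}^* \mcL_\psi \otimes \Ql(-n(d-m))[-2n(d-m)].$$
Passing to compactly supported cohomology and then applying the Leray spectral sequence for $\pi_2: U_m^{kd} \to \AA^m$ yields
\begin{align*}
H^i_c(U_m^{kd}, S_{d,f})
&= H^{i-2n(d-m)}_c\bigl(U_m^{kd},\, Rp_{2!}' \overline{e}^* \mcL_\psi\bigr)(-n(d-m)) \\
&= H^{i-2n(d-m)}_c\bigl(\AA^m,\, R\pi_{2!}Rp_{2!}' \overline{e}^* \mcL_\psi\bigr)(-n(d-m)) \\
&= H^{i-2n(d-m)}_c(\AA^m, \overline{S}_{m,f})(-n(d-m)),
\end{align*}
which is the desired formula.

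The main technical obstacle is ensuring that $R\rho_!\Ql$ is the \emph{constant} Tate-twisted shifted sheaf, rather than merely a rank-one local system in the top degree: this is what the explicit global section $q_j \equiv 0$ guarantees. The hypothesis $m \leq d$ enters precisely here, since it ensures both that the relative dimension $n(d-m)$ is non-negative and that the polynomial long division used to build the trivialization goes through.
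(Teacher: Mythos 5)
Your proof is correct and follows essentially the same route as the paper: factor $e = \overline e \circ \rho$ and $p_2 = p_2' \circ \rho$, apply the projection formula to pull the constant factor $R\rho_!\Ql$ through, compute $R\rho_!\Ql$ via polynomial long division and proper base change, and finish with the Leray spectral sequence for $\pi_2$. The one place where your write-up is slightly imprecise is the justification that $R\rho_!\Ql$ is the \emph{constant} complex $\Ql(-n(d-m))[-2n(d-m)]$ rather than a rank-one local system sitting in degree $2n(d-m)$. You attribute this to the existence of the global zero section $q_j \equiv 0$, but a global section alone does not rule out monodromy on $R^{2n(d-m)}\rho_!\Ql$. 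What actually does the job is the fact that the coefficients of the $q_j$ furnish \emph{global polynomial coordinates} on the fibers, depending polynomially on the base point; this is a global \emph{trivialization}, which the paper makes explicit by exhibiting the isomorphism $(\mathbb A^d)^n \times U_m^{kd} \cong (\mathbb A^{d-m})^n \times (\mathbb A^m)^n \times U_m^{kd}$ over which $\rho$ becomes projection onto the last two factors. (Alternatively, one can note that for any algebraic vector bundle the transition functions live in the connected group $GL_r$, so the local system is automatically trivial; but your argument in fact produces a global trivialization, so this softer fact is not needed.) This is a slip of attribution rather than a gap; the rest of the argument, and in particular the role of $m \le d$ in making long division produce a quotient of the right degree with the prescribed leading coefficient, is exactly as in the paper.
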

  
  \begin{proof}  First we will show that \[R \rho_! \Ql = \Ql[2n(d-m)](-n(d-m)).\] To do this, we claim that there exists an isomorphism 
$$
 (\mathbb A^d)^n \times U_{m}^{kd} \cong (\mathbb A^{d-m})^n \times (\mathbb A^m)^n \times U_m^{kd}, 
$$
  whose composition with $\rho$ is the projection onto $ (\mathbb A^m)^n \times U_m^{kd} $.  This isomorphism is defined by the fact that, for a fixed monic polynomial $h_2$ of degree $m$, a polynomial of degree $d\geq m$ with leading coefficient $x_i$ can be written uniquely as a polynomial of degree $<m$ plus $h_2$ times a polynomial of degree $d-m$ with leading coefficient $x_i$. We view the polynomial of degree $<m$ as an element of $\mathbb A^m$ and the polynomial of degree $d-m$ with leading coefficient $x_i$ as an element of $\mathbb A^{d-m}$. The map to $\mathbb A^m \times \mathbb A^{d-m}$ is given by polynomial long division, and hence is a polynomial function of the coefficients, and the inverse map is simply multiplication and addition. The map $\rho$ is given by the residue mod $h_2$, which is exactly the polynomial of degree $<m$, as desired.
  
 By proper base change $R \rho_! \Ql$ is the pullback from a point of the compactly supported cohomology of $(\mathbb A^{d-m})^n = \mathbb A^{n(d-m)}$, which is the pullback of a one-dimensional vector space in degree $2n (d-m)$ with Galois action $\Ql(-n(d-m))$, which is just the  constant complex $ \Ql[2n(d-m)](-n(d-m))$. This establishes the claim.
 
 Next, 
on $U_m^{kd}$ we calculate that 
\begin{align*}
S_{d,f} &= R p_{2!} e^* \mcL_\psi= R p_{2!} \rho^*  \overline{e}^* \mcL_\psi = R p_{2!}' \rho_! \rho^* \overline{e}^* \mcL_\psi ,
\end{align*}
by functoriality.
Appealing to the
projection formula, we deduce that 
\begin{align*}
R p_{2!}' \rho_! \rho^* \overline{e}^* \mcL_\psi
&= Rp_{2!}' \left( \overline{e}^* \mcL_\psi \otimes R \rho_! \Ql\right) &\\&=Rp_{2!}'  \overline{e}^* \mcL_\psi [- 2n(d-m)](-n(d-m)). &
\end{align*}
It follows that 
\begin{align*}
H^i_c( U_m^{kd}, S_{d,f})
&= H^i_c(U_m^{kd}, Rp_{2!}' \overline{e}^* \mcL_\psi [- 2n(d-m)](-n(d-m)))\\
&= H^{i-2n(d-m)}_c(U_m^{kd}, Rp_{2!}' \overline{e}^* \mcL_\psi(-n(d-m))),
\end{align*}
by the Leray spectral sequence with compact supports. But then 
\begin{align*}
H^i_c( U_m^{kd}, S_{d,f})
&= H^{i-2n(d-m)}_c(\mathbb A^m, R \pi_{2!} Rp_{2!}' \overline{e}^* \mcL_\psi)(-n(d-m))\\
&= H^{i-2n(d-m)}_c(\mathbb A^m, \overline{S}_{m,f})(-n(d-m)),
\end{align*}
as required.
\end{proof}

Next, in Lemma~\ref{factorisation}, we will show that the complexes $\overline{S}_{m,f}$ enjoy a factorisation property. This is analogous to the multiplicativity property of the corresponding exponential sum in the classical arithmetic  setting. In calculating a multiplicative function, we can typically reduce to the case of prime powers, which here would correspond to powers of irreducible polynomials. In fact, in our setting we may reduce to powers of polynomials of degree $1$. In Lemma~\ref{powervanishing}, we deal with powers of degree greater than $1$. Thus it remains to calculate the degree $1$ case $\overline{S}_{1,f}$.  This we accomplish in Lemma~\ref{mainterm}.  Building on this, in Lemma~\ref{residuecalculation} we are able to calculate $H^*_c ( \mathbb A^m, \overline{S}_{m,f}) $, which allows us in Corollaries~\ref{degreemcalculation} and~\ref{degreemsmooth} to determine the cohomological contribution of $A^{kd}_m - A^{kd}_{m-1}$.

\begin{lemma}\label{factorisation} Let $m_1,m_2\in \NN$  and let $V$ be the moduli space of pairs of coprime monic  polynomials $l_1, l_2$ 
such that $\deg(l_i)=m_i$ for $i=1,2$. Let $f_1: V \to \mathbb A^{m_1}$
(resp.~$f_2: V \to \mathbb A^{m_2}$,  $f_{12}: V \to \mathbb A^{m_1+m_2}$) be the maps sending $(l_1,l_2)$ to $l_1$ (resp.~$l_2$, $l_1l_2$).  
Then $$f_{12}^* \overline{S}_{m_1+m_2,f} = f_1^* \overline{S}_{m_1,f} \otimes f_2^* \overline{S}_{m_2,f}.$$
If $m_1=m_2$, then we may take this isomorphism to commute with the action of the involution switching $l_1$ and $l_2$ on both sides. 
\end{lemma}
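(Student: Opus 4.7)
The plan is to establish the desired isomorphism via base change, the Chinese Remainder Theorem, and a partial fractions identity showing that the phase $\overline{e}$ splits additively under the CRT decomposition.

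First, I would apply base change for $R(\pi_2)_!$ along $f_{12}$, reducing the computation of $f_{12}^* \overline{S}_{m_1+m_2, f}$ to pushing forward $\overline{e}^* \mcL_\psi$ along the natural map
\[ Z := (\AA^n)^{m_1+m_2} \times_{\AA^{m_1+m_2}} \bigl( U_{m_1+m_2}^{kd} \times_{\AA^{m_1+m_2}} V \bigr) \to V, \]
where over a point $(l_1, l_2) \in V$ the fibre parametrises $n$-tuples of polynomials of degree $<m_1+m_2$ together with a polynomial $h_1$ of degree $<m_1+m_2$ coprime to $l_1 l_2$. Using the pair-space description of $U_m^{kd}$ from Lemma~\ref{rationalisation}, the Chinese Remainder Theorem supplies a $V$-isomorphism
\[ Z \cong Z_1 \times_V Z_2, \]
where $Z_i := V \times_{\AA^{m_i}} \bigl((\AA^n)^{m_i} \times U_{m_i}^{kd}\bigr)$ parametrises over $(l_1,l_2)$ the tuple of polynomials of degree $<m_i$ together with $h_1^{(i)}$ of degree $<m_i$ coprime to $l_i$. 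This isomorphism is algebraic via Euclid's algorithm, since the $l_i$ are monic, and respects coprimality because $\gcd(h_1, l_1 l_2) = 1$ iff $\gcd(h_1, l_i) = 1$ for both $i$.

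The key step is to verify that under this identification the pulled-back phase $\overline{e}$ equals the sum $\overline{e}_1 + \overline{e}_2$ of the two factor phases. Given $(l_1, l_2)$ coprime and $h_1$ coprime to $l_1 l_2$, I would write the partial fraction decomposition $\frac{h_1}{l_1 l_2} = \frac{A}{l_1} + \frac{B}{l_2}$ with $\deg A < m_1$ and $\deg B < m_2$; explicitly $A \equiv h_1 l_2^{-1} \pmod{l_1}$ and $B \equiv h_1 l_1^{-1} \pmod{l_2}$, matching the CRT coordinates of $h_1$ up to the fixed invertible factors $l_2^{-1}\bmod l_1$ and $l_1^{-1}\bmod l_2$. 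For any tuple $(g_1,\dots,g_n)$ the difference $f(g_1,\dots,g_n) - f(g_1\bmod l_1,\dots, g_n\bmod l_1)$ is divisible by $l_1$, so $\frac{A}{l_1}$ times this difference is a polynomial and contributes nothing to the coefficient of $T^{-1}$; the same argument handles the $l_2$ term. Hence the $T^{-1}$ coefficient of $\frac{h_1}{l_1 l_2} f(g_1,\dots,g_n)$ splits as a sum of the analogous coefficients involving only residues modulo $l_1$ and modulo $l_2$, which is exactly $\overline{e}_1 + \overline{e}_2$.

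The proof is then completed by invoking the fact that $\mcL_\psi$ is an additive character sheaf, so $(\overline{e}_1 + \overline{e}_2)^* \mcL_\psi \cong \overline{e}_1^* \mcL_\psi \otimes \overline{e}_2^* \mcL_\psi$, and applying the K\"unneth formula for $Rp_!$ to factor the pushforward from $Z_1 \times_V Z_2$ down to $V$ as a tensor product of the pushforwards from $Z_1$ and $Z_2$; these latter pushforwards are by construction $f_1^* \overline{S}_{m_1,f}$ and $f_2^* \overline{S}_{m_2,f}$. The entire construction is manifestly symmetric in $(l_1, l_2)$, so when $m_1 = m_2$ the resulting isomorphism intertwines the swap involutions on both sides. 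The main obstacle is bookkeeping: keeping track of the CRT identification of auxiliary spaces and verifying that partial fractions factorises $\overline{e}$ cleanly through the two projections. This decomposition is the geometric incarnation of the multiplicativity of exponential sums over residue classes that appears in the classical circle method.
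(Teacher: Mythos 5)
Your proposal is correct and follows essentially the same route as the paper's proof: both identify the two sides via proper base change and the K\"unneth formula as pushforwards from auxiliary spaces, match those spaces by the Chinese Remainder Theorem (with the $h_1$-coordinate decomposed by partial fractions, i.e.\ $h_1 = h_{1,1}l_2 + h_{1,2}l_1$), and check that the phase splits additively because $f(g_1,\dots,g_n)$ is congruent to $f(g_1\bmod l_i,\dots,g_n\bmod l_i)$ modulo $l_i$, so the discrepancy contributes only a polynomial with vanishing $T^{-1}$ coefficient. The symmetry argument for $m_1=m_2$ also matches the paper's.
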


In the $m_1=m_2$ case, the action of the involution on the right hand side follows  the standard convention for switching the two sides of a tensor product of complexes (which acts on the $i$th homology of the first complex tensor the $j$th homology of the second complex by the obvious action times $(-1)^{ij}$). 

\begin{proof}[Proof of Lemma~\ref{factorisation}] 
Applying proper base change, we see that the left hand side is the compactly supported pushforward to $V$ from the space of pairs of $h_1$ a polynomial of degree $<m_1+m_2$, relatively prime to $l_1 l_2$, and a tuple $(g_1,\dots,g_n)$ of polynomials of degree $<m_1+m_2$ of the pullback of $\mcL_\psi$ along the map defined by the coefficient of $T^{-1}$ in \[ \frac{h_1(T)}{l_1(T) l_2(T) }  f\left(  g_1,\dots,g_n \right).\]  

Applying proper base change and the K\"{u}nneth formula, the right hand side is the compactly supported pushforward to $V$ from the space of quadruples of $h_{1,1}$ a polynomial of degree $<m_1$, relatively prime to $l_1$, $h_{1,2}$ a polynomial of degree $<m_2$, 
relatively prime to $l_2$, 
$(g_{1,1},\dots,g_{n,1})$ a tuple of polynomials of degree $<m_1$, and $(g_{1,2},\dots,g_{n,2})$ a tuple of polynomials of degree $<m_2$ of the pullback of $\mcL_\psi$ along the map defined by the coefficient of $T^{-1}$ in \[ \frac{h_{1,1}(T)}{l_1(T) }  f\left(  g_{1,1},\dots,g_{n,1} \right)+\frac{h_{1,2}(T)}{l_2(T) }  f\left(  g_{1,2},\dots,g_{n,2} \right). \]  
We will show that these varieties are actually isomorphic in a way preserving their projections to $V$ and preserving the maps on which we are pulling back $\mcL_\psi$. This is sufficient to imply the isomorphism of compactly supported pushforwards. 

To write a map from the second to the first, let $g_i$ be the unique polynomial of degree $<m_1+m_2$ that is congruent to $g_{i,1}\bmod{l_1}$ and congruent to $g_{i,2}\bmod{l_2}$. This can be written explicitly as $l_2 ( g_{i,1} l_2^{-1} \bmod{ l_1}) + l_1( g_{i,2} l_1^{-1} \bmod{l_2})$, where the inverses are understood to be modulo $l_1$ and $l_2$, respectively. This is a polynomial function on this moduli space of tuples of polynomials. (Here we use the fact that $l_1$ and $l_2$ are relatively prime to make their inverses modulo each other polynomial, together with Euclid's algorithm and the fact that they are monic to make the modulo operation polynomial.)  
 Let $h_1 = h_{1,1}l_2 + h_{1,2}l_1$. Then the following identities hold in the group of formal Laurent series in $T^{-1}$ modulo polynomials in $T$:
\begin{align*}
 \frac{h_{1,1}(T)}{l_1(T) }  &f\left(  g_{1,1},\dots,g_{n,1} \right)+\frac{h_{1,2}(T)}{l_2(T) }  f\left(  g_{1,2},\dots,g_{n,2}\right)\\
 & =  \frac{h_{1,1}(T)}{l_1(T) }  f\left(  g_{1},\dots,g_{n} \right)+\frac{h_{1,2}(T)}{l_2(T) }  f\left(  g_{1},\dots,g_{n} \right) \\
 &= \frac{h_{1,1}(T)l_2(T) + h_{1,2}(T) l_1(T)}{l_1(T) l_2(T)}f\left(  g_{1},\dots,g_{n}\right)\\
 &= \frac{h_1(T)}{l_1(T) l_2(T) }f\left(  g_{1},\dots,g_{n} \right), 
 \end{align*}  
and so the coefficients of $T^{-1}$ on both sides are equal as desired.
Furthermore, it is easy to check that 
 the inverse to this map is the map that sets $g_{i,1}$ to the remainder of $g_i$ modulo $l_1$, $g_{i,2}$ the remainder of $g_i$ modulo $l_2$,  $h_{1,1} = h_1 l_2^{-1} \mod l_1$ and $h_{1,2} = h_1 l_1^{-1} \mod l_2$.
 
Since  this isomorphism between the underlying spaces commutes with the involution switching $h_1$ and $h_2$, the symmetry of the final isomorphism follows from the symmetry of definition of the K\"{u}nneth formula. \end{proof}
 
 We take advantage of the fact that $X$ is smooth to show cancellation in the exponential sums associated to non-squarefree moduli. This is the only place in the proof of Corollary~\ref{degreemcalculation} where the smoothness of $X$ is used. However, it is needed again in Lemma~\ref{supersmooth} to prove Proposition~\ref{majormain}.  
 
 \begin{lemma}\label{powervanishing} Let $h_2$ be a monic polynomial of degree $m$ that is not squarefree. Then the stalk of $\overline{S}_{m,f}$ at $h_2$ vanishes. \end{lemma}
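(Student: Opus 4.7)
The plan is to apply Lemma~\ref{phasecancellation} twice, coupled with a stratification of the relevant variety and a Mayer--Vietoris argument. By proper base change, the stalk of $\overline S_{m,f}$ at $h_2$ is $H^*_c(V, \phi^*\mcL_\psi)$, where $V$ parametrises tuples $(g_1,\dots,g_n,h_1)$ with $\deg(g_j),\deg(h_1)<m$ and $\gcd(h_1,h_2)=1$, and where $\phi$ sends such a tuple to the coefficient of $T^{-1}$ in $\frac{h_1(T)\,f(g_1,\dots,g_n)(T)}{h_2(T)}$. Since $h_2$ is not squarefree, write $h_2=\ell^{a}h$ with $\ell$ irreducible, $a\geq 2$, and $\gcd(\ell,h)=1$, and set $f_j=\partial f/\partial x_j$.

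First I would handle the open locus $V_1\subset V$ where some $f_j(g)\not\equiv 0\pmod\ell$. For each $j\in\{1,\dots,n\}$ and each polynomial $q$ of degree less than $\deg\ell$, consider the $\mathbb{G}_a$-action on $V$ that sends $g_j$ to $g_j+\lambda\, q(T)(h_2/\ell)$ and leaves all other coordinates fixed. In the Taylor expansion of $f$ along this shift, which makes sense because $\chaar(\FF_q)>k$, every term of order $i\geq 2$ carries a factor $(h_2/\ell)^i$; since $a\geq 2$ gives $\ell^i\mid h_2^{i-1}$ for all such $i$, this factor is divisible by $h_2$, so after multiplication by $h_1$ and division by $h_2$ the term becomes a polynomial in $T$ with no coefficient of $T^{-1}$. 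The phase therefore changes linearly in $\lambda$ with coefficient
\[ u_{j,q}(g)\;=\;\text{coefficient of }T^{-1}\text{ in }\frac{h_1(T)\,q(T)\,f_j(g)(T)}{\ell(T)}, \]
which is the residue pairing $\langle h_1 q,f_j(g)\rangle_\ell$ on $\FF_q[T]/\ell$. Because this pairing is nondegenerate and $h_1$ is a unit modulo $\ell$, there exists some $(j,q)$ with $u_{j,q}(g)\neq 0$ exactly when $(g,h_1)\in V_1$. Crucially, $h_2/\ell\equiv 0\pmod\ell$ when $a\geq 2$, so the shift leaves $g_j\bmod\ell$ unchanged and each open set $\{u_{j,q}\neq 0\}$ is preserved by its own action, allowing Lemma~\ref{phasecancellation} to force $H^*_c(\{u_{j,q}\neq 0\},\phi^*\mcL_\psi)=0$. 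The same observation shows that the action preserves every finite intersection of such opens, so the Mayer--Vietoris spectral sequence applied to the finite cover $V_1=\bigcup_{j,q}\{u_{j,q}\neq 0\}$ yields $H^*_c(V_1,\phi^*\mcL_\psi)=0$.

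Next I would treat the closed complement $Z=V\setminus V_1$, on which $f_j(g)\equiv 0\pmod\ell$ for every $j$. Evaluating at any root $\alpha$ of $\ell$, the point $(g_1(\alpha),\dots,g_n(\alpha))$ lies in $V(f_1,\dots,f_n)\subset\AA^n$; the smoothness of $X=V(f)$ forces $V(f,f_1,\dots,f_n)=\emptyset$, so $f(g)(\alpha)\neq 0$, hence $\ell\nmid f(g)$, and the assumption $a\geq 2$ upgrades this to $f(g)\not\equiv 0\pmod{\ell^{a-1}}$. I now introduce a complementary $\mathbb{G}_a$-action on $V$ that shifts $h_1$ to $h_1+\lambda\,q'(T)\,\ell h$ for $q'$ of degree less than $(a-1)\deg\ell$; this preserves $\gcd(h_1,h_2)=1$ because the shift vanishes modulo both $\ell$ and $h$, and it leaves the stratum $Z$ invariant. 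The phase changes linearly in $\lambda$ with coefficient $v_{q'}(g)$ equal to the coefficient of $T^{-1}$ in $\frac{q'(T)\,f(g)(T)}{\ell(T)^{a-1}}$, namely the residue pairing $\langle q',f(g)\rangle_{\ell^{a-1}}$ on $\FF_q[T]/\ell^{a-1}$. Nondegeneracy of this pairing, combined with $f(g)\not\equiv 0\pmod{\ell^{a-1}}$, guarantees that some $v_{q'}$ is nonzero at each point of $Z$. Lemma~\ref{phasecancellation} and Mayer--Vietoris once more yield $H^*_c(Z,\phi^*\mcL_\psi)=0$, and excision for the pair $(V_1,Z)$ completes the proof.

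The main obstacle is the singular stratum $Z$: the most natural shift, namely translation of a single $g_j$, loses all first-order information there, so one must locate a second, algebraically independent source of cancellation by shifting $h_1$ instead. This complementary action is available precisely because the smoothness of $X$ guarantees $\ell\nmid f(g)$ on $Z$, and the multiplicity hypothesis $a\geq 2$ then promotes this to the nonvanishing modulo $\ell^{a-1}$ that the residue pairing requires.
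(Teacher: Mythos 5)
Your proof is correct, and it reaches the conclusion by a genuinely different route from the paper's. The paper first invokes Lemma~\ref{factorisation} to reduce to the case $h_2=(T-x)^m$, then performs a base change along the evaluation-at-$x$ map $r:V\to\AA^n$ so that the dichotomy ``$f$ vanishes at the evaluation point or not'' is constant on each fibre; on each fibre a single explicit $\mathbb G_a$-shift (of some $g_j$ by $\lambda(T-x)^{m-1}$ in the first case, of $h_1$ by $\lambda(T-x)^{m-1}$ in the second) then kills the cohomology via Lemma~\ref{phasecancellation}. You instead keep a general non-squarefree $h_2=\ell^a h$, replace the base change by a direct stratification of $V$ into the open locus where some $\partial f/\partial x_j(g)\not\equiv 0\bmod\ell$ and its closed complement, and on each stratum exhibit a whole family of $\mathbb G_a$-actions indexed by $q$ (resp.\ $q'$), using nondegeneracy of the residue pairings modulo $\ell$ and $\ell^{a-1}$ to see that the associated linear coefficients cannot all vanish; a Mayer--Vietoris/excision argument then assembles the vanishing. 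The underlying mechanism is identical in both proofs --- two complementary $\mathbb G_a$-actions, one exploiting a nonvanishing partial derivative and one exploiting nonvanishing of $f$ itself, with smoothness of $X$ guaranteeing that at least one is available, and with the hypothesis $a\geq 2$ ensuring that the higher-order Taylor terms (resp.\ the coprimality of $h_1$ with $h_2$) cause no trouble. What your version buys is independence from the multiplicativity statement of Lemma~\ref{factorisation}; what the paper's version buys is lighter bookkeeping, since after reducing to $(T-x)^m$ and base-changing along $r$ a single shift suffices on each fibre and no covering argument is needed. All the delicate points in your argument check out: the terms of order $i\geq 2$ are indeed divisible by $h_2$ because $a(i-1)\geq i$ for $a,i\geq 2$; the shifts preserve the degree bounds, the coprimality of $h_1$ with $h_2$, and each of the open sets in your covers (so Lemma~\ref{phasecancellation} genuinely applies to every finite intersection); and the residue pairings are nondegenerate for any monic modulus.
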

 
 \begin{proof}   By Lemma~\ref{factorisation} it suffices to handle the case where $h_2 = (T-x)^m$ is a power of a linear polynomial.  By proper base change, the stalk of $\overline{S}_{m,f}$ at this point is the cohomology with compact supports of the space of tuples 
$(
(a_{0,j}, \dots,a_{m-1,j})_{1\leq j\leq n}
,h_1)$,
where $h_1$ is a polynomial  of degree $<m$ that is  coprime to $T-x$, of the cohomology of $\overline{e}^* \mathcal L_\psi$, where we recall that $\overline{e}$ sends a tuple to the coefficient of $T^{-1}$ in \[ \frac{h_1(T) }{h_2(T) }  f\left(    \sum_{i=0}^{m-1} a_{i,1} T^i, \dots,      \sum_{i=0}^{m-1} a_{i,n} T^i \right).\]
By a further proper base change, we can consider the map $r$ from this space to $\mathbb A^n$ defined by the coordinates $(\sum_{i=0}^{m-1} a_{i,1} x^i,\dots, \sum_{i=0}^{m-1}a_{i,n} x^i)$. It is sufficient to show that the stalk of the compactly supported pushforward  
$Rr_! \overline{e}^* \mathcal L_\psi$ vanishes everywhere.  This stalk is the cohomology with compact supports of the same variety with coefficients in the same sheaf, but with the tuple  $(\sum_{i=0}^{m-1} a_{i,1} x^i,\dots, \sum_{i=0}^{m-1}a_{i,n} x^i)$ restricted to fixed values. 
 
 We split into two cases, according to whether or not  
 $$
 f\left(\sum_{i=0}^{m-1} a_{i,1} x^i,\dots, \sum_{i=0}^{m-1}a_{i,n} x^i\right)
 $$ 
is zero.
Suppose first that it is zero.
Then it follows from the smoothness of $X$ that  
$\frac{\partial f}{\partial x_{j}}(\sum_{i=0}^{m-1} a_{i,1} x^i,\dots, \sum_{i=0}^{m-1}a_{i,n} x^i)\neq 0$
for some $j$. 
Consider the automorphism 
$$
a_{i,j}\mapsto 
a_{i,j} +  
\lambda{m-1 \choose i} (-x)^{m-1-i} .
$$
It acts on $\sum_{i=0}^{m-1} a_{i,j} T^i$ by adding $\lambda (T-x)^{m-1}$.
Thus it acts on $$f\left(    \sum_{i=0}^{m-1} a_{i,1} T^i, \dots,      \sum_{i=0}^{m-1} a_{i,n} T^i \right) 
$$
by adding $\lambda  (T-x)^{m-1}\frac{\partial f}{\partial x_{j} }(\sum_{i=0}^{m-1} a_{i,1} x^i,\dots, \sum_{i=0}^{m-1}a_{i,n} x^i)$ plus higher powers of $T-x$. In particular,  it acts on \[\frac{h_1(T) }{h_2(T) }  f\left(    \sum_{i=0}^{m-1} a_{i,1} T^i, \dots,      \sum_{i=0}^{m-1} a_{i,n} T^i \right)\] by adding 
\[\lambda \frac{h_1(T)}{T-x} \frac{\partial f}{\partial x_{j}} \left(\sum_{i=0}^{m-1} a_{i,1} x^i,\dots, \sum_{i=0}^{m-1}a_{i,n} x^i\right) 
\] 
plus a power series in $T$. Thus it acts on $\overline{e} (
(a_{0,j}, \dots,a_{m-1,j})_{1\leq j\leq n}
,h_1)$
by adding $\lambda h_1(x) \frac{\partial f}{\partial x_{j}}$. Because $h_1$ is prime to $T-x$, this is a non-zero multiple of $\lambda$, and so the compactly supported cohomology vanishes by Lemma~\ref{phasecancellation}.
 
 If, on the other hand,  $f(\sum_{i=0}^{m-1} a_{i,1} x^i,\dots, \sum_{i=0}^{m-1}a_{i,n} x^i)\neq 0$, then we consider the automorphism  $h_1(T)\mapsto h_1(T) + \lambda (T-x)^{m-1}$.  
 Recalling that $h_2(T)=(T-x)^m$,
 this automorphism acts on  \[ \frac{h_1(T) }{h_2(T) }  f\left(    \sum_{i=0}^{m-1} a_{i,1} T^i, \dots,   
 \sum_{i=0}^{m-1} a_{i,n} T^i\right)\] by adding 
 \begin{align*}
\lambda \frac{1}{T-x}  f\left(    \sum_{i=0}^{m-1} a_{i,1} T^i, \dots,   \sum_{i=0}^{m-1} a_{i,n} T^i\right)
&=  \lambda f\left(    \sum_{i=0}^{m-1} a_{i,1} x^i, \dots,      \sum_{i=0}^{m-1} a_{i,n} x^i\right),
\end{align*}
plus a power series in $T$. Hence the coefficient of $T^{-1}$ is a non-zero multiple of $\lambda$ and the compactly supported cohomology vanishes by Lemma~\ref{phasecancellation}.
\end{proof}

 Let $H^{*,\red}_c(X_{\bar \FF_q},\Ql)$ be the mapping cocone of the trace map from $H^*_c(X_{\bar \FF_q},\Ql) $ to $ \Ql[-2(n-1)](-(n-1))$, which exists because $X$ is a variety of dimension $n-1$. In the next two results, we will show that $H^{*,\red}_c(X_{\bar \FF_q},\Ql)$ is the solution to a sheaf cohomology problem that turns out to be precisely what is needed to calculate $\overline{S}_{1,f}$

 \begin{lemma}\label{redcoh}Assume that $X$ is irreducible and that $\chaar(\mathbb F_{q})>k$. Then the compactly supported cohomology of $\mathbb G_m \times \mathbb A^n$ with coordinates $h,a_1,\dots,a_n$, with coefficients in $\mcL_\psi ( h f(a_1,\dots,a_n))$, is $H^{*,\red}_c(X_{\bar \FF_q},\Ql) [-2](-1)$. \end{lemma}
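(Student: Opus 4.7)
The plan is to reduce the problem to a cohomology computation on $\mathbb{A}^1 \times \mathbb{A}^n$ via excision, and then to identify the resulting distinguished triangle with the defining triangle of $H^{*,\red}_c(X_{\bar\FF_q}, \Ql)$.

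First I would consider the open--closed decomposition of $\mathbb{A}^1 \times \mathbb{A}^n$ into the open complement $\mathbb{G}_m \times \mathbb{A}^n$ and the closed stratum $i: \{0\} \times \mathbb{A}^n \hookrightarrow \mathbb{A}^1 \times \mathbb{A}^n$. Since $\mcL_\psi(hf)|_{h=0} = \mcL_\psi(0) = \Ql$, excision produces a distinguished triangle
\[
H^*_c(\mathbb{G}_m \times \mathbb{A}^n, \mcL_\psi(hf)) \to H^*_c(\mathbb{A}^1 \times \mathbb{A}^n, \mcL_\psi(hf)) \to H^*_c(\mathbb{A}^n, \Ql),
\]
whose right-hand term is $\Ql(-n)[-2n]$.

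Next I would compute the middle term by projecting. Let $\tilde p : \mathbb{A}^1 \times \mathbb{A}^n \to \mathbb{A}^n$ be the second projection. Proper base change identifies the stalks of $R\tilde p_! \mcL_\psi(hf)$ with $H^*_c(\mathbb{A}^1, \mcL_\psi(hf(a)))$: when $f(a) \neq 0$ this vanishes, since $h \mapsto hf(a)$ is a non-constant linear function on $\mathbb{A}^1$ and the compactly supported cohomology of a nontrivial Artin--Schreier sheaf on $\mathbb{A}^1$ is zero (this can also be packaged through Lemma~\ref{phasecancellation}); when $f(a) = 0$ the sheaf trivializes and the stalk is $\Ql(-1)[-2]$. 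Therefore $R\tilde p_! \mcL_\psi(hf)$ is supported on $X$, and a further base change along the closed inclusion $\bar i: X \hookrightarrow \mathbb{A}^n$ identifies it with $\bar i_* \Ql(-1)[-2]$. Taking global compactly supported cohomology yields $H^*_c(\mathbb{A}^1 \times \mathbb{A}^n, \mcL_\psi(hf)) = H^*_c(X_{\bar\FF_q}, \Ql)(-1)[-2]$, so the triangle becomes
\[
H^*_c(\mathbb{G}_m \times \mathbb{A}^n, \mcL_\psi(hf)) \to H^*_c(X_{\bar\FF_q}, \Ql)(-1)[-2] \xrightarrow{\lambda} \Ql(-n)[-2n].
\]

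The final step is to identify $\lambda$, up to a non-zero scalar, with the trace map $H^*_c(X_{\bar\FF_q}, \Ql) \to \Ql(-(n-1))[-2(n-1)]$, shifted by $[-2](-1)$. The adjunction $\bar i_* \dashv \bar i^{!}$ together with purity (since $X \subset \mathbb{A}^n$ is a smooth codimension-one closed immersion, $\bar i^{!} \Ql = \Ql(-1)[-2]$) gives
\[
\Hom_{D^b(\mathbb{A}^n)}(\bar i_* \Ql(-1)[-2], \Ql) = \Hom_{D^b(X)}(\Ql(-1)[-2], \Ql(-1)[-2]) = H^0(X_{\bar\FF_q}, \Ql) = \Ql,
\]
using the irreducibility and hence connectedness of $X$. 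Thus $\lambda$ is automatically a scalar multiple of the trace map, and the main technical obstacle is to verify that this scalar is non-zero. Since this is not detected on stalks (source and target live in different cohomological degrees at each point of $X$), I would pin the scalar down by explicitly unwinding the adjunction unit $\mcL_\psi(hf) \to i_* i^* \mcL_\psi(hf)$ against the purity isomorphism, using that the unit is non-trivial on each fiber $\mathbb{A}^1 \times \{a\}$ over $a \in X$ (where it is the standard restriction-to-the-origin map of sheaves on $\mathbb{A}^1$). Once $\lambda$ is identified with the (shifted) trace, its fiber $H^*_c(\mathbb{G}_m \times \mathbb{A}^n, \mcL_\psi(hf))$ is by definition $H^{*,\red}_c(X_{\bar\FF_q}, \Ql)(-1)[-2]$, completing the proof.
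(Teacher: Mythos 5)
Your excision triangle, the computation of the middle term via $R\tilde p_!\,\mcL_\psi(hf) = \bar i_*\,\Ql(-1)[-2]$, and the identification of the right-hand term with $\Ql(-n)[-2n]$ all match the paper exactly (the paper states the middle term more tersely, but the mechanism is the same). The place you diverge, and where the gap lies, is the final step of identifying $\lambda$ with a nonzero multiple of the trace map.

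Your $\Hom$-computation is a conceptually attractive way to see that $\lambda$ \emph{is} a scalar multiple of the trace, but note (a) it relies on purity of $\bar i^!\Ql$, which needs $X$ smooth, whereas the lemma only assumes $X$ irreducible; and (b) it is strictly more than is needed, since the target $\Ql[-2n](-n)$ is concentrated in a single degree and one-dimensional there, so \emph{any} morphism $H^*_c(X,\Ql)[-2](-1)\to\Ql[-2n](-n)$ is automatically determined by its degree-$2n$ component and hence already a scalar multiple of the trace. The genuine content of the lemma is therefore exactly what you flag as "the main technical obstacle": showing that this scalar is nonzero. Your proposed resolution by unwinding the adjunction unit against purity does not, as stated, detect the scalar: as you yourself observe, the restriction map is zero on every stalk for degree reasons, so the fibrewise nontriviality of the unit $\mcL_\psi(hf)\to i_*\Ql$ carries no information about the adjoint map $\Ql(-1)[-2]\to\bar i^!\Ql$ after pushing forward. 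Some global input is required, and the trivial bound $H^i_c(\mathbb G_m\times\mathbb A^n,\mcL_\psi(hf))=0$ for $i>2(n+1)$ from the dimension of $\mathbb G_m\times\mathbb A^n$ is not enough (it leaves degree $2n+1$ unconstrained, which is exactly where a splitting of the triangle would show up). The paper's proof closes this gap by proving that $H^i_c(\mathbb G_m\times\mathbb A^n,\mcL_\psi(hf))$ vanishes for $i>2n$: it fibres over $\mathbb A^{n-1}$ in the variables $a_2,\dots,a_n$, uses the cohomological-dimension bound $4$ for all fibres, and for \emph{generic} fibres improves this to $2$ by observing that the relevant complex on $\mathbb G_m$ is (a shift of) the Fourier transform of $f_*\Ql$, which by Katz's theorem on Fourier transforms of middle extensions has no constant quotient and hence vanishing $H^2_c$. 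Once this vanishing in degrees $>2n$ is in hand, the long exact sequence forces surjectivity of $\lambda$ in degree $2n$, which is the missing nonvanishing. Without this (or a comparable) argument, your proof is incomplete.
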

 
 \begin{proof}
By excision, there is a long exact triple 
\begin{align*}
H^*_c ( \mathbb G_m \times \mathbb A^n,\mcL_\psi ( h f(a_1,\dots,a_n)))
&\to H^*_c ( \mathbb A^1 \times \mathbb A^n,\mcL_\psi ( h f(a_1,\dots,a_n)))\\ 
&\to H^*_c (  \mathbb A^n,\mcL_\psi ( 0 f(a_1,\dots,a_n))),
\end{align*}
where $\mcL_\psi ( 0 f(a_1,\dots,a_n))$ is the constant sheaf obtained by pullback 
to the hyperplane $h=0$ 
of the sheaf 
$\mcL_\psi ( h f(a_1,\dots,a_n))$.
The middle complex is  $H^*_c(X_{\bar \FF_q},\Ql) [-2](-1)$ and the 
 third complex is equal to $\Ql[-2n](-n)$. It remains to check that this map is  a non-zero multiple of the trace map, which we do by 
 bounding the degrees in which
 $H^*_c ( \mathbb G_m \times \mathbb A^n,\mcL_\psi ( h f(a_1,\dots,a_n)))$ is non-vanishing. 

Assume without loss of generality that $f$ actually depends on $a_1$. We will show that for generic $a_2,\dots,a_{n-1}$, the compactly supported cohomology of $\mathbb G_m \times \AA^1$ 
with coefficients in $\mcL_\psi(h f(a_1,a_2\dots,a_n))$ vanishes in degree greater than $2$, and for arbitrary $a_2,\dots,a_{n-1}$ it vanishes in degree greater than $4$.

The second statement is simply a consequence of the cohomological dimension of $\mathbb G_m \times \mathbb A^1$. 

For the first statement, we have some polynomial $f(a_1,a_2,\dots,a_n)$ of $a_1$, of degree $<p$, which because $a_2,\dots,a_n$ are generic is non-constant. Hence, on taking the cohomology along $\mathbb A^1$ of the corresponding Artin--Schreier sheaf, we obtain a complex supported in degree $1$. Furthermore, we can represent this complex as the Fourier transform of $f_* \mathbb Q_\ell$, shifted by $1$. Because $f_* \mathbb Q_\ell$ is a middle extension sheaf, its Fourier transform does not have a constant sheaf as a quotient by \cite[8.2.5(2)]{GKSM}, and hence its compactly supported cohomology is supported in degree $2$, as desired. 

It follows from these calculations, and by using  the cohomological dimension of $\mathbb A^{n-1}$, that $H^*_c ( \mathbb G_m \times \mathbb A^n,\mcL_\psi ( h f(a_1,\dots,a_n)))$ is supported in degrees $\leq 2(n-1) + 2=2n$ or $\leq 2(n-2)+4=2n$.
Hence the map in degree $2n$ from $H^*_c(X_{\bar \FF_q},\Ql) [-2](-1)$ to $\Ql[-2n](-n)$ is surjective, whence an isomorphism. Thus it must be a non-zero multiple of the trace map.
\end{proof}
 
\begin{lemma}\label{mainterm}  Assume that $X$ is irreducible and that $\chaar(\mathbb F_{q})>k$. Then $\overline{S}_{1,f}$ on $\mathbb A^1$ is the constant complex $H^{*,\red}_c(X_{\bar \FF_q},\Ql) [-2](-1)$. \end{lemma}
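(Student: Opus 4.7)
The plan is to unwind the definition of $\overline{S}_{1,f}$ and reduce it directly to the computation carried out in Lemma~\ref{redcoh}. By Lemma~\ref{rationalisation}, the open set $U_1^{kd}$ is isomorphic to the space of pairs $(h_1,h_2)$ where $h_2 = T-x$ is monic of degree $1$ and $h_1$ is a constant coprime to $T-x$, i.e.\ a non-zero constant. Thus $U_1^{kd} \cong \mathbb G_m \times \mathbb A^1$ with coordinates $(h_1,x)$, and under this identification the map $\pi_2: U_1^{kd} \to \mathbb A^1$ sending $(h_1, T-x) \mapsto T-x$ is just the projection onto the $x$-factor.

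Next I would compute $\overline{e}$ in the case $m=1$. By definition it sends $((a_1,\dots,a_n), h_1, x) \in \mathbb A^n \times U_1^{kd}$ to the coefficient of $T^{-1}$ in $\frac{h_1}{T-x}\, f(a_1,\dots,a_n)$. Expanding $\frac{1}{T-x} = \sum_{r\geq 1} x^{r-1} T^{-r}$ as a power series in $T^{-1}$, this coefficient is simply $h_1 f(a_1,\dots,a_n)$, which is \emph{independent of $x$}. Consequently, $\overline{e}^*\mcL_\psi$ is the pullback from $\mathbb A^n \times \mathbb G_m$ of the sheaf $\mcL_\psi(h_1 f(a_1,\dots,a_n))$ along the projection that forgets $x$.

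Since the composition $\pi_2 \circ p_2': \mathbb A^n \times \mathbb G_m \times \mathbb A^1 \to \mathbb A^1$ is simply the projection onto the last factor, proper base change applied to the Cartesian square expressing this projection as the pullback of $\mathbb A^n \times \mathbb G_m \to \mathrm{pt}$ along $\mathbb A^1 \to \mathrm{pt}$ shows that $\overline{S}_{1,f} = R\pi_{2!} Rp'_{2!}\, \overline{e}^*\mcL_\psi$ is the constant complex on $\mathbb A^1$ whose stalk is the compactly supported cohomology of $\mathbb G_m \times \mathbb A^n$ with coefficients in $\mcL_\psi(h_1 f(a_1,\dots,a_n))$. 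By Lemma~\ref{redcoh}, this cohomology equals $H^{*,\red}_c(X_{\bar \FF_q},\Ql)[-2](-1)$, giving the desired identification. There is essentially no obstacle here: once the base case $m=1$ is pinned down, the argument is a direct unwinding, with all of the substantive content hidden inside Lemma~\ref{redcoh}.
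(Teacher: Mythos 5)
Your proof is correct and follows essentially the same route as the paper's: identify $U_1^{kd}$ with $\mathbb G_m \times \mathbb A^1$, observe that the coefficient of $T^{-1}$ in $\frac{h}{T-x}f(a_1,\dots,a_n)$ equals $hf(a_1,\dots,a_n)$ and is independent of $x$, conclude by proper base change that $\overline{S}_{1,f}$ is constant, and then invoke Lemma~\ref{redcoh}. Your version is slightly more explicit about citing Lemma~\ref{rationalisation} for the identification of $U_1^{kd}$ and about the Cartesian square underlying the base-change argument, but the substance is identical.
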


\begin{proof} First we show that $\bar{S}_{1,f}$ is a constant complex. In the degree $1$ case, the possible values of $h_2$ are simply $T-x$ for arbitrary $x$, and the possible values of $h_1$ are non-zero constants, and $a_{i,j}$ is simply an $n$-tuple of numbers $a_i$.  Thus the relevant space has coordinates $x, h ,a_1,\dots,a_n$, with $h \neq 0$, where $\overline{e}$ can be written as the coefficient of $T^{-1}$ in $\frac{h}{T-x}  f(a_1,\dots,a_n)$, and $\pi_2 \circ p_2'$ is simply the map $x$.

The coefficient of $T^{-1}$ in  $\frac{h}{T-x}  f(a_1,\dots,a_n)$ is equal to $h f(a_1,\dots,a_n)$. Because this map is independent of $x$, proper base change implies that  the complex $\bar{S}_{1,f} = R (\pi_2 \circ p_2')_!  \overline{e}^* \mcL_\psi$ is constant; viz.\ it is the pullback from a point to $\mathbb A^1$ of the compactly supported cohomology of $\mathbb G_m \times \mathbb A^n$ with coordinates $h,a_1,\dots,a_n$, with coefficients in $\mcL_\psi ( h f(a_1,\dots,a_n))$. Hence, by Lemma~\ref{redcoh} it is equal to the constant complex $H^{*,\red}_c(X_{\bar \FF_q},\Ql) [-2](-1)$.  \end{proof}

We have now completed the calculation of $\overline{S}_{1,f}$, and we are ready to turn around and apply our previous results, using $\overline{S}_{1,f}$ to calculate $\overline{S}_{m,f}$, from there to calculate the cohomology of $\overline{S}_{m,f}$, from there to calculate the cohomology of $S_{d,f}$ on $A^{kd}_m - A^{kd}_{m-1}$, and finally from there to calculate the cohomology of $S_{d,f}$ on $A^{kd}_m$.

\begin{lemma}\label{residuecalculation}Assume that $X$ is irreducible and that $\chaar(\mathbb F_{q})>k$. Then  \[H^*_c ( \mathbb A^m, \overline{S}_{m,f}) = (H^*_c(\Pconf_m,\Ql) \otimes H^{*,\red}_c(X_{\bar \FF_q},\Ql)^{\otimes m})^{S_m} [-2m](-m) .\] \end{lemma}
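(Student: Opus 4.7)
My plan is to reduce the computation from $\mathbb{A}^m$ to the configuration space $\Conf_m$ by exploiting the vanishing on the non-squarefree locus, then to pull back to the ordered configuration space $\Pconf_m$ where the factorisation of Lemma~\ref{factorisation} trivialises the sheaf.

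\textbf{Step 1: Reduce to the squarefree locus.} Let $U \subseteq \mathbb{A}^m$ be the open subscheme of monic squarefree polynomials of degree $m$ and let $Z = \mathbb{A}^m - U$ be its complement. By Lemma~\ref{powervanishing}, the stalks of $\overline{S}_{m,f}$ vanish at every point of $Z$, so $H^*_c(Z, \overline{S}_{m,f}|_Z)=0$, and excision gives $H^*_c(\mathbb{A}^m, \overline{S}_{m,f}) = H^*_c(U, \overline{S}_{m,f}|_U)$. The map sending an ordered tuple $(x_1,\dots,x_m)$ of distinct points in $\mathbb{A}^1$ to the monic polynomial $\prod_{i=1}^m (T-x_i)$ identifies $U$ with $\Conf_m$ and provides an $S_m$-torsor $\pi : \Pconf_m \to U$.

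\textbf{Step 2: Trivialise after pullback to $\Pconf_m$.} Iterating Lemma~\ref{factorisation} ($m-1$ times, splitting off one linear factor at each step) yields a canonical $S_m$-equivariant isomorphism
\[
\pi^* \overline{S}_{m,f} \;\cong\; \pi_1^* \overline{S}_{1,f} \otimes \cdots \otimes \pi_m^* \overline{S}_{1,f},
\]
where $\pi_i : \Pconf_m \to \mathbb{A}^1$ sends $(x_1,\dots,x_m)$ to $T-x_i$, and $S_m$ acts on the right-hand side by permuting the tensor factors with the Koszul sign convention coming from Lemma~\ref{factorisation}. By Lemma~\ref{mainterm}, each $\pi_i^* \overline{S}_{1,f}$ is the constant complex $H^{*,\red}_c(X_{\bar\FF_q},\Ql)[-2](-1)$, and hence
\[
\pi^* \overline{S}_{m,f} \;\cong\; H^{*,\red}_c(X_{\bar\FF_q},\Ql)^{\otimes m}[-2m](-m)
\]
as a constant complex on $\Pconf_m$, with $S_m$ acting via the symmetric group action on the tensor product (including the Koszul signs in the appropriate degrees).

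\textbf{Step 3: Descend and apply K\"unneth.} Because $\pi$ is finite \'etale with group $S_m$ and $|S_m|$ is invertible in $\Ql$, the pushforward $\pi_*$ is exact and satisfies $\overline{S}_{m,f}|_U = (\pi_* \pi^* \overline{S}_{m,f})^{S_m}$, so taking $S_m$-invariants commutes with compactly supported cohomology:
\[
H^*_c(U, \overline{S}_{m,f}|_U) \;=\; H^*_c\bigl(\Pconf_m,\, \pi^* \overline{S}_{m,f}\bigr)^{S_m}.
\]
Substituting the constant complex description from Step~2 and invoking the K\"unneth formula on the constant sheaf (together with the fact that $\Pconf_m$ is connected), we obtain
\[
H^*_c(U, \overline{S}_{m,f}|_U) \;=\; \bigl(H^*_c(\Pconf_m,\Ql) \otimes H^{*,\red}_c(X_{\bar\FF_q},\Ql)^{\otimes m}\bigr)^{S_m}[-2m](-m),
\]
which combined with Step~1 gives the desired formula.

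\textbf{Main obstacle.} The only subtle point is the bookkeeping of the $S_m$-action in Step~2: the isomorphism of Lemma~\ref{factorisation} in the equal-degree case was set up to intertwine the swap of factors with the standard tensor-product swap (including Koszul signs), and one needs to check that the iterated application produces exactly the symmetric group action on the $m$-fold tensor product with the Koszul convention. Once this is verified, Step~3 is purely formal, and the sign representation $\sgn^{n-1}$ that appears in the main theorem is simply the parity of the Koszul sign on the top-degree piece $H^{n-1}_c(X_{\bar\FF_q},\Ql)^{\otimes m}$.
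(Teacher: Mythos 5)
Your proposal is correct and follows essentially the same route as the paper: reduce to the squarefree locus via Lemma~\ref{powervanishing} and excision, pass to the $S_m$-cover $\Pconf_m \to \Conf_m$, trivialise the pullback by iterating Lemma~\ref{factorisation} and applying Lemma~\ref{mainterm}, and finish with the cohomology of constant complexes plus $S_m$-invariants. Your closing remark about the Koszul signs is also consistent with how the paper defers the $\sgn^{n-1}$ twist to Corollary~\ref{degreemsmooth}, where $H^{*,\red}_c$ is concentrated in degree $n-1$.
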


\begin{proof} Lemma~\ref{powervanishing} yields
$H^*_c ( \mathbb A^m, \overline{S}_{m,f}) = H^*_c ( \Conf_m, \overline{S}_{m,f})$, 
where $\Conf_m$ is the space of squarefree polynomials of degree $m$. Let 
$
r: \Pconf_m \to \Conf_m$
be the natural covering map that sends $(x_1,\dots,x_m)$ to $\prod_{i=1}^m (T-x_i)$. Then $r$ is a Galois finite \'{e}tale $S_m$-cover, so that
\[ H^*_c ( \Conf_m, \overline{S}_{m,f})= H^*_c ( \Pconf_m, r^*\overline{S}_{m,f})^{S_m}. 
\]
By iteratively applying Lemma~\ref{factorisation}, we see that $r^*\overline{S}_{m,f}= \bigotimes_{i=1}^m pr_i^* \overline{S}_{1,f}$ where $pr_i : \Pconf_m \to \mathbb A^1$ is the map sending $(x_1,\dots,x_m)$ to $x_i$,
with $S_m$ acting on it the usual way. Applying Lemma~\ref{mainterm}, this is the constant complex $H^{*,\red}_c(X_{\bar \FF_q},\Ql)^{\otimes m} [-2m](-m) $, with $S_m$ acting on it in the usual way. Hence by the formula for the cohomology of constant complexes
\begin{align*}
H^*_c ( \Conf_m, \overline{S}_{m,f})&=
H^*_c ( \Pconf_m, r^*\overline{S}_{m,f})^{S_m} \\
&= H^*_c ( \Pconf_m, H^{*,\red}_c(X_{\bar \FF_q},\Ql)^{\otimes m} [-2m](-m) )^{S_m}\\
&= \left( H^*_c(\Pconf_m,\Ql) \otimes H^{*,\red}_c(X_{\bar \FF_q},\Ql)^{\otimes m} [-2m](-m)\right)^{S_m}, 
\end{align*}
as required. 
\end{proof}

\begin{cor}\label{degreemcalculation}
Assume that $X$ is irreducible and that $\chaar(\mathbb F_{q})>k$. Then the cohomology group
$H^*_c( A_m^{kd} - A_{m-1}^{kd}, S_{d,f})$ is equal to 
\[  \left( H^*_c(\Pconf_m,\Ql) \otimes H^{*,\red}_c(X_{\bar \FF_q},\Ql)^{\otimes m}  \right)^{S_m}[-2m - 2n(d-m))](-m-n(d-m)). 
\]  
\end{cor}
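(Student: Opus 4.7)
The proof is a direct composition of the three preceding lemmas; the conceptual work has already been done. I would begin by applying Lemma~\ref{infinityvanishing} (whose hypothesis $d \geq m$ holds since we may assume $m \leq d$) to eliminate the contribution from the "boundary" of the $m$-th major arc stratum:
$$H^i_c(A_m^{kd} - A_{m-1}^{kd}, S_{d,f}) = H^i_c(U_m^{kd}, S_{d,f}).$$
Geometrically, this reflects the vanishing of the stalk of $S_{d,f}$ at points of $A_m^{kd} - A_{m-1}^{kd}$ not lying in $U_m^{kd}$; these are the points whose best rational approximant has denominator of degree $m' < m$ but whose residual error is not of order $T^{-kd-1}$.

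Next I would apply Lemma~\ref{residuereduction}, which passes from the $d$-coefficient integration problem on $U_m^{kd}$ to the $m$-coefficient problem on $\mathbb A^m$:
$$H^i_c(U_m^{kd}, S_{d,f}) = H^{i-2n(d-m)}_c(\mathbb A^m, \overline{S}_{m,f})(-n(d-m)).$$
This step absorbs the "extra" $n(d-m)$ degrees of freedom (the polynomial quotient after dividing by $h_2$) into a shift and Tate twist, using the fact that $\overline{e}$ factors through the residue mod $h_2$.

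Finally, I would invoke Lemma~\ref{residuecalculation} to evaluate the right-hand side as
$$H^*_c(\mathbb A^m, \overline{S}_{m,f}) = \bigl( H^*_c(\Pconf_m, \Ql) \otimes H^{*,\red}_c(X_{\bar \FF_q}, \Ql)^{\otimes m} \bigr)^{S_m}[-2m](-m).$$
Combining the two shifts ($2m$ and $2n(d-m)$) and the two Tate twists ($m$ and $n(d-m)$) recovers the statement of the corollary.

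There is no real obstacle here — all the work has been done in Lemmas~\ref{infinityvanishing}, \ref{residuereduction}, and \ref{residuecalculation}; the only task is careful bookkeeping of the degree shifts and Tate twists produced at each stage. One point worth noting is that Lemma~\ref{residuecalculation} requires $X$ to be irreducible, but this follows under our standing hypotheses (smooth hypersurface with smooth projective closure, $n \geq 3$).
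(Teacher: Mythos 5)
Your proof is correct and follows exactly the paper's argument: compose Lemma~\ref{infinityvanishing}, Lemma~\ref{residuereduction}, and Lemma~\ref{residuecalculation}, tracking the shifts and Tate twists. One small note: the irreducibility of $X$ is an explicit hypothesis of the corollary rather than something that needs to be deduced, so the final remark is superfluous.
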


\begin{proof}
It follows from Lemmas~\ref{infinityvanishing} and 
\ref{residuereduction} that 
\begin{align*}
H^*_c( A_m^{kd} - A_{m-1}^{kd}, S_{d,f}) 
=H^*_c( U_m^{kd}, S_{d,f}) 
&=H^*_c(\mathbb A^m,\overline{S}_{m,f})[-2n(d-m)](-n(d-m)).
\end{align*}
The corollary now follows from an application of   Lemma~\ref{residuecalculation}.\end{proof}

\begin{lemma}\label{supersmooth} Assume that the leading terms of $f$ define a smooth hypersurface and that $\chaar(\mathbb F_{q})>k$. 
Then $H^{*,\red}_c(X
_{\overline\FF_q} 
,\Ql)$ is supported in degree $n-1$ and equals $H^{n-1}_c(X
_{\overline\FF_q} 
,\Ql)$ in that degree. \end{lemma}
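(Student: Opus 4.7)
The plan is to show that $H^i_c(X_{\overline{\FF}_q}, \Ql)$ vanishes for every $i \notin \{n-1,\, 2(n-1)\}$ and that the trace map $H^{2(n-1)}_c(X_{\overline{\FF}_q}, \Ql) \to \Ql(-(n-1))$ is an isomorphism in the top degree. Once these are in hand, the definition of $H^{*,\red}_c$ as the mapping cocone of the trace immediately yields the claim: the cone kills the top class and leaves only $H^{n-1}_c$ in place.

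The ``upper half'' of this vanishing is essentially formal. Since $X$ is smooth affine of dimension $n-1$, Artin vanishing gives $H^j(X_{\overline{\FF}_q}, \Ql) = 0$ for $j > n-1$, which by Poincar\'{e} duality translates into $H^i_c = 0$ for $i < n-1$. Irreducibility of $X$ (coming from irreducibility of the leading form $f_0$, itself a consequence of $Z$ being an irreducible smooth projective hypersurface) forces $H^{2(n-1)}_c \cong \Ql(-(n-1))$ with the trace realising this isomorphism.

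What remains is the ``lower half'' vanishing $H^i_c = 0$ for $n \le i \le 2n-3$, which is equivalent via Poincar\'{e} duality to $H^j(X_{\overline{\FF}_q}, \Ql) = 0$ for $1 \le j \le n-2$. To obtain it, I would form the smooth projective closure $\overline{X} \subset \mathbb{P}^n$ (smooth at infinity because $Z$ is smooth) and analyse the Gysin long exact sequence
\[
\cdots \to H^{j-2}(Z)(-1) \xrightarrow{i_*} H^j(\overline{X}) \to H^j(X) \to H^{j-1}(Z)(-1) \xrightarrow{i_*} H^{j+1}(\overline{X}) \to \cdots.
\]
The Lefschetz hyperplane theorem applied to $\overline{X} \subset \mathbb{P}^n$ identifies $H^i(\overline{X}) \cong H^i(\mathbb{P}^n)$ for $i < n-1$, and applied to the hyperplane section $Z \subset \overline{X}$ it yields that $i^* : H^i(\overline{X}) \to H^i(Z)$ is an isomorphism for $i < n-2$ and injective for $i = n-2$. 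Via the projection formula $i_* i^* = (\cup\, [Z])$, each Gysin arrow above becomes cup product with the hyperplane class $[Z]$ transported by the isomorphism $i^*$, and this cup product is an isomorphism wherever both source and target lie in the Lefschetz range. Threading these isomorphisms through the Gysin sequence forces $H^j(X) = 0$ for $1 \le j \le n-2$.

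I expect the boundary case $j = n-2$ to be the main technical point. Here the Gysin map $H^{n-3}(Z)(-1) \to H^{n-1}(\overline{X})$ is only injective rather than an isomorphism, its cokernel being the primitive cohomology $P^{n-1}(\overline{X})$. Injectivity is nonetheless sufficient: combined with the preceding isomorphism $H^{n-4}(Z)(-1) \xrightarrow{\sim} H^{n-2}(\overline{X})$, exactness of the Gysin sequence at $H^{n-2}(X)$ forces the map $H^{n-2}(\overline{X}) \to H^{n-2}(X)$ to be zero, while exactness at $H^{n-3}(Z)(-1)$ forces $H^{n-2}(X) \to H^{n-3}(Z)(-1)$ to land in $\ker(i_*) = 0$, whence $H^{n-2}(X) = 0$.
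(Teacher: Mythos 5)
Your proof is correct and follows essentially the same route as the paper: reduce to showing $H^i_c(X_{\overline{\FF}_q},\Ql)=0$ for $i\notin\{n-1,2(n-1)\}$, handle $i<n-1$ by Artin vanishing plus Poincar\'e duality, and handle $n-1<i<2(n-1)$ by passing to the smooth compactification and exploiting the fact that the cohomology of smooth projective hypersurfaces outside the middle degree is generated by powers of the hyperplane class, which is preserved under restriction to the hyperplane section at infinity. The only (cosmetic) difference is that you run the Poincar\'e-dual Gysin sequence in ordinary cohomology where the paper uses the excision sequence $H^i_c(X)\to H^i(\overline{X})\to H^i(D)$ in compactly supported cohomology.
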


\begin{proof} The fact that $H^i_c(X
_{\overline\FF_q} 
,\Ql) = H^{i,\red}_c(X
_{\overline\FF_q} 
,\Ql)$ for $i < 2(n-1)$ follows immediately from the definition. The fact that $H^{2(n-1),\red}(X
_{\overline\FF_q} 
,\Ql)=0$ follows from the definition and the fact that $X$ is irreducible, which means that the trace map is an isomorphism in top degree. It remains to show that $H^i_c(X
_{\overline\FF_q} 
,\Ql)=0$ for $i<2(n-1)$ with $i\neq n-1$. 

For $i<n-1$ this follows immediately from Poincar\'e duality and the fact that the cohomological dimension of an $(n-1)$-dimensional affine variety is $n-1$ by \cite[Expos\'e XIV, Cor.~3.2]{sga4-3}.

For $i>n-1$, we let $\overline{X}$ be the projective closure of $X$  and we let  $D$ denote  the divisor at $\infty$. Then $\overline{X}$ and $D$ are both smooth projective hypersurfaces, with $\dim \overline{X}=n-1$  and  $\dim D=n-2$.
We claim that the restriction map $H^i(\overline{X}
_{\overline\FF_q} 
,\Ql) \to H^i(D
_{\overline\FF_q} 
,\Ql)$ is an isomorphism in every even degree $i$ satisfying $n-1<i < 2(n-1)$ and that it is surjective in degree $n-1$. 
To see this we note that in every degree $i<2(n-1)$, with $i\neq n-1$, 
 the cohomology group $H^i(X
 _{\overline\FF_q} 
 ,\Ql)$ is the one-dimensional space generated by the $\frac{i}{2}$th power of the hyperplane class, if $i$ is even, or vanishes if $i$ is odd.
The same is true for $H^i(D_{\overline\FF_q} 
,\Ql)$ in degrees $< 2(n-1)$ except for $n-2$. Because the pullback of the hyperplane class is the hyperplane class, the pullback map is an isomorphism for $i>n-1$ and surjective for $i=n-1$.
We can apply this fact to the excision exact sequence $$H^i_c(X_{\overline\FF_q} 
,\Ql) \to H^i(\overline{X}_{\overline\FF_q} 
,\Ql) \to H^i(D_{\overline\FF_q} 
,\Ql),
$$ 
whence  $H^i_c(X_{\overline\FF_q} 
,\Ql)$ vanishes in every degree $i\in (n-1, 2(n-1))$. 
\end{proof}

\begin{cor}\label{degreemsmooth} 
Assume that the leading terms of $f$ define a smooth hypersurface, 
 that $\chaar(\mathbb F_{q})>k$ and that $d\geq m$.  Then  
$H^i_c( A_m^{kd} - A_{m-1}^{kd}, S_{d,f})$ is isomorphic to 
\[ \left(H^{a}_c(\Pconf_m,\Ql) \otimes H^{n-1}_c(X_{\overline\FF_q} 
,\Ql)^{\otimes m} \otimes \sgn^{n-1} \right)^{S_m} (-m-n(d-m)), \] 
where $a=i+m(n-1) -2nd$ and  $\sgn$ is the sign representation of $S_m$.
\end{cor}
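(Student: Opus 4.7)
The plan is to deduce this corollary by substituting the computation of $H^{*,\red}_c(X_{\overline\FF_q},\Ql)$ from Lemma~\ref{supersmooth} into the general identification provided by Corollary~\ref{degreemcalculation}. Lemma~\ref{supersmooth} asserts that $H^{*,\red}_c(X_{\overline\FF_q},\Ql)$ is quasi-isomorphic to the single vector space $H^{n-1}_c(X_{\overline\FF_q},\Ql)$ placed in cohomological degree $n-1$. Feeding this into Corollary~\ref{degreemcalculation} collapses the tensor factor $H^{*,\red}_c(X_{\overline\FF_q},\Ql)^{\otimes m}$ to the ungraded vector space $H^{n-1}_c(X_{\overline\FF_q},\Ql)^{\otimes m}$, now concentrated in a single cohomological degree, namely $m(n-1)$.

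Next I would track the total cohomological shift. After extracting the $i$-th cohomology group of the complex appearing on the right-hand side of Corollary~\ref{degreemcalculation}, and accounting for the explicit shift $[-2m-2n(d-m)]$ together with the concentration in degree $m(n-1)$ just noted, the configuration space factor $H^*_c(\Pconf_m,\Ql)$ is forced to contribute in cohomological degree
\[ a = i - 2m - 2n(d-m) - m(n-1), \]
which simplifies to $a = i + m(n-1) - 2nd$, in agreement with the statement. The Tate twist $(-m-n(d-m))$ is unaffected by this collapse and is inherited verbatim from Corollary~\ref{degreemcalculation}.

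The one point requiring care, and where the sign character $\sgn^{n-1}$ enters, is an application of the Koszul sign rule. When one views $H^{n-1}_c(X_{\overline\FF_q},\Ql)$ as a complex concentrated in odd or even degree $n-1$, the symmetric group $S_m$ acts on the tensor product $H^{n-1}_c(X_{\overline\FF_q},\Ql)^{\otimes m}$ of complexes so that a transposition of two adjacent factors acquires a Koszul sign $(-1)^{(n-1)^{2}} = (-1)^{n-1}$. Hence, regarded as an $S_m$-representation on the underlying ungraded vector space $H^{n-1}_c(X_{\overline\FF_q},\Ql)^{\otimes m}$, this graded tensor power is the naive permutation module tensored with $\sgn^{n-1}$. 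Taking $S_m$-invariants of the tensor product with $H^a_c(\Pconf_m,\Ql)$ then yields exactly the expression stated in the corollary. There is no substantive obstacle here; the only thing that must be handled with care is this Koszul sign bookkeeping, and once it is correctly accounted for the corollary drops out of Corollary~\ref{degreemcalculation} and Lemma~\ref{supersmooth}.
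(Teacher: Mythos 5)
Your proposal is correct and follows essentially the same route as the paper: substitute Lemma~\ref{supersmooth} into Corollary~\ref{degreemcalculation}, track the degree shift to get $a=i+m(n-1)-2nd$, and account for the Koszul sign $(-1)^{(n-1)^2}=(-1)^{n-1}$ on the $S_m$-action, which produces the $\sgn^{n-1}$ twist. No gaps.
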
 

\begin{proof}   Corollary~\ref{degreemcalculation} implies that 
$H^*_c( A_m^{kd} - A_{m-1}^{kd}, S_{d,f})$ is isomorphic to 
\[   \left( H^*_c(\Pconf_m,\Ql) \otimes H^{*,\red}_c(X_{\overline\FF_q} 
,\Ql)^{\otimes m}  \right)^{S_m}[-2m - 2n(d-m))](-m-n(d-m)).
\]  
By Lemma~\ref{supersmooth}, $H^{*,\red}_c(X_{\overline\FF_q} 
,\Ql)$ is supported in degree $n-1$ and equal to $H^{n-1}_c(X_{\overline\FF_q} 
,\Ql)$ in that degree. Thus $H^{*,\red}_c(X_{\overline\FF_q} 
,\Ql)^{\otimes m} $ is supported in degree $m(n-1)$ and is equal to $H^{n-1}_c(X_{\overline\FF_q} 
,\Ql)^{\otimes m}$ in that degree. However, if $n-1$ is odd then the $S_m$-action on this tensor power is not the usual one but is instead the usual one twisted by the sign character (because the symmetry of the tensor product of the odd degree cohomology groups of two complexes is the opposite of the usual symmetry).   In this way we deduce that 
 $H^i_c( A_m^{kd} - A_{m-1}^{kd}, S_{d,f}) $ is as claimed,
with  $a=i-2m -2n(d-m) -m (n-1)=i+ m(n-1) -2nd $. 
\end{proof} 

We can make the isomorphism in this result explicit by unwinding all the proofs. This is illustrated 
in Figure \ref{fig:zig}, in which we have omitted the Tate twists for compactness of 
notation.
\begin{figure}
\[ 
\begin{tikzcd}
[column sep=tiny] 
H^i_c( A_m^{kd} - A_{m-1}^{kd}, S_{d,f}) &  H^i_c( U_m^{kd}, S_{d,f} ) \arrow[l,"(1)"] 
\\
H^{i - 2n(d-m)}_c (  \mathbb A^m, R\pi_{2!} R p'_{2!} \overline{e}^* \mathcal L_\psi )  
\arrow[r,"(3)"]
  &  H^{i - 2n(d-m)}_c ( U_m^{kd}, R\pi_{2!} R p'_{2!} \overline{e}^* \mathcal L_\psi )\arrow[u,"(2)"]  \\
H^{i-2n(d-m)}_c ( \mathbb A^m, \overline{S}_{m,f} ) \arrow[u,"(4)"]
&
H^{i-2n(d-m)}_c ( \Conf_m, \overline{S}_{m,f} )  \arrow[l,"(5)"] \\
H^{i-2m-2n(d-m)}_c ( \Pconf_m, H^{*, \operatorname{red}}_c (X_{\overline{\mathbb F}_q}, \mathbb Q_\ell)^{\otimes m} )\arrow[r,"(7)"] &
H^{i-2n(d-m)}_c ( \Pconf_m, r^* \overline{S}_{m,f} )\arrow[u,"(6)"] \\
H^{a }_c ( \Pconf_m,  \mathbb Q_\ell) \otimes H^{n-1}_c(X_{\overline{\mathbb F}_q},\mathbb Q_\ell)^{\otimes m} \otimes \sgn^{n-1} \arrow[u,"(8)"]  &  \\
\left( H^{a}_c ( \Pconf_m,  \mathbb Q_\ell) \otimes H^{n-1}_c(X_{\overline{\mathbb F}_q},\mathbb Q_\ell)^{\otimes m} \otimes \sgn^{n-1} \right)^{S_m} \arrow[u,"(9)"] 
\end{tikzcd}
\]
\caption{}
\label{fig:zig}
\end{figure}
The maps in this composition arise in the following way:
\begin{enumerate}
\item arises from functoriality of compactly supported cohomology under open immersions (and is checked in Lemma~\ref{infinityvanishing} to be an isomorphism);
\item arises from an isomorphism of complexes on $U_m^{kd}$ constructed in Lemma~\ref{residuereduction};
\item is the projection formula along $\pi_2$ (and thus is automatically an isomorphism);
\item is the definition of $\overline{S}_{m,f}$ (and thus is automatically an isomorphism);
\item arises from functoriality of compactly supported cohomology under open immersions (and is checked in Lemma \ref{powervanishing} to be an isomorphism);
\item arises from an isomorphism of complexes constructed in Lemma \ref{residuecalculation};
\item is the trace map of compactly supported cohomology under finite morphisms (which is an isomorphism on $S_m$-invariants because the finite morphism is a Galois finite \'{e}tale cover with Galois group $S_m$); 
\item arises from the isomorphism $  H^{n-1}_c(X_{\overline{\mathbb F}_q},\mathbb Q_\ell)[1-n] \to H^{*, \operatorname{red}}_c (X_{\overline{\mathbb F}_q}, \mathbb Q_\ell) $ and $a=i-2m-2n(d-m) -m (n-1) $; and 
\item is the inclusion of $S_m$ invariants (and so is  an isomorphism on $S_m$-invariants).
\end{enumerate}

We now have all the ingredients to complete our  first stage in the treatment of the geometric major arcs, as enshrined in 
 Proposition~\ref{majormain}. It follows from the construction of the (descending) filtration spectral sequence (e.g  \cite[012K]{stacks}) that it converges to $H^{*}_c(A_d^{kd}, S_{d,f})$, that its first page $E^{m,s}_1$ is the $(m+s)$th cohomology of the $m$th associated graded piece 
of this filtration, which by excision is 
$$H^{m+s}_c( (A_d^{kd} - A_{m-1}^{kd} ) - (A_d^{kd} - A_m^{kd}),S_{d,f}) = H^{m+s}_c(A_m^{kd} - A_{m-1}^{kd},S_{d,f}),
$$ 
and finally that the differential on the $r$th page has bidegree $(r,1-r)$.
 Hence  Corollary~\ref{degreemsmooth} implies that it is 
\[  \left(H^{s+mn-2nd}_c(\Pconf_m,\Ql) \otimes H^{n-1}_c(
X_{\overline\FF_q} 
,\Ql)^{\otimes m} \otimes \sgn^{n-1} \right)^{S_m} (-m-n(d-m)),\] 
since  $m+s+m(n-1) -2nd=s+mn-2nd$. 
This therefore completes the proof of  everything in
   Proposition~\ref{majormain} apart from the vanishing of the differentials. 
  This is the topic of the following section. 
   
     \section{Vanishing of differentials on odd pages} \label{s:diff}
 
The primary goal of this section is to establish the following result, which serves to conclude the proof of 
Proposition~\ref{majormain}.

   \begin{prop}\label{odd-vanishing} 
   In Proposition \ref{majormain}, the spectral sequence has differentials vanishing on every odd page. \end{prop}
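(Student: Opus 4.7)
The plan is to prove the vanishing of odd-page differentials via a Frobenius weight argument in the spirit of Deligne's Weil II, refined by a representation-theoretic analysis of the symmetric-group structures present in the formula for $E_1^{m,s}$.

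First I would compute the Frobenius weights of $E_1^{m,s}$. The cohomology $H^i_c(\Pconf_m, \Ql)$ is pure of weight $2(i-m)$, which can be verified using the iterated fibration description of $\Pconf_m$ as successive complements starting from $\mathbb A^1 \times \mathbb G_m \times (\mathbb A^1 - \{0,1\}) \times \cdots$. On the other hand, $H^{n-1}_c(X_{\overline{\FF_q}}, \Ql)$ has Frobenius weights in $\{n-2, n-1\}$, which follows from the excision long exact sequence $H^{n-2}(D) \to H^{n-1}_c(X) \to H^{n-1}(\bar X) \to H^{n-1}(D)$ together with the purity of the cohomology of the smooth projective $\bar X$ and $D$. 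Combining these observations with the explicit Tate twist $(-m - n(d-m))$, the weights of $E_1^{m,s}$ lie in the integer interval $[2s - 2nd + m(n-2),\; 2s - 2nd + m(n-1)]$, with the weight $2s - 2nd + m(n-2) + k$ appearing with multiplicity $\binom{m}{k}$ times the multiplicity of the underlying $\Pconf$-cohomology piece, for $0 \leq k \leq m$.

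Since $E_r^{m,s}$ is a subquotient of $E_1^{m,s}$, its weights remain in this interval, and differentials $d_r$ must preserve Frobenius weights as morphisms of Galois representations. A direct comparison of weight sets shows that the target $E_r^{m+r, s-r+1}$ shares weights with the source $E_r^{m,s}$ only when $m \geq r(n-4) + 2$; moreover, the weight-preserving component of $d_r$ can only connect the $k$-indexed piece of the source to the $k'$-indexed piece of the target with $k' = k - r(n-4) - 2$. All other components of $d_r$ are forced to vanish by purity. For odd $r$, the hard part is to rule out these remaining weight-preserving components as well.

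My strategy for this final step would be to trace the connecting homomorphism $d_r$ through the geometric filtration and identify a Koszul-type sign obstruction. Concretely, the boundary map of the stratification $A^{kd}_m - A^{kd}_{m-1}$ can be described in terms of collisions of roots of the denominator polynomial $h_2$ in the parametrization of $U^{kd}_m$ from Lemma~\ref{rationalisation}. Such a collision introduces an explicit sign, via a transposition, in the $S_{m+r}$-action on the relevant invariants, and when combined with the twist by $\sgn^{n-1}$ together with the parity of $r$ arising from the bidegree $(r, 1-r)$, this sign forces cancellation for odd $r$ but not for even $r$. The main obstacle is making this sign-parity analysis rigorous: one must carefully match the Koszul sign conventions for tensor products of complexes with the geometric sign conventions for the connecting homomorphism and the symmetric-group action on configuration-space cohomology, and verify that the resulting obstruction is indeed insensitive to the specific weight piece under consideration.
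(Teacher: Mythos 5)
The first half of your proposal — the Frobenius weight computation — is correct as far as it goes: $H^i_c(\Pconf_m,\Ql)$ is indeed pure of weight $2(i-m)$, the weights of $H^{n-1}_c(X_{\overline{\FF}_q},\Ql)$ lie in $\{n-2,n-1\}$, and the resulting weight intervals for $E_1^{m,s}$ and $E_1^{m+r,s-r+1}$ overlap only when $m\geq r(n-4)+2$. But note that this condition is entirely insensitive to the \emph{parity} of $r$; the same is true of the cruder support argument via \eqref{eq:support}, which the paper already uses (to show degeneration for $m+s>1-2(n-2)(n-3)$) and which in fact gives the slightly stronger constraint $m\geq r(n-3)+2$. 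So the weight bookkeeping cannot by itself detect odd $r$, and the entire burden of the proposition falls on your second, ``Koszul sign'' step.

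That second step contains a genuine gap, and I do not believe it can be repaired as stated. The differentials of the filtration spectral sequence for a fixed hypersurface $X$ are particular connecting maps; there is no purely formal mechanism in that construction that would make them vanish for odd $r$ while allowing them for even $r$. The sign twist $\sgn^{n-1}$ is built into each individual $E_1^{m,s}$ and is part of identifying the graded pieces — it does not, on its own, constrain the connecting maps between strata, and ``collisions of roots of $h_2$'' take place on the \emph{boundary} of a stratum and are therefore not seen inside the open stratum $U_m^{kd}$ where the $S_m$-symmetry lives. Moreover, the relevant symmetry for the differential $d_r$ is not an $S_{m+r}$-action on a tensor power: there is no natural common $S_{m+r}$-equivariant structure relating $E_r^{m,s}$ and $E_r^{m+r,s-r+1}$ from which a transposition sign could be extracted.

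The idea actually needed is of a different nature and is the heart of Section~\ref{s:diff}: one must promote the entire construction to a \emph{family} over the moduli space $\mathcal M_{n,k}$ of pairs $(f,P)$, so that each page of the spectral sequence becomes a complex of lisse sheaves on $\mathcal M_{n,k}$ and the differentials become morphisms of lisse sheaves, hence commute with the geometric monodromy group. One then shows (Lemmas~\ref{monodromy-of-hypersurfaces} and~\ref{hypersurface-negation}) that $R^{n-1}pr_{\mathcal M,X!}\Ql$ is an extension of $\operatorname{Prim}_{n,k}$ by $\operatorname{Prim}_{n-1,k}$ whose combined monodromy is essentially $O_{N_1}\times Sp_{N_2}$ and thus contains an element acting by the scalar $-1$. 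Via Lemma~\ref{redo-everything}, that element acts by $(-1)^m$ on the $m$th column, so any differential changing $m$ by an odd integer $r$ intertwines eigenvalues $(-1)^m$ and $(-1)^{m+r}=-(-1)^m$ and must vanish. This monodromy input is not available from Frobenius on a single $X$: the Frobenius eigenvalues on $H^{n-1}_c(X)$ have the right absolute values but are not the scalar $-1$, which is why the weight argument cannot close the gap. In short, you are missing the passage to the relative setting and the large-monodromy input, which is the essential mechanism behind the parity phenomenon.
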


  Let $\mathcal M_{n,k}$ be the moduli space over ${\mathbb F}_q$ of pairs of a degree $k$ polynomial $f$ in $n$ variables and a point $(x_1,\dots, x_n) \in \mathbb A^n \setminus\{0\}$, such that the vanishing set of $f$ is smooth, the projective vanishing set of the degree $k$ part of $f$ is smooth, and the projective vanishing set of the degree $k$ part of $f$ contains 
  $(x_1: \dots : x_n)$. Each point of $\mathcal M_{n,k}$ defines a space $\mor_{d,P}(\AA^1,X)$ with $X$ the affine vanishing set of $f$ and $P = (x_1: \dots : x_n)$, and moreover, there is a universal family over $\mathcal M_{n,k}$ whose fiber is $\mor_{d,P}(\AA^1,X)$. Thus $\mathcal M_{n,k}$ is a natural setting to study how the cohomology groups discussed in the rest of the paper vary as $f$ and $P$ do.
  
  Let $\mathcal H_{n,k}$ be the moduli space of smooth projective hypersurfaces of degree $k$ in $\mathbb P^n$. On $\mathcal H_{n,k}$, let $\operatorname{Prim}_{n,k} $ be the lisse sheaf consisting of the primitive middle cohomology of the corresponding family of hypersurfaces. (This is the $(n-1)$th higher pushforward along the universal family of the constant sheaf if $n$ is even, or its quotient by the $\frac{1}{2}(n-1)$th power of the hyperplane class if $n$ is odd.)
    Let $h_n:\mathcal M_{n,k} \to \mathcal H_{n,k}$ be the map given by homogenizing the polynomial $f$, and let $h_{n-1}:\mathcal M_{n,k} \to \mathcal H_{n-1,k}$ be the map given by taking the leading 
terms of $f$. 
  
  We can repeat most of the constructions of  \S~\ref{s:major} 
  in the relative setting. In particular we can define $S_{d,f}$ as a complex of sheaves on $\mathcal M_{n,k} \times \mathbb A^{kd}$. Let $pr_{\mathcal M,m}$ be the projection from $ \mathcal M_{n,k} \times  (A^{kd}_m - A^{kd}_{m-1})$ to $\mathcal M_{n,k}$, where
  $A_m^{kd}$ is  the set of  major arcs in Definition \ref{def:major}.
  
  Let $X_{\mathcal M_{n,k}}$ be the universal family of affine hypersurfaces in $\mathbb A^n$ over $\mathcal M_{n,k}$, let $\overline{X}_{\mathcal M_{n,k}}$ be its projective closure, the universal family of smooth projective hypersurfaces defined by the homogenization of $f$, and let $D_{\mathcal M_{n,k}} $ 
  be the divisor at $\infty$, which is the universal family of smooth projective hypersurfaces defined by the degree $k$ part of $f$.
  
      Let $pr_{\mathcal M,X}$ be the projection from $X_{\mathcal M_{n,k}}$ to $\mathcal M_{n,k}$.  
  We begin by recording some technical  facts
about  the complex $Rpr_{\mathcal M,X!}  \Ql$. 
  
  \begin{lemma}\label{monodromy-of-hypersurfaces} 
  \begin{enumerate}
  \item
    The complex $Rpr_{\mathcal M,X!}  \Ql$ is supported in degrees $n-1$ and $2(n-1)$ and has lisse cohomology in those degrees.
    \item
  $R^{n-1} pr_{\mathcal M,X!}  \Ql$ is an extension of $h_{n}^* \operatorname{Prim}_{n,k}$ by $h_{n-1}^* \operatorname{Prim}_{n-1,k}$.

\item
$R^{2(n-1)} pr_{\mathcal M,X!}  \Ql$ is constant of rank one.  
 
  \end{enumerate}
\end{lemma}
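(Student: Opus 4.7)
My plan is to apply the excision triangle
\[
Rpr_{\mathcal M,X!}\,\Ql \to Rp_*\,\Ql \to Rq_*\,\Ql \to [1],
\]
associated to the open immersion $X \hookrightarrow \overline X$ with closed complement $Z \hookrightarrow \overline X$, where $p\colon \overline X \to \mathcal M_{n,k}$ is the universal smooth projective closure and $q\colon Z \to \mathcal M_{n,k}$ is the universal hyperplane at infinity. Since $p$ and $q$ are smooth and proper, smooth proper base change shows that $Rp_*\,\Ql$ and $Rq_*\,\Ql$ have lisse cohomology sheaves, and by construction $p$ is pulled back along $h_n$ from the universal family over $\mathcal H_{n,k}$, while $q$ is pulled back along $h_{n-1}$ from the universal family over $\mathcal H_{n-1,k}$.

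I would then identify these higher pushforwards fibrewise using the Lefschetz hyperplane theorem: for any smooth projective hypersurface $Y \subset \mathbb P^N$ of dimension $N-1$, $H^i(Y,\Ql) = \Ql(-i/2)$ in each even degree $0 \le i \le 2(N-1)$ with $i \ne N-1$, vanishes in each odd degree $i \ne N-1$, and in middle degree equals the primitive cohomology (plus a hyperplane-class copy when $N-1$ is even). Relativizing, every $R^i p_*\,\Ql$ and $R^i q_*\,\Ql$ is a constant Tate-twisted sheaf, except that $R^{n-1} p_*\,\Ql$ contains $h_n^* \operatorname{Prim}_{n,k}$ and $R^{n-2} q_*\,\Ql$ contains $h_{n-1}^* \operatorname{Prim}_{n-1,k}$. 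Because the restriction of the hyperplane class on $\overline X$ is the hyperplane class on $Z$, the map $Rp_*\,\Ql \to Rq_*\,\Ql$ sends each hyperplane-class piece isomorphically onto its counterpart wherever both are nonzero.

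Reading off the resulting long exact sequence completes the proof. For $i \notin \{n-1, 2(n-1)\}$, the map $R^i p_* \to R^i q_*$ is either an isomorphism of one-dimensional Tate-twisted sheaves or a map between zero sheaves, so $R^i pr_{\mathcal M,X!}\,\Ql = 0$; this gives the support statement in (1), and lissness in the remaining two degrees follows from lissness of $\operatorname{Prim}_{n,k}$, $\operatorname{Prim}_{n-1,k}$, and constant Tate-twisted sheaves. For $i = 2(n-1)$ the target $R^{2(n-1)} q_*\,\Ql$ vanishes for dimension reasons since $\dim Z = n-2$, so $R^{2(n-1)} pr_{\mathcal M,X!}\,\Ql = R^{2(n-1)} p_*\,\Ql = \Ql(-(n-1))$, proving (3). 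For $i = n-1$ the kernel of $R^{n-1} p_* \to R^{n-1} q_*$ is $h_n^* \operatorname{Prim}_{n,k}$ (the hyperplane-class piece, when present, being mapped isomorphically) and the cokernel of $R^{n-2} p_* \to R^{n-2} q_*$ is $h_{n-1}^* \operatorname{Prim}_{n-1,k}$, producing the extension of (2). The main bookkeeping obstacle is the parity of $n$: the middle degrees of $\overline X$ and $Z$ differ by one, and in each parity a different arrangement of hyperplane-class copies appears alongside the primitive pieces; however, in every case the primitive pieces remain confined to the indicated middle degrees and the hyperplane-class pieces cancel cleanly in the long exact sequence.
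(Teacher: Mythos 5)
Your proof follows the same route as the paper: use the excision triangle $Rpr_{\mathcal M,X!}\Ql \to Rpr_{\mathcal M,\overline X!}\Ql \to Rpr_{\mathcal M,D!}\Ql$ (your $p$, $q$ are the paper's $pr_{\mathcal M,\overline X}$, $pr_{\mathcal M,D}$), use smoothness and properness of the compactified families to get lissness, identify the non-middle-degree cohomology sheaves with constant Tate-twisted hyperplane-class pieces, and read off the extension from the long exact sequence using compatibility of the hyperplane class under restriction. One small slip: your claim that $R^ip_*\Ql\to R^iq_*\Ql$ is an isomorphism for every $i\notin\{n-1,2(n-1)\}$ is false at $i=n-2$ (there the map is only injective, with cokernel $h_{n-1}^*\operatorname{Prim}_{n-1,k}$, exactly as you say two sentences later), so for the vanishing of $R^ipr_{\mathcal M,X!}\Ql$ at $i\ne n-1,2(n-1)$ you should instead check separately that the kernel at $i$ and the cokernel at $i-1$ both vanish; both hold for such $i$, so the conclusion stands, but the reasoning as written contradicts your own cokernel computation.
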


  \begin{proof} The fact that $Rpr_{\mathcal M,X!}  \Ql$ is supported in degrees $n-1$ and $2(n-1)$ follows, upon taking stalks, from Lemma \ref{supersmooth}. To check that the cohomology sheaves are lisse, and calculate them, we introduce some additional notation.  Let $pr_{\mathcal M, \overline{X}}$ be the projection from $\overline{X}_{\mathcal M_{n,k}}$ to $\mathcal M_{n,k}$ and let $pr_{\mathcal M,D}$ be the projection from $D_{\mathcal M_{n,k}}$ to $\mathcal M_{n,k}$. Then we have a distinguished triangle
  \[ R pr_{\mathcal M, X !} \Ql \to R pr_{\mathcal M, \overline{X} !} \Ql \to R pr_{\mathcal M,D!}  \Ql .\]
 Since $pr_{\mathcal M , \overline{X}}$ and $pr_{\mathcal M, D}$ are smooth and projective, with the smoothness following from the definition of $\mathcal M_{d,n}$, it follows that both $R pr_{\mathcal M, \overline{X} !} \Ql $ and $ R pr_{\mathcal M,D !} \Ql$ have lisse cohomology sheaves. This implies part (1).
   
  In degree $n-1$, we have an exact sequence
  \[ 
     \begin{tikzcd}
 R^{n-2}  pr_{\mathcal M, \overline{X} ! }\Ql 
\arrow[r,"i^*" ]
&  R^{n-2}  pr_{\mathcal M,D!} \Ql 
\arrow[r]
&   R^{n-1}  pr_{\mathcal M, X !} \Ql 
  \arrow[d]\\
      &
   R^{n-1}  pr_{\mathcal M, D!} \Ql 
&    \arrow[l, "i^*"] R^{n-1}  pr_{\mathcal M,\overline{X}!} \Ql 
\end{tikzcd}
\] where we have labeled the pullback map $i^*$ along the inclusion $i: D \to \overline{X}$. 
For even  $n$, this exact sequence specialises to 
  \[ 
     \Ql \left(- \tfrac{n-2}{2} \right)
\to 
  \Ql \left(- \tfrac{n-2}{2} \right)  \oplus h_{n-1}^* \operatorname{Prim}_{n-1,k} 
\to 
   R^{n-1}  pr_{\mathcal M, X !} \Ql 
\to  h_{n}^* \operatorname{Prim}_{n,k} \to 0.
\]
 Both copies of $\Ql \left(- \frac{n-2}{2} \right)$ are generated by the hyperplane class raised to the power $\frac{n-2}{2}$, so it suffices to check that this power of the hyperplane class is preserved by $i^*$. But this  follows from the preservation of the hyperplane class under pullback and the compatibility of pullback with cup product. 
On the other hand,   
  if $n$ is odd, then the exact sequence specialises to 
    \[ 
0
\to
 h_{n-1}^* \operatorname{Prim}_{n-1,k} 
\to
   R^{n-1}  pr_{\mathcal M, X !} \Ql 
\to 
h_{n}^* \operatorname{Prim}_{n,k} \oplus  \Ql \left(- \tfrac{n-1}{2} \right)
\to 
\Ql \left(- \tfrac{n-1}{2} \right).
\]
Once again, the result follows on checking the compatibility of the pullback $i^*$ with powers of the hyperplane class. \end{proof}
  
  \begin{lemma}\label{hypersurface-negation} Let $k,n\in \ZZ$ be such that  $k\geq 2$ and $ n \geq 3$. Assume that $(k,n) $ is not $(3,3)$ or $(3,4)$. Then there exists an element in the geometric monodromy group of $R^{n-1} pr_{\mathcal M,X!}  \Ql$ which acts on the stalk at the geometric generic point by some element with all eigenvalues $-1$.  \end{lemma}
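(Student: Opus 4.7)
The plan is to combine two ``large monodromy'' inputs via the fibration structure of $\mathcal M_{n,k}$. Write $G_N$ for the geometric monodromy image of $\pi_1(\mathcal H_{N,k,\bar\FF_q}) \to \operatorname{Aut}(\operatorname{Prim}_{N,k})$. The preliminary input will be the classical large monodromy theorem for smooth projective hypersurfaces (due to Beauville in characteristic zero, with positive characteristic extensions due to Katz and others, applicable because $\chaar(\FF_q)>k$): for $k\geq 2$, $N\geq 2$ and $(k,N)\neq (3,3)$, the group $G_N$ contains $-\mathrm{id}$. The exceptional case of cubic surfaces fails because its monodromy is the Weyl group $W(E_6)$, which misses $-\mathrm{id}$. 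The hypothesis $(k,n)\notin\{(3,3),(3,4)\}$ is precisely what excludes this case for $N=n$ and for $N=n-1$ respectively, so $-\mathrm{id}\in G_n$ and $-\mathrm{id}\in G_{n-1}$.

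Next I would set up the fibration structure. Let $\mathcal H_{n,k}^{\circ}\subseteq \mathcal H_{n,k}$ be the open subscheme on which the hyperplane section $D=\overline X \cap \{x_0=0\}$ at infinity is smooth. The map $h_n$ factors as $\mathcal M_{n,k}\to \mathcal H_{n,k}^{\circ}\to \mathcal H_{n,k}$, where the first factor has geometrically irreducible fibres (over $\overline X$ the fibre is $D$ itself), and the natural projection $\mathcal H_{n,k}^{\circ}\to \mathcal H_{n-1,k}$ sending $\overline X$ to $D$ recovers $h_{n-1}$. This latter projection also has geometrically connected fibres, since each is an open subvariety of the affine space of polynomials $F(x_0,\ldots,x_n)$ of degree $k$ with $F(0,x_1,\ldots,x_n)$ prescribed. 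Hence $\pi_1(\mathcal M_{n,k,\bar\FF_q})$ surjects onto $\pi_1(\mathcal H_{n,k,\bar\FF_q}^{\circ})$, which in turn surjects both onto $\pi_1(\mathcal H_{n,k,\bar\FF_q})$ (via the open immersion) and onto $\pi_1(\mathcal H_{n-1,k,\bar\FF_q})$ (via the connected-fibre criterion). Let $H\subseteq G_n\times G_{n-1}$ be the resulting joint monodromy image: the task reduces to showing $(-\mathrm{id},-\mathrm{id})\in H$, since by Lemma \ref{monodromy-of-hypersurfaces}(2) any lift of such a pair to $\pi_1(\mathcal M_{n,k,\bar\FF_q})$ then acts as $-\mathrm{id}$ on both the sub $h_{n-1}^{*}\operatorname{Prim}_{n-1,k}$ and the quotient $h_n^{*}\operatorname{Prim}_{n,k}$ of $R^{n-1}pr_{\mathcal M,X!}\Ql$, and therefore with all eigenvalues $-1$ on this sheaf.

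The main step, which I expect will be the principal obstacle, is to prove that $H = G_n \times G_{n-1}$. Since $H$ surjects onto $G_{n-1}$, it suffices to show $G_n\times \{e\}\subseteq H$, equivalently that the image in $G_n$ of $\ker(\pi_1(\mathcal H_{n,k,\bar\FF_q}^{\circ})\to \pi_1(\mathcal H_{n-1,k,\bar\FF_q}))$ is the whole of $G_n$. Because this kernel is generated by $\pi_1$ of fibres, what must be shown is that the monodromy on $\operatorname{Prim}_{n,k}$ of the subfamily of smooth $\overline X$ with a fixed hyperplane section $D$ at infinity already exhausts $G_n$. My approach here is a Lefschetz-pencil construction inside the subfamily: starting from a generic $\overline X_0=\{F_0=0\}$ with $F_0|_{x_0=0}$ defining $D$, the one-parameter family $\overline X_s=\{F_0+s\cdot x_0 H = 0\}$ with $H$ a generic polynomial of degree $k-1$ sits inside the subfamily, is a Lefschetz pencil, and its nodes occur at general points of $\mathbb P^n$ away from $D$. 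The associated vanishing cycles are genuine Picard--Lefschetz vanishing cycles for degree-$k$ hypersurfaces in $\mathbb P^n$. Since the subfamily-monodromy image in $G_n$ is a normal subgroup of $G_n$, it contains the entire $G_n$-orbit of these vanishing cycles; by the standard argument underlying Beauville's theorem, the reflections in this orbit already generate $G_n$, which closes the argument.
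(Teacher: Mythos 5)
Your reduction coincides with the paper's: by Lemma \ref{monodromy-of-hypersurfaces} it suffices to find $(-\mathrm{id},-\mathrm{id})$ in the joint monodromy image $H\subseteq G_n\times G_{n-1}$ of $h_n^*\operatorname{Prim}_{n,k}\oplus h_{n-1}^*\operatorname{Prim}_{n-1,k}$, and both arguments do this by showing $H$ is the full product. Where you genuinely diverge is in how the full product is obtained. The paper's step here is purely group-theoretic: exactly one of $n-1$, $n-2$ is even, so one factor is a full symplectic group and the other a full orthogonal group (Katz--Sarnak), and Goursat's lemma plus the absence of exceptional isomorphisms between symplectic and orthogonal groups of the relevant ranks (excluded by $N_1\geq 6$ for $k\geq 3$, with $k=2$ done by hand) forces $H=O_{N_1}\times Sp_{N_2}$. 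You instead argue geometrically that $\ker(H\to G_{n-1})$ already surjects onto $G_n$, by producing a Picard--Lefschetz transformation in the monodromy of a fibre of $h_{n-1}$ and using normality. Your route has the merit of not depending on the orthogonal/symplectic parity dichotomy, but it buys that at the cost of the one claim you flag and do not prove: that a generic pencil $\{F_0+s\,x_0H\}$ inside the subfamily with fixed hyperplane section is a Lefschetz pencil, i.e.\ that its singular members (necessarily off $D$, since $D$ is smooth) acquire exactly one ordinary double point, and that this persists in characteristic $p>k$. This is plausible for $k\geq 3$ (the system $x_0\cdot|\mathcal O(k-1)|$ separates $2$-jets away from $x_0=0$) but it is the entire mathematical content of your alternative, and it degenerates for $k=2$, which the lemma permits.

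Two smaller remarks. The assertion that $\ker(\pi_1(\mathcal H^{\circ}_{n,k})\to\pi_1(\mathcal H_{n-1,k}))$ is generated by fundamental groups of fibres is not automatic for non-proper morphisms, but you do not need it: the fibre's image lies in the kernel, and the kernel's image in $G_n$ is normal, which is all your argument uses. Likewise, once $G_n$ is known to be the full classical group, the Zariski closure of the normal subgroup generated by a single reflection or transvection is already all of $G_n$, so the appeal to the orbit-of-vanishing-cycles argument from Beauville's theorem can be replaced by this simpler observation.
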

  
Recall that the geometric monodromy group refers to the Zariski closure of the image of the geometric fundamental group inside of the general linear group acting on the stalk at some chosen point, and is independent, up to conjugacy, of the choice of point.

  \begin{proof}[Proof of Lemma \ref{hypersurface-negation}]
  It follows from  Lemma \ref{monodromy-of-hypersurfaces} that the lisse sheaf  $R^{n-1} pr_{\mathcal M,X!}  \Ql$ is an extension of  $h_{n}^* \operatorname{Prim}_{n,k}$ by $h_{n-1}^* \operatorname{Prim}_{n-1,k}$. It therefore  suffices to show that the geometric mondromy group of  $h_{n}^* \operatorname{Prim}_{n,k} \oplus h_{n-1}^* \operatorname{Prim}_{n-1,k}$ contains an element which acts by the scalar $-1$, since  any lift of this element to the monodromy group of the extension will act as the scalar $-1$ on both sheaves and thus act with all eigenvalues $-1$ on the extension. 
  
 Let us first check that $h_n$ and $h_{n-1}$ are smooth maps with geometrically connected generic fiber. To prove this for $h_n$, first observe that the condition that the restriction of a smooth hypersurface to the hyperplane at infinity be smooth is an open condition. The map $h_n$ is the composition of this open immersion into the moduli space of smooth hypersurfaces with the universal family whose fiber over any given hypersurface is its restriction to the divisor at infinity. Because, by assumptions, these restrictions to the divisor at infinity are smooth, $h_{n}$ is a smooth morphism. For $h_{n-1}$ this is because, given any degree $k$ leading terms, the possible ways of extending them to a polynomial of degree $k$ defining a smooth hypersurface form an open subset of an affine space. Because both $\mathcal H_{n,k}$ and $\mathcal H_{n-1,k}$ are normal, both $h_n$ and $h_{n-1}$ give surjections on the \'{e}tale fundamental group. Hence the monodromy groups on $\mathcal M_{n,k}$ of $h_{n}^* \operatorname{Prim}_{n,k}$ by $h_{n-1}^* \operatorname{Prim}_{n-1,k}$ are equal to the monodromy groups of $\operatorname{Prim}_{n,k}$ and $\operatorname{Prim}_{n-1,k}$.

 Let $n_1$ be whichever of $n$ or $n-1$ is odd, and let $n_2$ be whichever of $n$ or $n-1$ is even. Let $N_1$ be rank of $\operatorname{Prim}_{n_1,k}$ and let $N_2$ be the rank of $\operatorname{Prim}_{n_2,k}$. Then 
 it follows from \cite[Theorem 11.4.9]{KatzSarnak} that 
 the geometric monodromy of $\operatorname{Prim}_{n_1,k}$ is $O_{N_1}$ unless $(k,n_1)= (3,3)$, and the geometric monodromy of $\operatorname{Prim}_{n_2,k}$ is $Sp_{N_2}$.  (Note that the exceptional case $(k,n_1)=(3,3)$ occurs only if $(k,n)=(3,3)$ or $(3,4)$.)
The monodromy group of $h_{n}^* \operatorname{Prim}_{n,k} \oplus h_{n-1}^* \operatorname{Prim}_{n-1,k}$ is a subgroup of $O_{N_1} \times Sp_{N_2}$, whose projection to both $O_{N_1}$ and $Sp_{N_2} $ is surjective.  If $k=2$ then $N_1=1, N_2=0$, and so the subgroup must be $O_1= \pm 1$ and clearly contains an element acting by $-1$. Thus we may assume $k>2$.

By Goursat's lemma, any proper subgroup of $O_{N_1} \times Sp_{N_2} $ whose projections are both surjective arises from an isomorphism between a nontrivial quotient of $O_{N_1}$ and a nontrivial quotient of $Sp_{N_2}$. Any nontrivial quotient of $Sp_{N_2} $ is either $Sp_{N_2}$ itself or its adjoint form $Sp_{N_2} / \langle \pm 1\rangle$. For either of these to be isomorphic to a quotient of the orthogonal group, there must be an exceptional isomorphism between the symplectic and orthogonal Lie algebras, which only occurs if $(N_1,N_2) =(2,3)$ or $(4,5)$. But we have $N_1= (1-1/k ) ( (k-1)^{n_1} +1)  $. This is an increasing function of $k$ and $n_1$, and we have $k \geq 3, n_1 \geq 3$ so that $N_1 \geq (2/3) ( 2^3+1) = 6$ and thus exceptional isomorphisms cannot occur.
We have therefore shown that the monodromy group of $h_{n}^* \operatorname{Prim}_{n,k} + h_{n-1}^* \operatorname{Prim}_{n-1,k}$ is simply  $O_{N_1} \times Sp_{N_2}$. Since both $O_{N_1}$ and $Sp_{N_2}$ contain an element acting by the scalar $-1$, their product does as well. 
   \end{proof}

   We are now ready to reveal the main technical result behind the proof of Proposition~\ref{odd-vanishing}.
Recall that $pr_{\mathcal M,m}$ is defined to be the projection from $ \mathcal M_{n,k} \times  (A^{kd}_m - A^{kd}_{m-1})$ to $\mathcal M_{n,k}$.
   Viewing 
 $S_{d,f}$ as a complex of sheaves on $\mathcal M_{n,k} \times \mathbb A^{kd}$, we 
have the following result.

  \begin{lemma}\label{redo-everything}  For all $i \in \mathbb Z$ we have an isomorphism  
   \[ R^i pr_{\mathcal M,m ! } S_{d,f} \cong \left(H^{a}_c(\Pconf_m,\Ql) \otimes  (R^{n-1} pr_{\mathcal M,X !} \Ql) ^{\otimes m} \otimes \sgn^{n-1} \right)^{S_m} (-m-n(d-m)), \] 
    where $a=i+ m(n-1)-2nd$. In particular, because the right hand side of this isomorphism is lisse, the left hand  side is as well. 
 \end{lemma}
  
  \begin{proof}
  To prove this statement  we must verify that the proofs of all results from Lemma~\ref{infinityvanishing} to Corollary \ref{degreemsmooth} work in the relative setting. For most steps, the verification involves simply replacing definitions and lemmas in \'{e}tale cohomology with their relative analogues. (We have elected  to avoid 
 systematically doing this in \S~\ref{s:major},   since   the relative versions are 
 notationally more cumbersome, albeit conceptually similar.) 
Here, we shall content ourselves with  explaining all the subtleties that occur in this process.
  
 The proof of Lemma \ref{infinityvanishing} barely needs to be modified. Indeed,  by excision, it suffices to check that the stalk of $S_{d,f}$ vanishes outside $\mathcal M_{k,d} \times U_{m}^{kd}$, and this is a statement about points and thus is exactly the same statement already checked in the proof of Lemma \ref{infinityvanishing}.
 
 The relative analogues of $\rho, \overline{e} , p_{2}'$, and $\pi_2$ are given by exactly the same formulas.  Having done this, the proof of Lemma \ref{residuereduction} is essentially identical, but involves avoiding compactly supported cohomology in favour of compactly supported pushforwards.
  
  In Lemma \ref{factorisation} we use maps defined by the same formulas in the relative settings,  and we apply proper base change and the K\"{u}nneth formula in exactly the same way. 
  
  Lemma \ref{powervanishing} is a result about stalks and so does not need to be modified at all.
  
  The analogue of  $H^{*,\mathrm{red}}_c(X, \mathbb Q_\ell)$ is the mapping cocone of the trace map $$
  Rpr_{\mathcal M,X!}  \Ql \to \Ql[-2 (n-1) ] (n-1).
  $$
  By Lemma \ref{monodromy-of-hypersurfaces}, it is equal to $R^{n-1} pr_{\mathcal M,X!}  \Ql [- (n-1)]$.  Lemma \ref{redcoh} can be done with a relative excision sequence, and then for the vanishing in top degree we can use the same calculation on stalks. 
  
  In Lemma \ref{mainterm} we no longer prove that $\overline{S}_{1,f}$ is a constant complex, but rather a complex pulled back from $\mathcal M_{n,k}$. The same proof works, however.
  
  The proof of Lemma \ref{residuecalculation} (and thus Corollary \ref{degreemcalculation}) is the same. The proof now involves the compactly supported pushforward from $\mathcal M_{k,d} \times \Pconf_m$ to $\mathcal M_{k,d}$ of the pullback of the complex $\left( R^{n-1} pr_{\mathcal M,X!}  \Ql [- (n-1)]\right)^{\otimes m}$, rather than simply the cohomology of $\Pconf_m$ with coefficients in a constant complex. In order to handle it, we apply the projection formula to deduce that this is  the complex
  $\left( R^{n-1} pr_{\mathcal M,X!}  \Ql [- (n-1)]\right)^{\otimes m}$ 
  tensored with the compactly supported pushforward from $\mathcal M_{k,d} \times \Pconf_m$ to $\mathcal M_{k,d}$ 
  of   the constant sheaf, then smooth or proper base change to deduce that the compactly supported pushforward from $\mathcal M_{k,d} \times \Pconf_m$ to $\mathcal M_{k,d}$ is the constant complex $H^*_c (\Pconf_m , \Ql)$.  This is enough to deduce that 
  $R  pr_{\mathcal M,m ! } S_{d,f}$ is equal to 
 \begin{align*}
& \left( H_c^* (\Pconf_m,\Ql) \otimes  (R^{n-1} pr_{\mathcal M,X !} \Ql) ^{\otimes m} \otimes \sgn^{n-1} \right)^{S_m}\\
&\hspace{4cm}
\times  [ -2m - 2n (d-m) - m (n-1) ] (-m-n(d-m)) .\end{align*} 

Finally,
the analogue of Lemma \ref{supersmooth} follows from Lemma \ref{monodromy-of-hypersurfaces} and this immediately gives the analogue of Corollary \ref{degreemsmooth}, which is the statement of this lemma.

To understand this isomorphism, one can also take the explicit diagram Figure \ref{fig:zig} and translate each step into the relative setting. For (1), (3), (5), and (7) this involves the relative analogues of excision, the projection formula, and the trace map. Maps (2) and (6) arise from the relative versions of Lemmas \ref{residuereduction} and \ref{residuecalculation} as discussed above.  (4) still arises by definition, and (8) now follows from Lemma \ref{monodromy-of-hypersurfaces}(1).
     \end{proof}

We now have everything in place to establish Proposition 
\ref{odd-vanishing}.
Let $\overline{pr}_{\mathcal M, d}$ be the projection from $\mathcal M_{n,k} \times A^{kd}_d$ to $\mathcal M_{n,k}$. Applying excision to the filtration of $\mathcal M_{n,k} \times  A^{kd}_d$ into the descending sequence of open sets 
$\mathcal M_{n,k}  \times (A^{kd}_d - A^{kd}_{m-1})$ for $m \in \{0,\dots,d\}$,  
we obtain a filtration of $R \overline{pr}_{\mathcal M, d! } S_{d,f}$ whose associated graded objects are $R pr_{\mathcal M, m!} S_{d,f}$,  for $0\leq m\leq d$. This filtration produces a spectral sequence. 
Restricted to any point of $\mathcal M_{n,k}$, $R \overline{pr}_{\mathcal M, d! } S_{d,f}$ is $H^*_c(  A^{kd}_d , S_{d,f})$ and  $R pr_{\mathcal M, m!} S_{d,f}$ is $H^*_c( A^{kd}_m -  A^{kd}_{m-1}, S_{d,f})$. Thus, by  restricting to a point,  we obtain the spectral sequence of Proposition \ref{majormain}.

The differentials of a spectral sequence whose first page consists of lisse sheaves $\mathcal M_{n,k}$ must be morphisms of lisse sheaves. Indeed, all of the pages are 
subquotients of the first page, which consists of lisse sheaves. 
Therefore the differentials commute with the action of $\pi_1 (\mathcal M_{k,d})$, and thus with its Zariski closure, the monodromy group. By Lemma~\ref{hypersurface-negation}, there is an element of the monodromy group that acts on $R^{n-1} pr_{\mathcal M,X !} \Ql$ with all eigenvalues $-1$. It follows from Lemma \ref{redo-everything} that it 
acts on $R pr_{\mathcal M, m! } S_{d,f}$ with all eigenvalues $(-1)^m$. The differential on the $r$th page of the spectral sequence is a map from (a subquotient of) $R^i pr_{\mathcal M, m! } S_{d,f}$ to (a subquotient of)  $R^{i+1} pr_{\mathcal M, (m+r)! } S_{d,f}$.  Because the differential commutes with this element of $\pi_1$ but maps from an $(-1)^m$-eigenspace to an $(-1)^{m+r}$-eigenspace, the differential must vanish if $r$ is odd. This therefore establishes Proposition \ref{odd-vanishing}.
    
  \section{The geometric minor arcs: geometry}
\label{s:minor1}  

We now turn to the minor arcs. 
In the Hardy--Littlewood circle method, the key step is to prove a bound for the value of the exponential sum at each point inside the minor arcs. This is then used to bound the integral of this exponential sum over the union of all minor arcs. Analogously, in our setting, it will suffice to bound the highest degree in which the stalk cohomology of $S_{d,f}$ is non-vanishing at each point of $\mathbb A^{kd} - A^{kd}_d$, which will then be used to bound the highest degree in which $H^*_c( \mathbb A^{kd} - A^{kd}_d, S_{d,f})$ is non-vanishing and thus to prove Proposition \ref{minormain}. Throughout this section, the highest degree in which a cohomology group is non-vanishing will be the analogue of the size of a sum in the usual circle method.

Classically, the way that exponential sums are bounded is via the iterative method of Weyl differencing. This  reduces the problem of bounding the exponential sum of a multivariable polynomial over a compact region to bounding the number of points where a certain multilinear form associated to that polynomial takes small values. In general this multilinear form is handled via methods from the geometry of numbers (through the ``shrinking lemma''), while in the case of  diagonal polynomials  it can be handled more directly.  
The exponential sum we are applying Weyl differencing to is a degree $k$ polynomial in $n$ variables, each variable itself a polynomial in $\mathbb F_q[T]$ of degree $d$. However, the Weyl differencing argument uses only the additive structure of the variables. As an additive group, the ring of polynomials in many variables can simply be viewed as  a vector space over $\mathbb F_q$. For this reason, in this section, it will be more convenient to view our polynomial as a polynomial of degree $k$ in $dn$ variables,
each an element of $\mathbb F_q$.
(This is similar to how, in additive combinatorics, one often takes a vector space over a finite field as a model for the integers.)

Our geometric  analogue of Weyl differencing will reduce the bound for the top degree of non-vanishing stalk cohomology of $S_{d,f}$ to bounding the top degree of non-vanishing cohomology of a variety  $V(G)$ defined over $\FF_q	$, for 
a certain   polynomial $G=G_{(b_1,\dots,b_{kd})}$ defined over $\FF_q$ and indexed by 
 $(b_1,\dots,b_{kd})\in \mathbb A^{kd}$. 
The association of the variety to a given polynomial is  governed by the following definition.

\begin{definition}\label{def:VG}
Let 
$
G\in \FF_q[x_1,\dots,x_N]$ be a  polynomial of degree $k$. Then 
  $V(G)$ is defined to be the set of 
$(\y^{(1)},\dots,\y^{(k-1)})\in (\AA^N)^{k-1}$
such that 
\begin{equation}\label{eq:pear}
\sum_{\ve_1,\dots,\ve_{k-1}\in \{0,1\}} (-1)^{\ve_1+\dots+\ve_{k-1}} 
G(\x+\ve_1 \y^{(1)}+\dots+ \ve_{k-1}\y^{(k-1)})
\end{equation}
is a constant function of $\x$.
\end{definition}

In fact, the situation is very favourable in the geometric  setting, since the top degree of non-vanishing (compactly-supported) cohomology of a variety is twice the dimension of that variety. Thus  we can express our version of Weyl differencing as bounding cohomology in terms of the dimension of $V(G)$. This is advantageous: by Lang--Weil, the problem of bounding the dimension of $V(G)$ can be reduced to bounding the number of points in $V(G)$, which is exactly the counting problem solved in the usual circle method. After this reduction, we can follow the function field version of the  classical circle method directly, rather than having to develop a geometric version. We will discuss the geometric Weyl differencing step in this section and the less geometric remainder of the problem in the next section.

\medskip
  
We continue with the convention that 
$\ell$ is an arbitrary prime and that 
$\FF_q$ is a finite field such that 
$\chaar(\FF_q)=p>k$. 
The main aim of this section is to lay down the tools for bounding the ``cohomological dimension of a polynomial'', in the following sense.

\begin{definition} \label{4.1} 
Let $G$ be a polynomial in $N$ variables $x_1,\dots,x_N$ over $\mathbb F_q$. 
We denote by  $cd(G)$  the largest $i$ such that 
$H^i_c (\mathbb A^N, \mcL_\psi(G)) \neq 0$.
\end{definition}

Here, we have begun to use the alternate notation $\mcL_\psi(G) $ for $G^* \mcL_\psi$, as it is more convenient for the remaining calculations.
As noted above, we can think of this cohomological dimension as being the size of the exponential sum associated to a polynomial $G\in \FF_q[x_1,\dots,x_N]$.
In order to prove our version of Weyl's inequality, we shall need analogues of certain self-evident facts
about the size of exponential sums. Let  $\psi$ be
 a non-trivial additive character of $\mathbb F_{q}$.
The first fact is the identity \[ \left| \sum_{ \x \in \mathbb F_q^N} \psi( G(\x) )\right| = \left|\sum_{\x \in \mathbb F_q^N} \psi ( -G(\x) )\right|,\] 
which follows since  the two sums are complex conjugates.
The second fact  is the  basic
change of variables identity
\[ \left| \sum_{\x \in \mathbb F_q^N} \psi ( G(\x)) \right|  \left| \sum_{\x \in \mathbb F_q^N} \psi ( -G(\x))  \right|  = \left |   \sum_{\x, \y \in \mathbb F_q^N} \psi( G( \x) - G(\x+\y )) \right|  .\] 
We shall prove that the notion introduced in
Definition \ref{4.1} satisfies the cohomological analogues of these two properties.

\begin{lemma}\label{conjugation} Assume that $\ell$ has even order in the multiplicative group modulo $p$. Then $cd(G(x_1,\dots,x_N))=cd(-G(x_1,\dots,x_N))$. 
\end{lemma}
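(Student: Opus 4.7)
The plan is to derive $cd(G) = cd(-G)$ from a Galois symmetry on the coefficient ring. First I would rewrite
\[
\mcL_\psi(-G) = (-G)^*\mcL_\psi = G^*\mcL_{\psi^{-1}} = \mcL_{\psi^{-1}}(G),
\]
using the standard identity $n^*\mcL_\psi = \mcL_{\psi^{-1}}$, where $n:\AA^1 \to \AA^1$ is negation (which follows from $n^*\mcL_\psi \otimes \mcL_\psi = 0^*\mcL_\psi = \Ql$). Hence it suffices to show that $H^i_c(\AA^N,\mcL_\psi(G))$ and $H^i_c(\AA^N,\mcL_{\psi^{-1}}(G))$ have the same dimension for every $i$, so that their maximal indices of nonvanishing coincide.

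Next I would invoke the standard Galois action on Artin--Schreier sheaves. Take coefficients in $E = \Ql(\zeta_p)$, which is where $\mcL_\psi$ is naturally defined. The Galois group $\Gal(E/\Ql)$ is cyclic of order equal to the multiplicative order of $\ell$ modulo $p$, generated by $\sigma_\ell : \zeta_p \mapsto \zeta_p^\ell$. By the hypothesis that this order is even, there is a unique element $\tau \in \Gal(E/\Ql)$ of order two; it satisfies $\tau(\zeta_p) = \zeta_p^{\ell^{\ord(\ell)/2}} = \zeta_p^{-1}$. Because the construction $\psi \mapsto \mcL_\psi$ is functorial in the coefficient ring, applying $\tau$ converts $\mcL_\psi$ into $\mcL_{\tau \circ \psi} = \mcL_{\psi^{-1}}$, and hence also converts $\mcL_\psi(G)$ into $\mcL_{\psi^{-1}}(G)$.

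\'Etale cohomology with compact support commutes with extension of scalars on the coefficient ring, so $\tau$ induces a $\tau$-semilinear isomorphism of $E$-vector spaces
\[
H^i_c(\AA^N,\mcL_\psi(G)) \;\cong\; H^i_c(\AA^N,\mcL_{\psi^{-1}}(G))
\]
for every $i$. A $\tau$-semilinear bijection preserves $E$-dimension, so the two sides vanish in the same degrees, which yields $cd(G) = cd(-G)$.

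The main, essentially bookkeeping, obstacle is that $\mcL_\psi$ must be handled with $E$-coefficients (or $\bar\Ql$ with a fixed embedding of $\zeta_p$) rather than with $\Ql$-coefficients per se, and one must verify that the Galois conjugation action on sheaves this entails really sends $\mcL_\psi$ to $\mcL_{\psi^{-1}}$. The hypothesis on the order of $\ell$ modulo $p$ is precisely what places $\psi^{-1}$ in the $\Gal(E/\Ql)$-orbit of $\psi$; without it no such $\tau$ exists and this line of argument breaks down.
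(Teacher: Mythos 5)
Your proof is correct and takes essentially the same route as the paper: both extend scalars to $\Ql(\zeta_p)$, locate the order-two Galois element that inverts $\zeta_p$ (you identify it as the unique involution in the cyclic group, while the paper writes it as $\operatorname{Frob}_\ell^r$ for the appropriate $r$ — the same element), use functoriality to get a semilinear isomorphism between $H^i_c(\AA^N,\mcL_\psi(G))$ and $H^i_c(\AA^N,\mcL_{\psi^{-1}}(G)) = H^i_c(\AA^N,\mcL_\psi(-G))$, and conclude that the non-vanishing degrees agree. Your derivation of $\mcL_\psi(-G)=\mcL_{\psi^{-1}}(G)$ via the negation pullback is slightly more explicit than the paper's, but this is a bookkeeping difference, not a different argument.
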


\begin{proof} Since  $\ell$ has even order, some power $\ell^r$ of $\ell$ has order exactly $2$ mod $p$. Hence $\ell^r \equiv -1$ modulo $p$.
The sheaf $\mcL_\psi$ can be defined over every $\ell$-adic coefficient field that includes the $p$th roots of unity. Its definition commutes with extension by scalars from one such coefficient field to another. Using the fact that  the property $H^i_c (\mathbb A^N,  \mcL_\psi(G)) \neq 0$ commutes with such an extension, we may assume the coefficient field is $\mathbb Q_\ell(\mu_p)$.

There is an automorphism $\operatorname{Frob}_\ell^r$ of $\mathbb Q_\ell(\mu_p)$, which acts as multiplying by $\ell^r$ on $\mu_p$. Thus it acts by  sending any character valued in the $p$th roots of unity to the dual character. Applying this automorphism to $\mcL_\psi(G)$, we obtain $ \mcL_{\psi^{-1}}(G) =  \mcL_\psi(-G)$.  By functoriality of cohomology in the coefficient sheaf, we obtain a $\mathbb Q_\ell(\mu_p)$-semilinear automorphism $$H^i_c (\mathbb A^N, \mcL_\psi(G))\to H^i_c (\mathbb A^N,  \mcL_\psi(-G)).
$$
Thus one of these two groups is non-vanishing if and only if the other is. \end{proof}

\begin{lemma}\label{shearing}We have the identities 
\begin{align*}cd(G(x_1,\dots,x_N)) &+ cd(-G(x_1,\dots,x_N))\\ &= cd ( G(x_1,\dots,x_N) - G(x_{N+1},\dots,x_{2N}))\\ &= cd ( G(x_1,\dots,x_N) - G(x_1+x_{N+1},\dots,x_N + x_{2N})).
\end{align*}
 \end{lemma}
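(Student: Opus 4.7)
The plan is to handle the two equalities separately, with each resting on a different basic formal property of \'etale cohomology with coefficients in an Artin--Schreier sheaf.

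For the first equality, I would use the K\"{u}nneth formula. Write $\mathbb{A}^{2N} = \mathbb{A}^N \times \mathbb{A}^N$ with coordinates $\mathbf{x} = (x_1, \dots, x_N)$ on the first factor and $\mathbf{y} = (x_{N+1}, \dots, x_{2N})$ on the second. Since $G(\mathbf{x}) - G(\mathbf{y})$ is a sum of a pullback from each factor, and $\mathcal{L}_\psi$ is multiplicative in its argument, the sheaf $\mathcal{L}_\psi(G(\mathbf{x}) - G(\mathbf{y}))$ is isomorphic to the external tensor product $\mathcal{L}_\psi(G) \boxtimes \mathcal{L}_\psi(-G)$. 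The K\"{u}nneth formula then yields
$$H^i_c(\mathbb{A}^{2N}, \mathcal{L}_\psi(G(\mathbf{x}) - G(\mathbf{y}))) = \bigoplus_{j+k=i} H^j_c(\mathbb{A}^N, \mathcal{L}_\psi(G)) \otimes H^k_c(\mathbb{A}^N, \mathcal{L}_\psi(-G)).$$
Setting $a = cd(G)$ and $b = cd(-G)$, the right-hand side vanishes whenever $i > a+b$, because in any summand with $j+k>a+b$ at least one of $j>a$ or $k>b$ must hold. For $i = a+b$, the single summand with $j=a$, $k=b$ is a tensor product of two non-zero $\mathbb{Q}_\ell$-vector spaces and hence is non-zero. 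Therefore $cd(G(\mathbf{x}) - G(\mathbf{y})) = a + b$, which is the first equality.

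For the second equality, I would apply a linear change of variables. Consider the automorphism $\phi \colon \mathbb{A}^{2N} \to \mathbb{A}^{2N}$ defined by $\phi(\mathbf{x}, \mathbf{y}) = (\mathbf{x}, \mathbf{x} + \mathbf{y})$, whose inverse is $(\mathbf{x}, \mathbf{z}) \mapsto (\mathbf{x}, \mathbf{z} - \mathbf{x})$. The pullback of the function $(\mathbf{x}, \mathbf{z}) \mapsto G(\mathbf{x}) - G(\mathbf{z})$ under $\phi$ is $(\mathbf{x}, \mathbf{y}) \mapsto G(\mathbf{x}) - G(\mathbf{x} + \mathbf{y})$, so by functoriality of $\mathcal{L}_\psi$ we obtain an isomorphism of sheaves
$$\phi^* \mathcal{L}_\psi(G(\mathbf{x}) - G(\mathbf{z})) \cong \mathcal{L}_\psi(G(\mathbf{x}) - G(\mathbf{x} + \mathbf{y})).$$
Since $\phi$ is an isomorphism of varieties, the induced map on compactly supported cohomology is an isomorphism in every degree, which gives the second equality.

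There is no serious obstacle here; the two equalities are formal consequences of K\"{u}nneth and invariance under isomorphism. The only point to keep in mind is the additivity property of $\mathcal{L}_\psi$ under addition of functions, which makes both steps clean: it lets us split the sheaf over a product in the first step and pull it back along the shear $\phi$ in the second step.
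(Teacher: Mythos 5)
Your proposal is correct and follows essentially the same route as the paper: the first equality via the K\"unneth decomposition of $\mathcal{L}_\psi(G(\mathbf{x})-G(\mathbf{y}))$ as an external tensor product (with your explicit check that the top-degree summand is a tensor product of non-zero vector spaces, which the paper leaves implicit), and the second via the invertible shear $(\mathbf{x},\mathbf{y})\mapsto(\mathbf{x},\mathbf{x}+\mathbf{y})$. No gaps.
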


\begin{proof} 
The first identity follows from the K\"{u}nneth formula
\begin{align*}
H^i_c(\mathbb A^{2N}, ~
& \mcL_\psi( G(x_1,\dots,x_N) - G(x_{N+1},\dots,x_{2N}))) \\ 
&= H^i_c(\mathbb A^N \times \mathbb A^N, \mcL_\psi(G(x_1,\dots,x_N) \otimes \mcL_\psi (-G(x_{N+1}\dots,x_{2N}))) \\
&= \bigoplus_{j+k=i} H^j_c(\mathbb A^N, \mcL_\psi(G)) \otimes H^k_c(\mathbb A^N,\mcL_\psi(-G)). 
\end{align*}
Thus it follows that the largest $i$ where $H^i_c(\mathbb A^{2N}, \mcL_\psi( G(x_1,\dots,x_N) - G(x_{N+1},\dots,x_{2N})))$ is non-zero is equal to  the largest $j$ where $H^j_c(\mathbb A^N, \mcL_\psi(G)) \neq 0$ plus the largest $k$ where $ H^k_c(\mathbb A^N,\mcL_\psi(-G))\neq 0$.

The second identity follows since  $G(x_1,\dots,x_N) - G(x_{N+1},\dots,x_{2N})$
is related to  $G(x_1,\dots,x_N) - G(x_1+x_{N+1},\dots, x_N + x_{2N})$ via an invertible change of variables. Thus the associated cohomology groups are isomorphic, and are non-vanishing in the same degrees. 
\end{proof}

We 
 now come to record a general algebraic geometry argument, which adapts the standard analytic strategy of Weyl differencing to the task of bounding the cohomological dimension of a polynomial. Recall Definition \ref{def:VG} and the particular variety $V(G)$ that is associated to any polynomial
$
G\in \FF_q[x_1,\dots,x_N]$ of degree $k$.
The following is the main result of this section.

\begin{prop}\label{weyl} 
Assume that $\ell$ has even order in the multiplicative group modulo $p$ and let $G\in \FF_q[x_1,\dots,x_N]$  have degree $\leq k$.
Then 
$$
cd(G)\leq \frac{dd(G)+N(2^{k-1}-(k-1))}{2^{k-2}},
$$
where $dd(G)=\dim V(G)$.
\end{prop}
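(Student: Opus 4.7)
The plan is to induct on $k$. The base case $k = 1$ is immediate: for a linear polynomial the prescribed bound reduces to $cd(G) \leq 2N$, which holds since $\AA^N$ has cohomological dimension $2N$. For the inductive step with $k \geq 2$, I would combine Lemmas~\ref{conjugation} and \ref{shearing} to obtain
$$2\,cd(G) = cd(G_1), \qquad G_1(\x, \y) := G(\x+\y) - G(\x),$$
a polynomial on $\AA^{2N}$ which, for each fixed $\y$, has degree at most $k-1$ in $\x$.

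Next I would estimate $cd(G_1)$ using the projection $\pi : \AA^{2N} \to \AA^N_{\y}$. By proper base change, the stalk of $R^j \pi_! \mcL_\psi(G_1)$ at $\y$ is $H^j_c(\AA^N_{\x}, \mcL_\psi(G_1(\cdot,\y)))$. Letting $Z_j \subseteq \AA^N_{\y}$ denote the support of this higher pushforward, the Leray spectral sequence combined with the vanishing of compactly supported \'etale cohomology above twice the dimension of the support of a constructible sheaf yields
$$cd(G_1) \;\leq\; \sup_j \bigl(j + 2\dim Z_j\bigr).$$

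The crucial geometric input is the identification of $V(G_1(\cdot, \y))$ with a fibre of $V(G)$. Because the iterated finite differences appearing in Definition~\ref{def:VG} commute, a direct computation shows that $(\z_1, \dots, \z_{k-2}) \in V(G_1(\cdot, \y))$ if and only if $(\y, \z_1, \dots, \z_{k-2}) \in V(G)$. Hence $V(G_1(\cdot, \y))$ is exactly the fibre over $\y$ of the projection $V(G) \subseteq (\AA^N)^{k-1} \to \AA^N$ onto the first factor, and the standard fibre-dimension inequality yields $\dim\{\y : \dim V(G_1(\cdot,\y)) \geq r\} \leq dd(G) - r$. Combining this with the inductive hypothesis
$$cd(G_1(\cdot,\y)) \;\leq\; \frac{dd(G_1(\cdot,\y)) + N\bigl(2^{k-2}-(k-2)\bigr)}{2^{k-3}}$$
applied to the degree $\leq k-1$ polynomial $G_1(\cdot, \y)$ in $N$ variables, I obtain
$$\dim Z_j \;\leq\; \min\Bigl\{\,N,\ dd(G) - j\cdot 2^{k-3} + N\bigl(2^{k-2}-(k-2)\bigr)\,\Bigr\}.$$

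Maximising $j + 2\dim Z_j$ over $j$ balances the two regimes at $j^{\ast} = \bigl(dd(G) + N(2^{k-2}-(k-1))\bigr)/2^{k-3}$, giving $cd(G_1) \leq j^{\ast} + 2N$; dividing by two and simplifying recovers the claimed bound. The main obstacle will be extracting precisely the constant $2^{k-1}-(k-1)$ in the numerator: the naive estimate $\dim Z_j \leq N$ used throughout is too weak by $N$, so one must invoke the sharp fibre-dimension bound for $V(G)$ in the large-$j$ regime and carefully balance it against the support bound at the optimal $j^{\ast}$.
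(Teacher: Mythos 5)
Your proposal is correct and follows essentially the same route as the paper: induction on $k$ via Lemmas~\ref{conjugation} and~\ref{shearing}, identification of $V(G(\x)-G(\x+\y_0))$ with the fibre of $V(G)\to\AA^N$, the fibre-dimension inequality, and a Leray-type optimisation (you parametrise it by the supports $Z_j$ of the higher pushforwards, where the paper stratifies $\AA^N$ by fibre dimension and applies excision, but the two suprema coincide and yield the same constant). The only point to make explicit is the base case for non-constant linear $G$, where $V(G)=\emptyset$, $dd(G)=-\infty$, and one needs the standard vanishing $H^*_c(\AA^N,\mcL_\psi(G))=0$ so that the inductive hypothesis at $k=2$ retains its dependence on $dd$.
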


For some intuition about this, recall that the first step in the classical Weyl differencing argument takes the form 
\begin{align*} 
\left| \sum_{\x \in \mathbb F_q^N}\psi ( G(\x) ) \right|^2 &=  
\left| \sum_{\x \in \mathbb F_q^N}\psi( G(\x) ) \right| \left| \sum_{\x \in \mathbb F_q^N}\psi ( -G(\x) ) \right|\\ 
&=  \left| \sum_{\x, \y \in \mathbb F_q^N}\psi ( G(\x) - G(\x + \y) ) \right|\\
&
\leq \sum_{ \y \in \mathbb F_q^N } \left| \sum_{\x\in \FF_q^N}\psi ( G(\x) - G(\x + \y) ) \right| .
\end{align*} 
The key point is that, for any fixed $\y$, the degree of $G( \x) - G(\x+ \y)$ 
is one less than then the degree of $G$, 
as a polynomial in $\x$. Thus we can reduce an exponential sum of degree $k$ polynomials to many exponential sums of degree $\leq k-1$ polynomials. Iterating, we may reduce all the way to exponential sums of degree $\leq 1$, which cancel if the degree is exactly $1$ but do not cancel if the degree is $0$. The 
 relevance of  $V(G)$ is that is precisely the set where we end up in a degree $0$ function after $k-1$ iterations.

In the geometric argument, most steps proceed by close analogy to their classical counterparts. One subtlety occurs in the step where we bound the absolute value of a sum by the absolute values of each of its terms. This will be replaced by an argument in which  we stratify the base of some family and then bound the cohomological dimension of a sheaf on the total space in terms of the cohomological dimension of the strata and the cohomological dimension of the restriction of that sheaf to the fibers. This stratification is needed to handle the fact that our cohomological dimension bounds for different fibers may vary. (For the same reason we will use a further stratification  at the end of this section to deduce Proposition \ref{minormain}.)

\begin{proof}[Proof of Proposition \ref{weyl}]
The proof is by induction on $k$.
Suppose first that  $k=1$.
Then we claim that  $dd(G)=0$ if $G$ is constant and $dd(G)=-\infty$ otherwise. 
When $k=1$ there are no variables 
$\y^{(1)}, \dots, \y^{(k-1)}$.
Thus the closed set under consideration is a subset of  a point, and the alternating sum 
\eqref{eq:pear}
is simply $G(x_1,\dots,x_n)$. 
The claim follows, since it is now clear that $V(G)$ is a point if  $G$ is constant and $V(G)$ is the empty set otherwise.
If $G$ is constant we must check  that
\[cd(G) \leq  \frac{ 0+N (2^{1-1} -(1-1)) }{2^{1-2}}=2N.\] 
But this follows  from the cohomological dimension of an $N$-dimensional variety being $2N$. Moreover, we have  $cd(G) \leq - \infty$ if $G$ is non-constant, since  then  the compactly supported cohomology of the associated Artin--Schreier sheaf vanishes.

Now assume that the result is already known for polynomials of degree $k-1$. Let $G$ be a polynomial of degree $k\geq 2$. Then for any $\y_0=(y_{1},\dots,y_{N})$, the difference $G(\x) - G(\x+\y_0)$ is a polynomial of degree $k-1$ in $x_1,\dots,x_{N}$. The variety $V(G)$  in the definition of $dd(G)$ admits a map $V(G)\to \mathbb A^N$ along the coordinates $\y$, whose fiber over a point $\y_0\in \AA^N$ is the variety in the definition of $dd ( G(\x) - G(\x+\y_0))$.  Choose a stratification of $\AA^N$ whose strata $W_j$ are varieties and such that the fiber dimension of this map is constant on each stratum $W_j$. (This is possible since the fiber dimension is constructible, by  \cite[Ex.~II.3.22(e)]{hart}, and any constructible function can be made constant on a stratification.) 

Viewing
$G(\x) -G(\x+\y)$ as a polynomial in $2N$ variables $\x,\y$, 
the cohomological dimension $cd ( G(\x) -G(\x+\y))$ is defined to be the maximum $i$ such that $H^i_c(\AA^n \times \AA^n, \mathcal L_\psi(G(\x) - G(\x+\y)))\neq 0$. 
By iteratively applying excision to the chosen stratification, this is at most the supremum over strata $W_j$ of the maximum $i$ such that $H^i_c( \AA^n \times W_j, \mathcal L_\psi(G(\x) - G(\x+\y)))\neq 0$. So it suffices to bound this $i$ for all possible strata $W_j$.

Let $W_j$ be a stratum of dimension $r$. Let $\pi: \AA^n \times W_j \to W_j$ be the projection map. Then by the Leray spectral sequence  with compact supports we have 
\[H^i_c( \AA^n \times W_j, \mathcal L_\psi(G(\x) - G(\x+\y))) = H^i_c(W_j, R \pi_! \mathcal L_\psi(G(\x) - G(\x+\y))).\] 
Since  $W_j$ has cohomological dimension at most $2r$, this vanishes in degrees greater than $2r+s$, where $s$ is the 
highest degree in which the cohomology of the complex $R\pi_! \mathcal L_\psi(G(\x) - G(\x+\y))$ is non-zero. By proper base change and the induction hypothesis, we have  
\begin{align*}
s&\leq \max_{\y_0\in W_j}
cd(G(\x) - G(\x+\y_0))\\
&\leq  \max_{\y_0\in W_j}
\frac{ dd(G(\x) - G(\x+\y_0))+ N (2^{k-2}- (k-2))}{2^{k-3}} .
\end{align*} 
Now  $dd(G(\x) - G(\x+\y_0))$ is the fiber dimension of $V(G)$ over $\y_0$, which is constant on $W_j$. Hence the dimension of the inverse image of $W_j$ in $V(G)$ is equal to this fiber dimension plus $r$ by \cite[Ex.~II.3.22(e)]{hart}. Because the inverse image of $W_j$ in $V(G)$ certainly has dimension at most $dd(G)$, the fiber dimension is at most $dd(G)-r$. Thus 
$H^i_c( \AA^n \times W_j, \mathcal L_\psi(G(\x) - G(\x+\y)))$
vanishes for  \[i>2r+ \frac{ dd(G)-r + N (2^{k-2}- (k-2))}{2^{k-3}} .\] 
It follows that 
\begin{align*}
cd ( G(\x) -G(\x+\y)) &\leq 
\sup_{0\leq r\leq N}   \left( 2r+ \frac{ dd(G) - r + N (2^{k-2}- (k-2))}{2^{k-3}}\right)
\\
&= \sup_{0\leq r\leq N} \frac{ dd(G) + (2^{k-2}-1) r + N (2^{k-2}- (k-2))}{2^{k-3}}\\ 
&= \frac{ dd(G) +  N (2^{k-1}- (k-1))}{2^{k-3}}. 
\end{align*}
Applying  Lemmas~\ref{conjugation} and~\ref{shearing}, we obtain
\begin{align*}
2cd(G) 
&= cd(G) + cd(-G) \\
&=  cd( G(\x) - G(\x+\y)) \\
&\leq 
\frac{ dd(G) + N (2^{k-1}- (k-1))}{2^{k-3}}.\end{align*}
The proposition follows on dividing both sides by $2$.
\end{proof}

Assume that $k>1$ and that the leading (degree $k$) terms 
of $G$ are 
$$
G_0(\x)=\sum_{i_1,\dots,i_k=1}^N a_{\mathbf{i}} x_{i_1}\dots x_{i_k},
$$
for coefficients $a_{\mathbf{i}}\in \FF_q$
which are symmetric  in the indices.
Then, 
since $\mathrm{char}(\FF_q)>k$, one confirms that 
$V(G)$ is the  set of 
$(\y^{(1)},\dots,\y^{(k-1)})\in (\AA^N)^{k-1}$ for which  
$$
\sum_{i_1,\dots,i_{k-1}=1}^N a_{i_1,\dots,i_{k-1},i} y_{i_1}^{(1)}\dots y_{i_{k-1}}^{(k-1)}=0, \quad \text{for all $1\leq i\leq N$.}
$$
In particular $V(G)=V(G_0)$ is an algebraic subvariety of $(\AA^{N})^{k-1}$.
Using Lang--Weil  we may transform the problem of bounding 
$dd(G)$ into a counting problem, as  follows.

\begin{lemma}\label{langweil} 
Let $G\in \FF_q[x_1,\dots,x_N]$ be a polynomial of degree $k$.
Suppose that there exists a constant $c_G>0$ such that 
$$
\#V(G)(\FF_{q^r}) \leq c_G q^{{rD}},
$$
for all positive integers $r$.
Then $dd(G)\leq D$.
\end{lemma}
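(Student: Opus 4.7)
The plan is to prove the contrapositive by a standard Lang--Weil argument. Suppose for contradiction that $D' := \dim V(G) > D$. I will produce infinitely many values of $r$ for which $\#V(G)(\FF_{q^r})$ grows strictly faster than $c_G q^{rD}$, contradicting the hypothesis.

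First I would choose a top-dimensional geometric irreducible component $W \subseteq V(G)_{\overline{\FF_q}}$. Since $V(G)$ is defined over $\FF_q$, the Galois group acts on the finite set of top-dimensional components, so $W$ is defined over some finite extension $\FF_{q^{r_0}}$. Next I would apply the Lang--Weil estimate \cite{LW} to $W$, viewed as a geometrically irreducible $\FF_{q^{r_0}}$-variety of dimension $D'$: this yields
\[
\#W(\FF_{q^{r_0 m}}) = q^{r_0 m D'} + O\!\left( q^{r_0 m (D' - 1/2)} \right)
\]
as $m \to \infty$, with implied constant depending only on the geometry of $W$ (in particular on the degree of an embedding into projective space, which we may fix once and for all). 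In particular, for all sufficiently large $m$ we have $\#W(\FF_{q^{r_0 m}}) \geq \tfrac{1}{2} q^{r_0 m D'}$.

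Since $W(\FF_{q^{r_0 m}}) \subseteq V(G)(\FF_{q^{r_0 m}})$, this gives $\#V(G)(\FF_{q^{r_0 m}}) \geq \tfrac{1}{2} q^{r_0 m D'}$ for all sufficiently large $m$. Comparing with the hypothesised upper bound $\#V(G)(\FF_{q^{r_0 m}}) \leq c_G q^{r_0 m D}$, we obtain
\[
\tfrac{1}{2} q^{r_0 m (D' - D)} \leq c_G
\]
for all large $m$. Because $D' - D \geq 1 > 0$, the left-hand side tends to infinity with $m$, producing the desired contradiction. Hence $\dim V(G) \leq D$, i.e.\ $dd(G) \leq D$.

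The only real content is invoking Lang--Weil, which applies directly since $W$ is geometrically irreducible by construction; the small subtlety is that $W$ need not be defined over $\FF_q$ itself, which is why one restricts to the subsequence of $r$'s divisible by $r_0$. No other obstacle arises.
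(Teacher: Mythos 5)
Your proof is correct and is essentially the same argument the paper gives (which is stated very tersely): pick a top-dimensional geometrically irreducible component, pass to a finite extension $\FF_{q^{r_0}}$ over which it is defined, apply Lang--Weil there to get a lower bound $\gg q^{r_0 m\,\dim V(G)}$, and compare with the hypothesised upper bound. One trivial nit: you write $D' - D \geq 1$, which uses that $D$ is an integer; the statement as given does not require this, but $D' - D > 0$ already suffices for the contradiction, so nothing breaks.
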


\begin{proof} This follows by applying the Lang--Weil estimates \cite{LW}, 
since over any finite field $\FF_{q_0}$ where the variety has a geometrically irreducible component of dimension $dd(G)$, its number of points is at least $q_0^{dd(G)} (1+o(1)) $. 
If we have $q_0^{dd(G)}(1+o(1)) \leq c_G q_0^D$ for all $q_0$ then $dd(G) \leq D$. 
\end{proof}

We are going to apply these general arguments to control $\mathcal H^{i} (S_{d,f})$  (i.e.\ the $i$th cohomology sheaf of the complex $S_{d,f}$) for our polynomial $f\in \FF_q[x_1,\dots,x_n]$ of degree $k$.
For $(b_1,\dots,b_{kd})\in \mathbb A^{kd}$, we let $ G_{(b_1,\dots,b_{kd})}$ be 
the map $e$ restricted to the point $(b_1,\dots,b_{kd})$. Thus 
$ G_{(b_1,\dots,b_{kd})}$ is
the function that takes 
$$
\left(
\left(a_{0,j}, \dots,a_{d-1,j}\right)_{1\leq j\leq n}
,(b_1,\dots,b_{kd})\right)
$$
to the coefficient of $T^{-1}$ in \[ \left( \sum_{r=1}^{kd} b_r T^{-r} \right) f\left( x_1 T^d +   \sum_{i=0}^{d-1} a_{i,1} T^i, \dots, x_n T^d  +   \sum_{i=0}^{d-1} a_{i,n} T^i\right).\]
Our geometric minor arc bound amounts to   bounding 
$dd(G_{(b_1,\dots,b_{kd})})$ when the point
$(b_1,\dots,b_{kd}) $ belongs to $(\mathbb A^{kd}-A_m^{kd})(\mathbb F_{q^r})$ for given $m$. 
This is summarised as follows.

\begin{prop}
\label{arithmeticpart} Assume that the leading terms of $f$ define a smooth hypersurface in $\mathbb P^{n-1}$ and that $ \chaar(\mathbb F_{q})>k$. 
Assume that $d\geq k-1$ and let $m$ be an integer in the range 
\begin{equation}\label{eq:raison} 
d\leq m\leq 
\frac{kd}{2}. 
\end{equation}
 Then for all $r\geq 1$ and all  $(b_1,\dots,b_{kd}) 
\in (\mathbb A^{kd}-A_m^{kd})(\mathbb F_{q^r})$, we have 
\[dd(G_{(b_1,\dots,b_{kd})})  \leq dn(k-1)  -
n\left\lfloor \frac{m}{k-1}\right\rfloor.\]
\end{prop}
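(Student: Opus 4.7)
My plan is to apply Lemma~\ref{langweil}, which reduces the dimension bound on $V(G_{(b_1,\dots,b_{kd})})$ to a uniform upper bound on its $\FF_{q^r}$-point count, and then to recast this point count as a function-field exponential sum to which the minor-arc Weyl bounds of Lee \cite{lee} and Browning--Vishe \cite{BV,BV'} apply.

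I would begin by extracting the leading form $G_0$ of $G := G_{(b_1,\dots,b_{kd})}$ as a polynomial of degree $k$ in the $N := dn$ variables $a_{i,j}$. Writing $f = f_0 + (\text{lower order})$, only $f_0$ contributes to the degree-$k$ part, giving
\[
G_0(a_{i,j}) = [T^{-1}]\,\alpha(T)\,f_0\Bigl(\textstyle\sum_i a_{i,1} T^i,\ldots,\sum_i a_{i,n} T^i\Bigr), \qquad \alpha(T) := \sum_{r=1}^{kd} b_r T^{-r}.
\]
Because $\chaar(\FF_q) > k$, the variety $V(G) = V(G_0)$ of Definition~\ref{def:VG} is the locus of $(k-1)$-tuples $(\mathbf Y^{(1)},\ldots,\mathbf Y^{(k-1)})$ of $n$-tuples of polynomials of degree $<d$ in $T$ for which the linear functional
\[
\mathbf X \longmapsto [T^{-1}]\,\alpha(T)\,\Psi_{f_0}\bigl(\mathbf X(T),\mathbf Y^{(1)}(T),\ldots,\mathbf Y^{(k-1)}(T)\bigr)
\]
vanishes identically, where $\Psi_{f_0}$ is the symmetric $k$-linear form polarizing $f_0$.

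Detecting this vanishing by orthogonality of additive characters rewrites $\#V(G)(\FF_{q^r})$ as $q^{-rN}$ times the character sum
\[
\sum_{\mathbf X,\mathbf Y^{(1)},\ldots,\mathbf Y^{(k-1)}} \psi\Bigl([T^{-1}]\,\alpha(T)\,\Psi_{f_0}(\mathbf X,\mathbf Y^{(1)},\ldots,\mathbf Y^{(k-1)})\Bigr),
\]
where each variable runs over $n$-tuples of polynomials in $\FF_{q^r}[T]$ of degree $<d$. This is precisely the $(k-1)$-fold Weyl-differenced sum attached to evaluating $f_0$ on $n$-tuples of polynomials at the ``frequency'' $\alpha \in \FF_{q^r}((1/T))$. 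By Lemma~\ref{majorarcproperties}(4), the hypothesis $(b_1,\ldots,b_{kd}) \notin A_m^{kd}$ is exactly the assertion that $\alpha$ has no rational approximation with denominator of degree $\leq m$ to the relevant accuracy. Feeding this Diophantine input into the function-field Weyl bounds of \cite{BV,BV',lee}, together with the smoothness of $Z$ (which, as in Birch's classical argument, forces non-degeneracy of $\Psi_{f_0}$), I expect to obtain a power saving of $q^{rn\lfloor m/(k-1)\rfloor}$ over the trivial bound $q^{rdn(k-1)}$; Lemma~\ref{langweil} then yields the claimed bound on $dd(G)$.

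The main obstacle is executing the Weyl-differencing argument so that the saving emerges with the precise shape $n\lfloor m/(k-1)\rfloor$. The divisor $k-1$ should come from the $k-1$ iterated differencings, the factor $n$ from the fact that each unit jump in denominator-degree of a rational approximation to $\alpha$ pins down a full $\FF_{q^r}^n$-worth of coefficients of each $\mathbf Y^{(s)}$, and the floor reflects that only whole jumps in denominator degree can be exploited. The lower bound $m \geq d$ and upper bound $m \leq kd/2$ should enter to keep the iterated differencing within the admissible range of polynomial degrees and to place the rational approximation in the non-degenerate regime of Lemma~\ref{majorarcproperties}(5) where uniqueness holds.
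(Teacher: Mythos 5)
Your strategy tracks the paper's almost step for step: you correctly reduce via Lemma~\ref{langweil} to a uniform point-count bound, observe that $V(G)=V(G_0)$ (valid since $\mathrm{char}(\FF_q)>k$), and recognize the count as a minor-arc quantity governed by the Diophantine properties of $\alpha = \sum_r b_r T^{-r}$. The detour through orthogonality of characters is harmless but unnecessary; the paper's Lemma~\ref{lem:N=N} shows directly that $\#V(G_0)(\FF_q)$ equals the quantity
\[
N(\alpha)=\#\{\underline\u\in\FF_q[T]^{(k-1)n}:|\u^{(i)}|<q^d,\ \|\alpha\Psi_j(\underline\u)\|<q^{-d}\text{ for all }j\},
\]
because the defining equations of $V(G_0)$ are precisely the coefficient-vanishing conditions encoded by $\|\alpha\Psi_j(\underline\u)\|<q^{-d}$.

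The genuine gap is that the crux of the matter, the exact saving $q^{n\lfloor m/(k-1)\rfloor}$, cannot simply be outsourced to the Weyl bounds in \cite{lee,BV,BV'}. The paper proves a \emph{new refinement} of Davenport's shrinking lemma in the function-field setting (Lemma~\ref{new-geometry}), giving $N(\alpha)\leq q^{(k-1)ns}N_s(\alpha)$ where $N_s$ shrinks the box to $|\u^{(i)}|<q^{d-s}$ while tightening the approximation to $\|\alpha\Psi_j(\underline\u)\|<q^{-d-(k-1)s}$. Crucially, the Diophantine step is not a soft power saving: with $s=d-\lfloor m/(k-1)\rfloor$ (non-negative precisely because $m\leq kd/2$), any $\underline\u$ counted by $N_s(\alpha)$ with some $\Psi_j(\underline\u)\neq 0$ would produce a rational approximation to $\alpha$ with denominator degree $\leq m$, contradicting $(b_1,\dots,b_{kd})\notin A_m^{kd}$ via Lemma~\ref{majorarcproperties}(4); so all counted $\underline\u$ satisfy $\Psi_j(\underline\u)=0$ exactly. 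Smoothness of $Z$ then bounds the dimension of $\{\Psi_j=0: 1\leq j\leq n\}$ by $(k-2)n$ via intersecting with the diagonal, and \cite[Lemma~2.8]{BV} gives $N_s(\alpha)\ll q^{(k-2)n(d-s)}$. Combining yields $N(\alpha)\ll q^{dn(k-1)-n(d-s)}$, which is the target. Your heuristic that each unit jump in denominator degree pins down $n$ coefficients per $\mathbf Y^{(s)}$ is not the actual mechanism; the factor $n$ arises from the $n$-fold structure in both the shrinking lemma and the dimension bound, and the floor $\lfloor m/(k-1)\rfloor$ arises from requiring $s\in\ZZ_{\geq 0}$. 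Without supplying the refined shrinking lemma and the exact-vanishing dichotomy, the argument does not close.
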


Using Lemma~\ref{langweil} as a base, we will tackle the proof of 
Proposition~\ref{arithmeticpart} through the sort of  counting arguments that feature in the usual circle method over  $\FF_q(T)$.
We defer the proof  to \S~\ref{s:minor-2} and instead proceed to show how it yields Proposition~\ref{minormain}.

  \begin{prop} 
 \label{prop:banana} 
 Assume that the leading terms of $f$ define a smooth hypersurface in $\mathbb P^{n-1}$, that $ \chaar(\mathbb F_{q})=p>k$, and that $\ell$ has even order mod $p$. Let $m\geq d\geq k-1$.
 Then $\mathcal H^{i} (S_{d,f})$ vanishes outside $A_m^{kd}$ provided that  
 \[
 i> 
 n  \left(2d -
\left\lfloor \frac{m}{k-1}\right\rfloor \frac{1}{2^{k-2}}
\right)
.\] 
 \end{prop}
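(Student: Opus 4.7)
The plan is to reduce the stalk vanishing for $\mathcal H^i(S_{d,f})$ at a geometric point $b=(b_1,\dots,b_{kd})\notin A_m^{kd}$ to a bound on the cohomological dimension $cd(G_b)$ of the restricted polynomial, and then to combine the geometric Weyl differencing bound (Proposition~\ref{weyl}) with the counting estimate (Proposition~\ref{arithmeticpart}).

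First, by proper base change applied to the projection $p_2$ in the definition $S_{d,f}=Rp_{2!}\, e^*\mathcal L_\psi$, the stalk at $b$ equals
\[
\mathcal H^i(S_{d,f})_b \;=\; H^i_c\bigl((\mathbb A^d)^n_{\overline{\mathbb F_q}},\,\mathcal L_\psi(G_b)\bigr),
\]
where $G_b$ is a polynomial of degree at most $k$ in the $N=dn$ coordinates $a_{i,j}$. By Definition~\ref{4.1}, this group vanishes for $i>cd(G_b)$, so it suffices to bound $cd(G_b)$ uniformly for geometric points $b\notin A_m^{kd}$.

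Next, I would dispose of the trivial range $m\ge kd/2$: by Lemma~\ref{majorarcproperties}(3), $A_m^{kd}=\mathbb A^{kd}$ and there is nothing to prove. We may thus assume $d\le m\le kd/2$. Any geometric point $b$ is defined over some $\mathbb F_{q^r}$, and since $A_m^{kd}$ is defined over $\mathbb F_q$, the condition $b\notin A_m^{kd}$ translates into $b\in (\mathbb A^{kd}-A_m^{kd})(\mathbb F_{q^r})$. Applying Proposition~\ref{arithmeticpart} -- and using that $dd(G_b)=\dim V(G_b)$ is a geometric invariant independent of $r$ -- yields
\[
dd(G_b) \;\le\; dn(k-1)\,-\,n\Bigl\lfloor \tfrac{m}{k-1}\Bigr\rfloor.
\]

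Finally, by the Weyl differencing bound (Proposition~\ref{weyl}) applied to $G_b$ with $N=dn$,
\[
cd(G_b) \;\le\; \frac{dd(G_b) \,+\, dn\bigl(2^{k-1}-(k-1)\bigr)}{2^{k-2}}.
\]
Substituting the previous bound, the $dn(k-1)$ contribution coming from the Weyl bound cancels the $-(k-1)$ part of $dd(G_b)$, and $dn\cdot 2^{k-1}/2^{k-2}=2dn$, leaving exactly
\[
cd(G_b) \;\le\; n\left(2d \,-\, \frac{\lfloor m/(k-1)\rfloor}{2^{k-2}}\right),
\]
as required. There is no substantive obstacle at this stage: the present proposition is a clean arithmetic combination of the two cited results. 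The serious work lies in Propositions~\ref{weyl} and~\ref{arithmeticpart} themselves -- the former via the character-twisting lemmas (Lemmas~\ref{conjugation} and~\ref{shearing}) and the stratification argument in the Weyl iteration, the latter (carried out in \S~\ref{s:minor-2}) being the circle-method-style counting problem over $\mathbb F_q(T)$.
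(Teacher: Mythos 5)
Your proposal follows the paper's argument essentially verbatim: by proper base change the stalk at $b$ is $H^i_c((\mathbb A^d)^n,\mathcal L_\psi(G_b))$, which vanishes for $i>cd(G_b)$; then Propositions~\ref{weyl} and~\ref{arithmeticpart} with $N=dn$ give the required bound, and the arithmetic at the end (cancellation of $(k-1)$ and $2^{k-1}/2^{k-2}=2$) is exactly as in the paper. Your explicit handling of the vacuous range $m\geq kd/2$ is fine and is left implicit in the paper.

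There is one small point worth flagging. The assertion ``any geometric point $b$ is defined over some $\mathbb F_{q^r}$'' is only true for \emph{closed} geometric points of $\mathbb A^{kd}$. The conclusion of the proposition is that $\mathcal H^i(S_{d,f})$ \emph{vanishes as a sheaf} on the open set $\mathbb A^{kd}-A_m^{kd}$, which means its stalks vanish at all geometric points, including those lying over non-closed points (e.g.\ the geometric generic point); these are not defined over any finite field, so the counting bound of Proposition~\ref{arithmeticpart} does not apply to them directly. The paper closes this gap with a constructibility argument: $\mathcal H^i(S_{d,f})$ is a constructible sheaf defined over $\mathbb F_q$, hence its support is a constructible set defined over $\mathbb F_q$; having shown this set has no $\mathbb F_{q^r}$-points outside $A_m^{kd}$ for every $r$, one concludes it is contained in $A_m^{kd}$, so the sheaf vanishes on the complement. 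This is routine, but it is a genuine step in passing from ``vanishing at all finite-field points'' to ``vanishing of the sheaf'', and it should be recorded.
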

 
 \begin{proof} 
 Let $i\in \NN$ be in the range recorded in the statement.
We first prove that the stalk of $\mathcal H^{i} (S_{d,f})$ vanishes at  $(b_1,\dots,b_{kd}) \in (\mathbb A^{kd}-A_m^{kd})(\mathbb F_{q^r})$, for every $r\geq 1$. Now it follows  
 immediately from the definition of $S_{d,f}$, by applying the proper base change theorem, that the stalk of $\mathcal H^i(S_{d,f})$ at a point $(b_1,\dots,b_{kd}) \in \mathbb A^{kd}(\FF_{q^r})$  is equal to 
 $
 H^i_c\left( (\mathbb A^d)^n_{\overline{\mathbb F}_{q^r}},  \mcL_\psi(G_{(b_1,\dots,b_{kd})}) \right).
 $ 
But, by the definition of  
$cd(G_{(b_1,\dots,b_{kd})})$, this vanishes for $i> cd(G_{(b_1,\dots,b_{kd})})$. 
Combining  Propositions~\ref{weyl} and ~\ref{arithmeticpart}, and taking $N=dn$,  we therefore deduce that 
\begin{align*}
cd(G_{(b_1,\dots,b_{kd})}) 
&\leq \frac{dn(k-1) -n\lfloor \frac{m}{k-1}\rfloor
 + dn (2^{k-1} - (k-1)) }{2^{k-2}} \\
&=n  \left(2d -
\left\lfloor \frac{m}{k-1}\right\rfloor \frac{1}{2^{k-2}}
\right)\\
&<i.
\end{align*}
Hence the stalk of $\mathcal H^{i} (S_{d,f})$ does indeed vanish.

The sheaf $\mathcal H^i(S_{d,f})$ is a constructible sheaf defined over $\mathbb F_{q}$. Hence the set where its stalks are non-vanishing is a constructible set defined over $\mathbb F_{q}$. We have shown that, for all finite fields $\mathbb F_{q^r}$, the set has no $\mathbb F_{q^r}$-points outside $A_m^{kd}$. It follows that the set has no points outside $A_m^{kd}$ at all, and that $\mathcal{H}^{i} (S_{d,f})$ vanishes outside $A_m^{kd}$, as desired. 
\end{proof}

We are now ready to deduce the  vanishing result for 
$H^i_c( \mathbb A^{kd} - A_{d}^{kd}, S_{d,f})$ that is recorded in 
Proposition~\ref{minormain}.
 Assume that the leading terms of $f$ define a smooth hypersurface in $\mathbb P^{n-1}$, that $ \chaar(\mathbb F_{q})=p>k$,  that $\ell$ has even order mod $p$ and that 
 $n>  2^k(k-1).$  
Appealing to Lemma~\ref{majorarcproperties}, we clearly have 
$$
\AA^{kd}-A_d^{kd}=\bigsqcup_{m= d}^{m_0} \left(A_{m+1}^{kd}-A_{m}^{kd}\right),
$$
for $m_0=\lfloor \frac{kd}{2}\rfloor-1.$
Applying excision to the increasing chain of closed subsets $A_m^{kd}$, we see that   $H^i_c( \mathbb A^{kd} - A_{d}^{kd}, S_{d,f})=0$ provided that 
for each  integer $m\in [d,m_0]$ 
we are able to show that  $H^i_c( A_{m+1}^{kd} - A_{m}^{kd}, S_{d,f})=0$.
 
On $A_{m+1}^{kd} - A_{m}^{kd}$, the 
cohomology sheaf of  $S_{d,f}$ vanishes in degrees greater than 
$$ 
n  \left(2d -
\left\lfloor \frac{m}{k-1}\right\rfloor \frac{1}{2^{k-2}}
\right), 
$$
by Proposition~\ref{prop:banana}. By the spectral sequence for the cohomology of a complex, together with the fact that the cohomology of a variety of dimension $2(m+1)$ with coefficients in any sheaf vanishes in degrees $>4(m+1)$, it follows that 
$H^i_c( A_{m+1}^{kd} - A_{m}^{kd}, S_{d,f})=0$
provided that 
\begin{align*}
i
&>
n  \left(2d -
\left\lfloor \frac{m}{k-1}\right\rfloor \frac{1}{2^{k-2}}
\right)
+ 4(m+1)\\
&\geq 
n  \left(2d -
\left\lfloor \frac{m}{k-1}\right\rfloor \frac{1}{2^{k-2}}
\right)
+ 4(k-1)\left\lfloor \frac{m}{k-1}\right\rfloor+4.
\end{align*}
Since $n>2^k(k-1)$, 
the right hand side is a non-increasing  function of $m$. Thus it suffices to check it for $m=d$, 
which thereby completes the proof of 
Proposition~\ref{minormain}.

\section{The geometric minor arcs: arithmetic}\label{s:minor-2}

This section is devoted to the remaining task of proving  Proposition~\ref{arithmeticpart}.
Assume that  the leading terms $f_0$ of $f$
define a smooth projective hypersurface $Z\subset \mathbb P^{n-1}$. Let 
  $P=(x_1: \ldots :x_n)\in Z(\FF_q)$ and 
suppose that 
$$
f_0(x_1,\dots,x_n)=\sum_{j_1,\dots,j_k=1}^n c_{\mathbf{j}} x_{j_1}\dots x_{j_k},
$$
for symmetric coefficients $c_{\mathbf{j}}\in \FF_q$ (i.e.\  $c_{\mathbf{j}}=c_{\sigma(\mathbf{j})}$ for any  $\sigma\in S_k$).

We need to investigate 
$G_{(b_1,\dots,b_{kd})}$ for 
$(b_1,\dots,b_{kd}) 
\in (\mathbb A^{kd}-A_m^{kd})(\mathbb F_{q^r})$ for any $r\in \NN$ and any $m$ in the range \eqref{eq:raison}.
It will be convenient to redefine $q^r$ to be $q$.
Writing 
$
\alpha=\sum_{r=1}^{kd}b_r T^{-r},
$
the function $G_{(b_1,\dots,b_{kd})}$ is equal to the coefficient of 
$T^{-1}$  in  $\alpha f(g_1,\dots,g_n)$, where
$
g_j(T)=
x_jT^d+
\sum_{i=0}^{d-1} a_{i,j}T^i.
$
Let us set 
$
\a=\left(a_{0,j}, \dots,a_{d-1,j}\right)_{1\leq j\leq n},
$
a vector that has 
$N=dn$ components. 
It is now clear that 
$$
G_{(b_1,\dots,b_{kd})}=\sum_{j_1,\dots,j_k=1}^n \sum_{i_1,\dots,i_k=0}^d
d_{\mathbf{j},\mathbf{i}} a_{i_1,j_1}\dots a_{i_k,j_k}=F(\a)
$$
say, where $d_{\mathbf{j},\mathbf{i}}=c_{\mathbf{j}} b_{i_1+\dots + i_k+1}$ has symmetric indices and we follow the convention that  $a_{d,j}=x_j$ for 
$1\leq j\leq n$. In particular,  $F(\a)$ is a degree $k$ polynomial in $\a$ with leading terms
$$
F_0(\a)=\sum_{j_1,\dots,j_k=1}^n \sum_{i_1,\dots,i_k=0}^{d-1}
d_{\mathbf{j},\mathbf{i}} 
a_{i_1,j_1}\dots a_{i_k,j_k}.
$$
Hence $dd(G_{(b_1,\dots,b_{kd})})=\dim V(F_0)$.

Writing  $N=dn$, we see that $V(F_0)$ is the set of 
$(\a^{(1)},\dots,\a^{(k-1)})\in (\AA^N)^{k-1}$ for which 
$$
\sum_{j_1,\dots,j_{k-1}=1}^n \sum_{i_1,\dots,i_{k-1}=0}^{d-1}
d_{(j_1,\dots,j_{k-1},j),(i_1,\dots,i_{k-1},i)}  
a_{i_1,j_1}^{(1)}\dots a_{i_{k-1},j_{k-1}}^{(k-1)}=0,
$$
for all $0\leq i \leq d-1$ and all $1\leq j\leq n$.
Let  $\mathcal{N}$ be the number of 
$(\a^{(1)},\dots,\a^{(k-1)})\in (\FF_q^N)^{k-1}$ for which this system of equations holds.  
According to Lemma~\ref{langweil},
in order to prove Proposition~\ref{arithmeticpart} 
it will suffice to show that there is a constant $c=c(d,k,n)$ such that 
\begin{equation}\label{eq:LW}
\limsup_{q\to \infty} q^{-D}\mathcal{N}\leq c,
\end{equation}
with $D=dn(k-1)-n\lfloor \frac{m}{k-1}\rfloor.$

We shall estimate $\mathcal{N}$ by reinterpreting it as a problem about 
counting $\FF_q[T]$-points on an appropriate variety. On 
$\FF_q(T)$ we have a non-archimedean absolute value $|\cdot|$, which is extended to $\FF_q((1/T))$ 
and vectors 
in the obvious way.
For any $\beta=\sum_{i\leq M}b_i T^i \in \FF_q((1/T))$
 we   put $\|\beta\|=|\sum_{i\leq -1}b_it^i|$.

Associated to $f_0$ are the multilinear forms
$$
\Psi_j(\h^{(1)},\dots,\h^{({k-1})})
=k!\sum_{j_1,\dots,j_{k-1}=1}^n c_{j_1,\dots,j_{k-1},j} 
h_{j_1}^{(1)}\dots h_{j_{k-1}}^{(k-1)},
$$
for $1\leq j\leq n$.
As above  we write 
$\alpha=\sum_{r=1}^{kd}b_rT^{-r}\in \FF_q((1/T))$ for the point associated to the vector $(b_1,\dots,b_{kd}).$
The following result underpins  our investigation of $\mathcal{N}$.

\begin{lemma}\label{lem:N=N}
Assume that 
$\mathrm{char}(\FF_q)>k$. Then  $\mathcal{N}=N(\alpha)$, where
$$
N(\alpha)=
\#\left\{ \underline{\u}\in \FF_q[T]^{(k-1)n}: 
\begin{array}{l}
|\u^{(1)}|,\dots,|\u^{(k-1)}|<q^d\\
\|\alpha \Psi_j(\underline{\u}) \|<q^{-d} ~\text{ for $1\leq j\leq n$}
\end{array}{}
\right\}
$$
and $\underline{\u}=(\u^{(1)},\dots,\u^{(k-1)})$.
\end{lemma}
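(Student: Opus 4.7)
The plan is to produce an explicit bijection between the two sets being counted and to verify that the defining conditions correspond under it. First, I would identify a vector $\a^{(\ell)} = (a_{i,j}^{(\ell)})_{0\leq i\leq d-1,\ 1\leq j\leq n} \in \FF_q^N$ (with $N=dn$) with the tuple of polynomials $\u^{(\ell)} = (u_1^{(\ell)},\dots,u_n^{(\ell)}) \in \FF_q[T]^n$ defined by
\[
u_j^{(\ell)}(T) := \sum_{i=0}^{d-1} a_{i,j}^{(\ell)} T^i.
\]
This is manifestly a bijection from $(\FF_q^N)^{k-1}$ onto the set of tuples $\underline{\u}$ with $|\u^{(\ell)}|<q^d$ for every $1\leq\ell\leq k-1$, so the size condition in the definition of $N(\alpha)$ is taken care of for free. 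It remains to match the remaining condition involving $\alpha$ with the equations defining $V(F_0)$.

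Next, I would substitute the bijection into $\Psi_j(\underline{\u})$ and expand to obtain a polynomial in $T$ of degree at most $(k-1)(d-1)$, whose coefficient of $T^t$ is a sum over tuples $(j_1,\dots,j_{k-1},i_1,\dots,i_{k-1})$ with $i_1+\cdots+i_{k-1}=t$ of terms $c_{(j_1,\dots,j_{k-1},j)} \prod_\ell a_{i_\ell,j_\ell}^{(\ell)}$, up to the overall factor $k!$. Multiplying by $\alpha=\sum_{r=1}^{kd}b_rT^{-r}$ and reading off the coefficient of $T^{-s}$ for $1\leq s\leq d$ then, after relabelling $i=s-1$ and invoking the identity $d_{(\mathbf{j},j),(\mathbf{i},i)} = c_{(\mathbf{j},j)} b_{i_1+\cdots+i_{k-1}+i+1}$, should reproduce precisely $k!$ times the left-hand side of the $(i,j)$-th equation defining $V(F_0)$. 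The inequalities $t\leq (k-1)(d-1)$ and $s\leq d$ together guarantee $s+t\leq kd$, so every $b_{s+t}$ that appears is within the range $1,\dots,kd$ and no boundary term is lost.

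Finally, I would appeal to the definition of $\|\cdot\|$, which says that $\|\beta\|<q^{-d}$ is equivalent to the simultaneous vanishing of the coefficients of $T^{-1},\dots,T^{-d}$ in $\beta$, together with the assumption $\chaar(\FF_q)>k$, which makes $k!$ a unit in $\FF_q$. Combining these, the $dn$ vanishing conditions coming from $\|\alpha\Psi_j(\underline{\u})\|<q^{-d}$ for $1\leq j\leq n$ match term-for-term, under the bijection, with the $dn$ equations cutting out $V(F_0)$, giving $\mathcal{N}=N(\alpha)$. I do not expect any genuine difficulty here; the only real obstacle is careful index bookkeeping, namely keeping track of how the pair $(i,j)$ labelling an equation of $V(F_0)$ corresponds to the pair $(s,j)$ labelling the coefficient extracted from $\alpha\Psi_j(\underline{\u})$, and verifying that all three symmetries (of $c$, of the product $\prod_\ell a_{i_\ell,j_\ell}^{(\ell)}$, and of $d$) line up correctly.
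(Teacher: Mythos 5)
Your proposal is correct and follows essentially the same route as the paper: identify coefficient vectors with polynomial tuples of degree at most $d-1$, expand $\alpha\Psi_j(\underline{\u})$, and match the vanishing of the coefficients of $T^{-1},\dots,T^{-d}$ (which is exactly $\|\alpha\Psi_j(\underline{\u})\|<q^{-d}$) with the $dn$ equations defining $V(F_0)$ via $d_{\mathbf{j},\mathbf{i}}=c_{\mathbf{j}}b_{i_1+\cdots+i_k+1}$, using $\chaar(\FF_q)>k$ to remove the harmless $k!$ factor. The degree/range bookkeeping you note is precisely what the paper carries out implicitly.
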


\begin{proof}
We write 
$u_{j}^{(i)}=\sum_{i=0}^{d-1} z_{i,j}^{(i)}T^i$ for $1\leq j\leq n$ and 
$1\leq i\leq k-1$.
Then 
$\alpha\Psi_j(\underline{\u})$ is equal to
$$
k!
\sum_{r=1}^{kd}b_r \sum_{j_1,\dots,j_{k-1}=1}^n c_{j_1,\dots,j_{k-1},j} 
 \sum_{i_1,\dots,i_{k-1}=0}^{d-1}
z_{i_1,j_1}^{(1)}\dots
z_{i_{k-1},j_{k-1}}^{(k-1)}T^{i_1+\dots+i_{k-1}-r},
$$
for $1\leq j\leq n$. 
The condition $\|\alpha \Psi_j(\underline{\u}) \|<q^{-d}$ is equivalent to demanding that the coefficient of $T^{-i-1}$ vanishes for 
$0\leq i \leq d-1$. 
Let $N=dn$.  Since $\mathrm{char}(\FF_q)>k$, we therefore see that 
$N(\alpha)$ is equal to the number of 
$(\z^{(1)},\dots,\z^{(k-1)})\in (\FF_q^N)^{k-1}$ for which 
$$
\sum_{j_1,\dots,j_{k-1}=1}^n 
 \sum_{i_1,\dots,i_{k-1}=0}^{d-1}
 c_{j_1,\dots,j_{k-1},j}  b_{i_1+\dots+i_{k-1}+i+1}
z_{i_1, j_{1}}^{(1)}\dots
z_{i_{k-1},j_{k-1}}^{(k-1)}=0,
$$
for $1\leq j\leq n$ and $0\leq i \leq d-1$.
The lemma follows on recalling that 
$d_{\mathbf{j},\bell}=c_{\mathbf{j}} b_{i_1+\dots + i_k+1}$.
\end{proof}

The quantity $N(\alpha)$ should be familiar to experienced practitioners of the circle method and we shall  adapt arguments found in   \cite{BV'} to estimate it. 
Our goal is to establish the following result.

\begin{proposition}\label{t:upper}
Assume that $d\geq k-1$ and 
$\mathrm{char}(\FF_q)>k\geq 3$. 
Let $m$ be an integer in the range \eqref{eq:raison}.
Let 
$(b_1,\dots,b_{kd})\in \AA^N-A_m^{kd}$
and 
put 
 $\alpha=\sum_{r=1}^{kd}b_r T^{-r}$.
Then 
there exists a constant $c_{d,k,n}>0$, independent of $q$, such that 
$
N(\alpha)\leq  c_{d,k,n} q^{dn(k-1)-n\lfloor \frac{m}{k-1}\rfloor}.
$
\end{proposition}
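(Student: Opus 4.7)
The plan is to follow the function-field circle method minor-arc bounds in the style of \cite{BV'}, adapted to our setting. We argue the contrapositive: supposing that $N(\alpha) > c_{d,k,n} q^{dn(k-1) - n\lfloor m/(k-1)\rfloor}$ for a sufficiently large constant, the goal is to deduce that $\alpha \in A_m^{kd}$, contradicting the hypothesis. Throughout, $\theta := \lfloor m/(k-1)\rfloor$ will play the role of the ``scaling parameter''.

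First I would apply the function-field analogue of Davenport's shrinking lemma, which is the geometry-of-numbers input behind Weyl's inequality over $\FF_q(T)$ (see e.g. \cite[Lemma 2.2]{BV} or related results in \cite{lee}). This lets us trade a large count of $\underline{\u}$ with $|\u^{(i)}| < q^d$ and $\|\alpha \Psi_j(\underline{\u})\| < q^{-d}$ for the existence of a single nonzero tuple $\underline{\u} = (\u^{(1)}, \ldots, \u^{(k-1)})$ in a much smaller box with a much stronger approximation condition. Concretely, the supposed lower bound on $N(\alpha)$ would yield a nonzero $\underline{\u}$ with $|\u^{(i)}| \le q^{d - \theta}$ for each $i$ and $\|\alpha \Psi_j(\underline{\u})\| < q^{-kd - 1 + m}$ for each $j \in \{1,\ldots,n\}$.

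Second I would use the smoothness of $Z$ --- equivalently, the non-degeneracy of the symmetric $(k-1)$-linear form polarising $f_0$ --- to control the polynomial $h_2(T) := \gcd(\Psi_1(\underline{\u}), \ldots, \Psi_n(\underline{\u}))$, after rescaling to make it monic. The smoothness of $Z$ implies that the affine variety in $(\AA^n)^{k-1}$ cut out by $\Psi_1 = \cdots = \Psi_n = 0$ is of the expected dimension away from the origin, so the forms $\Psi_j$ are jointly non-degenerate in $\underline{\u}$; combined with the size bound $|\u^{(i)}| \le q^{d-\theta}$, this forces $\deg h_2 \le m$ (after possibly enlarging $c_{d,k,n}$). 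Writing $h_2$ as an $\FF_q[T]$-linear combination of the $\Psi_j(\underline{\u})$'s and using linearity of the approximation condition, the bound on $\|\alpha \Psi_j(\underline{\u})\|$ then yields a polynomial $h_1(T)$ with $\alpha h_2 = h_1 + O(T^{-kd - 1 + m})$, that is,
\[ \alpha = \frac{h_1(T)}{h_2(T)} + O\bigl(T^{-kd - 1 + m - \deg h_2}\bigr). \]
Setting $m' = \deg h_2 \le m$, this is precisely the defining condition for membership in $A_m^{kd}$ provided by Lemma \ref{majorarcproperties}(4), contradicting $\alpha \notin A_m^{kd}$.

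The main technical obstacle will be the second step: using smoothness of $f_0$ to keep $\deg h_2$ below $m$ across the full range $d \le m \le kd/2$. This is the minor-arc counterpart of Birch's classical singular-locus argument, and in the function-field setting an adequate version appears in \cite{BV'} via a van der Corput/Weyl iteration together with a Deligne-type bound on the complete exponential sums attached to the polarised forms of a smooth $f_0$. Adapting that argument to the precise parameter range $d \ge k-1$, $d \le m \le kd/2$ appearing here requires some bookkeeping but no fundamentally new input.
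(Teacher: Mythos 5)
Your overall strategy — shrinking lemma plus smoothness of $Z$ plus the rational approximation characterisation of $A_m^{kd}$ — is the right circle of ideas and close in spirit to the paper's argument, but as written there is a genuine gap at the hinge of the argument.

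The issue is the passage from a large $N(\alpha)$ to a ``single nonzero tuple $\underline{\u}$'' for which you can extract a rational approximation. The shrinking lemma (Lemma~\ref{new-geometry}) bounds $N(\alpha) \le q^{(k-1)ns} N_s(\alpha)$, so a large $N(\alpha)$ certainly forces $N_s(\alpha)$ to be large, but $N_s(\alpha)$ already counts every tuple $\underline{\u}$ lying on the affine variety $\Psi_1(\underline{\u}) = \cdots = \Psi_n(\underline{\u}) = 0$, since the approximation condition is vacuous there. If the tuple produced by shrinking is one of these, your $h_2 = \gcd(\Psi_1(\underline{\u}),\dots,\Psi_n(\underline{\u}))$ is $0$ and no rational approximation of $\alpha$ can be extracted. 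Your write-up invokes smoothness of $Z$ only to control $\deg h_2$ and never rules out this degenerate case. To close the gap you would have to compare $N_s(\alpha)$ against the number of points of the variety $\{\Psi_j = 0\}$ inside the shrunken box $|\u^{(i)}| < q^{d-s}$, and show that a large enough constant $c_{d,k,n}$ forces $N_s(\alpha)$ to exceed this count, guaranteeing a tuple with some $\Psi_j(\underline{\u}) \neq 0$. The paper does exactly this dimension bound ($\dim V \le (k-2)n$, via the diagonal trick and smoothness of $Z$, and then \cite[Lemma 2.8]{BV} for the point count), but uses it in the direct rather than contrapositive direction: it first shows (by the approximation/contradiction argument) that \emph{every} tuple counted by $N_s(\alpha)$ has all $\Psi_j(\underline{\u})=0$, and then bounds $N_s(\alpha)$ by the variety count. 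Your contrapositive version can be made to work, but you must explicitly bring the variety count into step (1), not step (2).

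Two smaller points. First, once you know some $\Psi_j(\underline{\u}) \neq 0$, you do not need the $\gcd$ or an $\FF_q[T]$-linear combination: a single nonzero $r = \Psi_j(\underline{\u})$ already has $|r| \le q^{(\theta-1)(k-1)} \le q^m$ (after shrinking by $s = d - \theta$), and the condition $\|\alpha r\| < q^{-kd + (k-1)\theta} \le q^{-kd+m}$ directly contradicts $\alpha \notin A_m^{kd}$ via Lemma~\ref{majorarcproperties}(4). Second, there is no Deligne-type exponential sum bound anywhere in this step: the only input is the crude dimension/point-count bound for the affine variety $\{\Psi_j=0\}$, and appealing to Weil/Deligne here overstates what is needed.
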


Applying Lemma~\ref{lem:N=N}, we deduce from Proposition \ref{t:upper} 
 that we may take $$
 D\leq dn(k-1)-n \left\lfloor \frac{m}{k-1}\right\rfloor
 $$ 
 in \eqref{eq:LW}.
The statement of Proposition~\ref{arithmeticpart} follows.

\begin{remark} This is the first place where our argument would simplify dramatically if we restricted to the case of a diagonal form like $f(\x) = \sum_{i=1}^n x_i^k$. Indeed, we could then perform Weyl differencing in each variable $x_i$ separately, and so reduce to bounding 
\[ \#\left\{ (u^{(1)},\dots, u^{(k-1)})\in \FF_q[T]^{(k-1)}: 
\begin{array}{l}
|u^{(1)}|,\dots,|u^{(k-1)}|<q^d\\
\|k!\alpha u^{(1)} \dots u^{(k-1)}  \|<q^{-d} 
\end{array}{}
\right\}, \]
which is the analogue of $N(\alpha)$ in the case $n=1$.
To handle the contribution from non-zero $u^{(i)}$
one proceeds by  collecting together the 
terms in which $k! u^{(1)} \dots u^{(k-1)} $ has a particular value $u\in \FF_q[T]$ say, exploiting the fact  that the number of ways in which this can be done is bounded efficiently in terms of the (function field analogue of the) divisor function evaluated at $u$.  Finally, 
the  Diophantine approximation properties of $\alpha$ easily lead to a bound for the number of $u\in \FF_q[T]$ such that  $|u|\leq q^{(d-1)(k-1)}$ and  $\|\alpha u\|<q^{-d}$.
\end{remark}

We now turn to the proof of Proposition~\ref{t:upper}, for which we shall require a technical 
refinement of Davenport's
``shrinking lemma'' in the function field setting. 
A
 lattice in $\FF_q((T^{-1}))^N$ is a set of points of the form ${\bf x} = \Lambda \bf u$ where $\Lambda$ is an $N \times N$ invertible matrix over $\FF_q((T^{-1}))$ and $ {\bf u}$ runs over elements of $\FF_q[T]^n$. Given a lattice $\Lambda$, the adjoint lattice is defined as the lattice associated to the inverse transpose matrix $\Lambda^{-T}$.
The following result is a 
refinement of \cite[Lemma 4.2]{BV'}.

\begin{lemma}\label{new-geometry} 
Let $\gamma$ be a symmetric $n \times n$ matrix with entries in $\FF_q((T^{-1}))$. 
Let $a, c,s\in \mathbb{Z}$ such that $c >0$ and $s \geq 0$.
Let $N_{\gamma,a,c}$ be the number of ${\bf x} \in \FF_q[T]^n$ such that $| {\bf x} | < q^a$ and $\| \gamma {\bf x} \| < q^{-c}$.  Then
\[ 
\frac{N_{\gamma,a,c}}{N_{\gamma,a-s,c+s}} \leq q^{ns + n  \max\{ \lfloor \frac{a-c}{2} \rfloor, 0 \}}. \] \end{lemma}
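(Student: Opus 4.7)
The plan is to reinterpret $N_{\gamma,a,c}$ as a lattice point count in a weighted box, and combine the function-field analogue of Minkowski's second theorem with an injectivity argument that exploits $c>0$.

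First, I would set $L = \{(\mathbf{x}, \gamma\mathbf{x} + \mathbf{y}) : \mathbf{x}, \mathbf{y} \in \FF_q[T]^n\} \subset \FF_q((T^{-1}))^{2n}$ and $B(a,c) = \{|\mathbf{u}|<q^a,\, |\mathbf{v}|<q^{-c}\}$. Then $L$ is a rank-$2n$ lattice and (using $c>0$) one checks that $N_{\gamma,a,c} = |L \cap B(a,c)|$. The weighted norm induced by $B(a,c)$ scales uniformly by $q^s$ under the shift $(a,c)\mapsto(a-s,c+s)$, so the successive minima $\mu_1\leq\cdots\leq\mu_{2n}$ of $L$ shift to $\mu_i+s$. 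Invoking the exact function-field Minkowski counting formula $|L\cap B(a,c)| = \prod_i q^{\max(-\mu_i, 0)}$, a short computation gives
\[
\log_q \frac{N_{\gamma,a,c}}{N_{\gamma,a-s,c+s}} = \sum_{i=1}^{2n} \min\bigl(\max(-\mu_i, 0),\, s\bigr).
\]

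Second, I would bound this sum. The decisive point is that at most $n$ of the $\mu_i$ can be strictly negative: the first-coordinate projection $L\cap B(a,c)\to \FF_q[T]^n$ is injective, since any element $(0,\mathbf{y})\in L$ lying in $B(a,c)$ has $\mathbf{y}\in \FF_q[T]^n$ with $|\mathbf{y}|<q^{-c}<1$ (here we crucially use $c>0$) and hence $\mathbf{y}=0$; and its image lies in a rank-$n$ free $\FF_q[T]$-module. The number of negative $\mu_i$ equals the maximal number of $\FF_q[T]$-independent elements of $L\cap B(a,c)$, which is thus at most $n$. Each negative term contributes at most $s$ to the sum, yielding the bound $ns$, and hence $N_{\gamma,a,c}/N_{\gamma,a-s,c+s}\leq q^{ns}$, which comfortably implies the lemma.

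The main technical obstacle is the careful setup of function-field Minkowski theory: for a full-rank lattice in $\FF_q((T^{-1}))^N$ equipped with a product of one-dimensional non-archimedean norms, one needs that an orthogonal basis achieving the successive minima exists, and that the point count in the unit box is given by the clean product formula above. This is standard and can be proved via iterated Gram--Schmidt reduction exploiting the ultrametric inequality. If one wishes to avoid this machinery and obtain only the stated (weaker) bound, it suffices to combine the trivial shrinking bound $q^{2ns}$ with the Poisson duality $N_{\gamma,a,c}=q^{(a-c)n}N_{\gamma,c,a}$ (which follows from the symmetry of $\gamma$) through the path decomposition $(a,c)\to(a-u,c+u)\to(a-s,c+s)$, where $u=\max(\lfloor(a-c)/2\rfloor,0)$, applying the trivial bound on the first leg and the balanced-case estimate on the second.
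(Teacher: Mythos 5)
Your main argument is not correct: the headline bound $N_{\gamma,a,c}/N_{\gamma,a-s,c+s}\leq q^{ns}$ is false, not merely unproved. Take $n=1$, $\gamma=(T^{-3})$, $a=4$, $c=1$, $s=1$. A polynomial $x=x_0+x_1T+x_2T^2+x_3T^3$ satisfies $\|T^{-3}x\|<q^{-1}$ iff $x_2=0$, so $N_{\gamma,4,1}=q^3$; while for $x$ of degree $\leq 2$ the condition $\|T^{-3}x\|<q^{-2}$ forces $x_1=x_2=0$, so $N_{\gamma,3,2}=q$. The ratio is $q^2$, which matches the lemma's bound $q^{ns+n\lfloor(a-c)/2\rfloor}=q^2$ exactly but exceeds your $q^{ns}=q$. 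Correspondingly, both successive minima of the associated rank-$2$ lattice are negative here, contradicting your claim that at most $n$ of them can be. The precise error is the inference from ``the projection $L\cap B(a,c)\to\FF_q[T]^n$ is injective and lands in a rank-$n$ module'' to ``$L\cap B(a,c)$ contains at most $n$ independent vectors'': injectivity of a set map does not bound the dimension of the span of its domain. In the example above, $(1,T^{-3})$ and $(T^{3},0)$ both lie in $L\cap B(4,1)$ and are linearly independent over $\FF_q((T^{-1}))$, yet their projections $1$ and $T^{3}$ are $\FF_q[T]$-dependent. A telling symptom is that your main argument never uses the symmetry of $\gamma$, which is the essential hypothesis: the paper's proof uses it to show that $t^{c-a}\Lambda_{a,c}^{-T}$ is $\mathrm{GL}_{2n}(\FF_q)$-equivalent to $\Lambda_{a,c}$, whence the duality $\hat R_i\hat R_{2n+1-i}=q^{c-a}$ of successive minima; this only gives $\hat R_{n+1}\geq q^{\lceil(c-a)/2\rceil}$, i.e.\ the last $n$ minima need not be nonnegative but are at least $q^{-\lfloor(a-c)/2\rfloor}$, and that is exactly where the extra factor $q^{n\max\{\lfloor(a-c)/2\rfloor,0\}}$ comes from.

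Your fallback sketch is much closer to the truth --- the combination of the covering bound $q^{2ns}$ on the leg down to the balanced point with a $q^{n\cdot(\text{shrink})}$ bound on the balanced leg does yield the stated inequality --- but as written it hides the two real difficulties inside the unproved assertions ``Poisson duality'' and ``balanced-case estimate.'' Both of these are consequences of the minima duality above (in the balanced case $\mu_n+\mu_{n+1}=c-a\geq -1$ forces $\mu_{n+1}\geq 0$, so at most $n$ minima are negative there), and proving that duality from the symmetry of $\gamma$ is the actual content of the lemma. To repair the write-up, replace the injectivity argument by this duality computation, or follow the paper's route: apply Lee's product formula $N_{\gamma,a,c}=\prod_{i=1}^{2n}\max\{1,\hat R_i^{-1}\}$, bound the first $n$ factors of the ratio by $q^{s}$ and the last $n$ by $q^{\max\{\lfloor(a-c)/2\rfloor,0\}}$ using $\hat R_{n+1}\geq q^{\lceil(c-a)/2\rceil}$.
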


\begin{proof}
The bound is trivial when $a\leq 0$ since then the left hand side is $1$. Hence we may assume that $a>0$ in what follows.  It will be convenient to adopt the notation $\hat R=q^R$ for any $R\in \RR$.
Let  
\[ \Lambda_{a,c} = \begin{pmatrix} t^{-a} I_n & 0 \\ t^c \gamma & t^c I_n \end{pmatrix} ,
\] 
so that 
\[ 
\Lambda_{a,c}^{-T} = \begin{pmatrix} t^{a} I_n & 
-t^{a} 
\gamma \\ 0 & t^{-c} I_n \end{pmatrix}. \] 
We note that  
\[ t^{c-a} 
\Lambda_{a,c}^{-T} 
=  \begin{pmatrix} 
t^{c} I_n 
 & 
 - t^{c} \gamma 
  \\ 0 & t^{-a} I_n \end{pmatrix} = 
\begin{pmatrix} 0 & I_n \\ - I_n & 0 \end{pmatrix} \Lambda_{a,c}  \begin{pmatrix} 0 & I_n \\ - I_n & 0 \end{pmatrix} ^{-1} .\] 
Let $\hat{R}_1\leq  \dots \leq \hat{R}_{2n}$ denote the successive minima of the lattice corresponding to $\Lambda_{a,c}$.  Then \[ 
q^{c-a} / \hat{R}_{2n}\leq  \dots\leq  q^{c-a} / \hat{R}_1\] are the successive minima of the lattice corresponding to $t^{c-a} 
\Lambda_{a,c}^{-T} 
 $. 
Since the lattices are equal up to left and right multiplication by a matrix in $\mathrm{GL}_{2n}(\FF_q)$, 
we must have  
$\hat R_i=q^{c-a}/ \hat{R}_{2n+1-i}$  
for all $1\leq i\leq 2n$. Taking $i=n+1$ we deduce that 
$$
q^{ \lceil \frac{c-a}{2} \rceil} \leq \hat{R}_{n+1}.
$$ 

Now $N_{\gamma,a,c}$ is simply the number of vectors in the lattice 
$\Lambda_{a,c}$
of norm $<1$, while $N_{\gamma,a-s,c-s}$ is the number of norm $<q^{-s}$.  Hence, 
as established in Lee \cite[Lemma 3.3.5]{lee}, we have 
$$
N_{\gamma,a,c} = \prod_{i=1}^{2n} \max\{1, \hat{R}_{i}^{-1}\}\quad 
\text{ and 
}\quad N_{\gamma,a-s,c+s}= \prod_{i=1}^{2n}  \max\{1, q^{-s} \hat{R}_i^{-1}\}.
$$ 
Dividing term by term, we see that each $i$ contributes at most $q^{s}$ and each $i\geq n+1$ contributes at most $q^{ \max\{ \lfloor \frac{a-c}{2} \rfloor, 0\}}$. Thus the total contribution is at most $q^{ n    s + n  \max\{ \lfloor \frac{a-c}{2} \rfloor, 0\} }$, as desired. 
\end{proof}

Recalling the definition of $N(\alpha)$ from Lemma \ref{lem:N=N} it follows from 
Lemma \ref{new-geometry} that 
$$
N(\alpha)\leq q^{(k-1)ns} N_s(\alpha)
$$
for any integer $s\geq 0$, where
$$
N_s(\alpha)=
\#\left\{ \underline{\u}\in \FF_q[T]^{(k-1)n}: 
\begin{array}{l}
|\u^{(1)}|,\dots,|\u^{(k-1)}|<q^{d-s}\\
\|\alpha \Psi_j(\underline{\u}) \|<q^{-d-(k-1)s} ~\text{ for $1\leq j\leq n$}
\end{array}{}
\right\}.
$$
Suppose that we are given a vector
$(b_1,\dots,b_{kd}) 
\in (\mathbb A^{kd}-A_m^{kd})(\mathbb F_{q})$, for an integer $m$ in the range 
\eqref{eq:raison}.
Let $\alpha=\sum_{r=1}^{kd}b_r T^{-r}$ be the corresponding point in $\TT$.
Suppose that $\underline\u$ is counted by $N_s(\alpha)$, but is such that
$\Psi_j(\underline{\u})\neq 0$ for some $1\leq j\leq n$.
Putting $r=\Psi_j(\underline{\u})$, it follows that $|r|\leq q^{(d-s-1)(k-1)}$. We can ensure that 
$|r|\leq q^m$ by demanding that 
$$
s(k-1)\geq (d-1)(k-1)-m
$$
Next, let $a$ be the integer part of $\alpha \Psi_j(\underline{\u}) $. Then we have 
$$
|r\alpha-a|=\|\alpha \Psi_j(\underline{\u})\|<q^{-d-(k-1)s}.
$$
On the other hand, 
since $(b_1,\dots,b_{kd}) \not\in A_m^{kd}$,  Lemma~\ref{majorarcproperties}(4) implies that 
$
|r\alpha-a|\geq q^{-dk+m},
$
since $|r|\leq q^m$.
Thus we arrive at a contradiction if 
$$
s(k-1)\geq d(k-1)-m.
$$
Recalling that $s$ is also required to be a non-negative integer,  we are clearly led to make the choice 
$$
s=\max\left\{0, d-\left\lfloor \frac{m}{k-1}\right\rfloor \right\}=
d-\left\lfloor \frac{m}{k-1}\right\rfloor, 
$$
since \eqref{eq:raison} implies that 
$$
\left\lfloor \frac{m}{k-1}\right\rfloor\leq  \frac{m}{k-1} \leq \frac{kd}{2(k-1)}\leq d.
$$

With our choice of $s$ it now follows that 
$$
N_s(\alpha)=
\#\left\{ \underline{\u}\in \FF_q[T]^{(k-1)n}: 
\begin{array}{l}
|\u^{(1)}|,\dots,|\u^{(k-1)}|<q^{d-s}\\
 \Psi_j(\underline{\u})=0 ~\text{ for $1\leq j\leq n$}
\end{array}{}
\right\}.
$$
Since  $Z$ is smooth,
the 
system of equations $\Psi_j=0$ defines an affine variety $V$ of dimension 
at most $(k-2)n$. 
To see this, we note that the intersection of $V$ with the diagonal $\Delta=\{\underline\u\in \AA^{(k-1)n}: 
\u^{(1)}=\dots=\u^{(k-1)}\}$ is contained in the singular locus of $f_0=0$ and so has affine dimension $0$. The claim follows on noting that 
$$0=\dim(V\cap \Delta)\geq \dim V+\dim \Delta -(k-1)n=\dim V-(k-2)n.
$$
Thus  \cite[Lemma~2.8]{BV} implies that the 
there are 
$\ll q^{ (k-2)n(d-s)}$ choices for $\underline\u$, with an implied constant that depends only on $k$ and $n$. We have therefore shown that 
$$
N(\alpha)\ll 
q^{(k-1)ns} \cdot  q^{ (k-2)n(d-s)}
= q^{dn(k-1)-n(d-s)}
= q^{dn(k-1)-n\lfloor \frac{m}{k-1}\rfloor}.
$$
The statement of  Proposition~\ref{t:upper} is now clear.

\section{Topological interpretation}

As described in \S~\ref{s:intro}, 
Theorem~\ref{main2} describes a kind of homological stabilisation 
phenomenon.  In this section we draw comparisons 
with  work of Segal \cite{segal} on the moduli space of degree $d$ maps $\mathbb P^1 \to \mathbb P^n$ over $\mathbb C$ that send the point $\infty$ of $\mathbb P^1$ to a fixed point of $\mathbb P^n$, where as in our setting
 there is no natural morphism from the space of degree $d$ maps to the space of degree $d+1$ maps. Because $\mathbb P^1(\mathbb C)$ is simply the sphere $S^2$, the space of degree $d$ maps naturally embeds into the space of based continuous maps $S^2 \to \mathbb P^n(\mathbb C)$, which is the based double loop space $\Omega^2 \mathbb P^n(\mathbb C)$. Segal showed that this embedding is a homotopy equivalence up to dimension $d(2n-1)$, and in particular is an isomorphism on the first $d(2n-1)$ homology groups \cite[Prop.~1.2]{segal}.  A lot of subsequent work has been directed at proving similar results for spaces of maps from $\mathbb P^1$ to other algebraic varieties. 

The situation in our case is somewhat different, because we are looking at maps between non-compact varieties and our base points do not lie in the varieties but rather on the boundary. However, we still obtain a natural map to a double loop space, and we conjecture that a similar stabilisation result holds.

Let  $X$ be a smooth affine hypersurface over $\mathbb C$ with smooth projective closure $\overline{X}$. Let $d\in \NN$ be a natural number and let $P=(x_1: \ldots : x_n : 0)\in \overline{X}-X$.  We let $Hom_{d,P}(\mathbb C,X)$ be the space parameterizing continuous (but not necessarily holomorphic) maps $\mathbb P^1(\mathbb C) \to \overline{X}$ such that the point $\infty$ is sent to $P$, with every other point sent to $X$, and such that the map, expressed in terms of a local coordinate $ z$ near $\infty$, has the form 
\begin{equation}\label{eq:local}
(x_1 + O(|z|): \ldots : x_n + O(|z|) : z ^d + O( |z|^{d+1})).
\end{equation}
Then there is a map $\Mor_{d,P}(\mathbb A^1,X) \to Hom_{d,P} (\mathbb C, X)$, where we can check the condition on the local coordinate at $\infty$ by using $z =T^{-1}$.  We have been led to formulate the following conjecture.

\begin{conjecture}\label{stabilisation-conjecture} 
Assume that $d \geq k-1 \geq 2$.  If \[ 0 \leq j <  4 \left( \left \lfloor \frac{d}{k-1} \right \rfloor \left( \frac{n}{2^k} - k + 1\right) - 1\right) \] then the pairing  \[H^{2d(n-k)- j}_c ( \Mor_{d,P}(\mathbb A^1,X) , \mathbb Q)   \otimes H^{j} ( Hom_{d,P} (\mathbb C, X), \mathbb Q) \to \mathbb Q,\] induced by functoriality along $\Mor_{d,P}(\mathbb A^1,X) \to Hom_{d,P} (\mathbb C, X)$ and the trace map, is a perfect pairing. \end{conjecture}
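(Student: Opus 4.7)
My plan is to reformulate the claimed perfect pairing, via Poincar\'e duality on the smooth variety $\Mor_{d,P}(\mathbb A^1,X)$ of dimension $d(n-k)$ (Corollary~\ref{cor:dim2}), as the statement that the pullback map $f^*: H^j(Hom_{d,P}(\mathbb C,X),\mathbb Q)\to H^j(\Mor_{d,P}(\mathbb A^1,X),\mathbb Q)$ induced by the natural map $f:\Mor_{d,P}(\mathbb A^1,X)\to Hom_{d,P}(\mathbb C,X)$ is an isomorphism in the stated range. Once the problem is recast this way, the strategy is to compute both sides in terms of a common configuration-space model and to verify that $f^*$ realizes the resulting identification.

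For the topological side, the plan is to construct a scanning/Segal-type homotopy equivalence, in a range of degrees proportional to $d$, between $Hom_{d,P}(\mathbb C,X)$ and the $d$-th component of a labeled configuration space $C_d(\mathbb C;\widetilde X)$, where the labels $\widetilde X$ record germs of pointed continuous maps from a small disk into $\bar X$ sending the boundary into $X$. Such a model arises naturally from the local condition \eqref{eq:local} at infinity, and after rationalization gives, in the appropriate range,
\[ H^j(Hom_{d,P}(\mathbb C,X),\mathbb Q) \;\cong\; \bigoplus_{m=0}^{d} \bigl(H^{j-m(n-1)}(\Pconf_m,\mathbb Q)\otimes H^{n-1}(X,\mathbb Q)^{\otimes m}\otimes \sgn^{n-1}\bigr)^{S_m}, \]
the sign twist $\sgn^{n-1}$ arising for cohomological-parity reasons, since $X$ is a complex variety of complex dimension $n-1$. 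Combining this with Theorem~\ref{main2}, the (conjectural) degeneration of Proposition~\ref{majormain} predicted in Conjecture~\ref{con:degenerate}, and Poincar\'e duality on $X$ identifying $H^{n-1}_c(X)\cong H^{n-1}(X)^\vee$, one obtains an abstract identification of the right-hand side with $H^j(\Mor_{d,P}(\mathbb A^1,X),\mathbb Q)$.

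The core task is then to show that $f^*$ realizes this abstract identification, which reduces to a comparison of filtrations. On the algebraic side the filtration is by the major arcs $A_m^{kd}$ of Definition~\ref{def:major}, with associated graded pieces controlled by Lemma~\ref{rationalisation}, which identifies $U_m^{kd}$ with pairs of coprime polynomials $(h_1,h_2)$; on the topological side one filters by configurations of at most $m$ labeled points. The comparison sends a tuple $(g_1,\dots,g_n)\in \Mor_{d,P}(\mathbb A^1,X)$ to the configuration of roots of the best denominator approximation $h_2$, decorated by the residues of $(g_1,\dots,g_n)$ modulo $h_2$ viewed as points of $X$, in direct analogy with the scanning map. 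The hardest step will be identifying the local coefficient data at a single root: on the algebraic side this is the computation of $\overline{S}_{1,f}$ in Lemma~\ref{mainterm}, which contributes $H^{*,\red}_c(X_{\bar K},\Ql)$, while on the topological side it is the rational homotopy type of the local pointed mapping space into $\bar X$ near a generic fiber. Proving that $f^*$ realizes Poincar\'e duality between these two local contributions is the crux of the argument and would, as a byproduct, force degeneration of the spectral sequence of Theorem~\ref{main2} and thereby establish Conjecture~\ref{con:degenerate} simultaneously; indeed, the interlocked nature of these two conjectures suggests that any proof must address them as a package rather than separately.
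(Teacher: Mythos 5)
This statement is a \emph{conjecture} in the paper, not a theorem; the authors do not prove it. What they do prove (Theorem~\ref{t:top}) is the implication from Conjecture~\ref{stabilisation-conjecture} to Conjecture~\ref{con:degenerate}, via the dimension counts in Corollary~\ref{homotopy-algebraic} and Lemma~\ref{homotopy-topological}. So there is no paper proof to compare against, and your proposal should be judged as a research program rather than as a write-up of a known argument. Evaluated on those terms, it identifies the right structural ingredients (a Segal-type scanning equivalence, a configuration-space model with labels capturing the local mapping-space data, a filtration comparison) and it honestly acknowledges where the real difficulty lies. But there are two substantive gaps that would have to be closed before this becomes more than a plan.

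First, the opening move — recasting the perfect pairing as the statement that $f^*$ is an isomorphism — already relies on Poincar\'e duality for $\Mor_{d,P}(\mathbb A^1,X)$, which requires smoothness. Corollary~\ref{cor:dim2} gives irreducibility and the expected dimension, but the paper never asserts smoothness, and the entire framework deliberately works with compactly supported cohomology precisely to avoid such an assumption. The pairing in the conjecture as stated is cup product $H^i_c\otimes H^j\to H^{i+j}_c$ followed by the trace, which is well defined on any variety; the perfectness of that pairing is a genuinely stronger assertion than ``$f^*$ is an isomorphism in a range'' unless smoothness of $\Mor_{d,P}$ is established (or unless you instead prove that $H^*_c$ and $H^*$ of $\Mor_{d,P}$ satisfy duality for some other reason, which would itself be a nontrivial theorem). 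You should either supply this input or reformulate the strategy to target the pairing directly.

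Second, and more seriously, the proposed filtration comparison misplaces where the major-arc filtration lives. The filtration by $A_m^{kd}$ of Definition~\ref{def:major} lives on the Fourier-dual affine space $\mathbb A^{kd}$, and the spectral sequence of Theorem~\ref{main2} is built from the filtration of $H^*_c(A_d^{kd},S_{d,f})$; it is only via the $\ell$-adic Fourier inversion of Lemma~\ref{Fourier} that this computes $H^*_c(\Mor_{d,P})$. There is no induced stratification of $\Mor_{d,P}$ itself by ``denominator degree,'' so the map you describe — sending $(g_1,\dots,g_n)\in\Mor_{d,P}$ to ``the configuration of roots of the best denominator approximation $h_2$'' — is not well defined: $h_2$ is attached to a point $(b_1,\dots,b_{kd})$ of the dual space, not to a polynomial tuple. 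To compare with the topological filtration by number of configuration points, you would need either a dual filtration on $H^*_c(\Mor_{d,P})$ compatible with the scanning map, or to carry out the comparison entirely on the level of the arc side and show the scanning map intertwines with the Fourier transform. That is precisely where the ``crux'' you flag lives, and as written the proposal does not yet contain an idea for how to bridge it. The observation that any proof of this conjecture likely forces Conjecture~\ref{con:degenerate} is correct and consistent with the paper, but it also means your plan cannot afford to presuppose degeneration anywhere along the way.
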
 

We proceed  by making the following observation. 

\begin{lemma} \label{lem:homotopic}
As long as it is nonempty, $Hom_{d,P}(\mathbb C,X)$ is 
homotopic to the based double loop space $\Omega^2 X$ of $X$. 
\end{lemma}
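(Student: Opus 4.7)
My plan is to construct a zigzag of homotopy equivalences between $Hom_{d,P}(\mathbb C,X)$ and $\Omega^2 X$ by splitting $\mathbb P^1(\mathbb C) = S^2$ into a small closed disk $D_\infty$ around $\infty$ and the closed complement $D_0 \cong D^2$. Let $r : Hom_{d,P}(\mathbb C,X) \to \mathcal J$ denote the restriction map, where $\mathcal J$ parametrises those continuous maps $D_\infty \to \overline{X}$ satisfying the local form \eqref{eq:local} at $\infty$. The aim is to show that $r$ is a quasi-fibration with contractible base, and then identify any fiber of $r$ with $\Omega^2 X$; the lemma follows.

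To see $\mathcal J$ is contractible, observe that the local form \eqref{eq:local} fixes the germ of the map at $\infty$ up to continuous remainders that are $O(|z|)$ in the first $n$ homogeneous coordinates and $O(|z|^{d+1})$ in the last. The space of allowed tails is a convex subset of a topological vector space of continuous functions on $D_\infty$, and hence is contractible. Next, fix any model $f_0 \in \mathcal J$; this exists by the standing hypothesis that $Hom_{d,P}(\mathbb C,X)$ is nonempty. The fiber $r^{-1}(f_0)$ is the space of continuous extensions of $f_0|_{\partial D_\infty}$ to $D_0$ whose image lies in $X$, i.e.\ the space of continuous maps $(D_0, \partial D_0) \to (X, \gamma)$, where $\gamma = f_0|_{\partial D_\infty}$ is a small loop in $X$ encircling $P$ in $\overline{X}$. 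Since $P$ is a smooth point of the complex manifold $\overline{X}$, a punctured neighbourhood of $P$ in $\overline{X}$ deformation retracts onto its link $S^{2n-3}$, which is simply connected for $n \geq 3$; thus $\gamma$ is null-homotopic in $X$. A standard cap-off argument using such a null-homotopy identifies $r^{-1}(f_0)$ up to weak equivalence with the space of based maps $(D_0/\partial D_0, *) \to (X, x_0) = \Omega^2(X, x_0) = \Omega^2 X$.

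The main obstacle is establishing that $r$ is a quasi-fibration: the defining constraint that $f(z) \in X$ for every $z \neq \infty$ is neither open nor closed in the compact-open topology on $\mathrm{Map}(S^2, \overline{X})$, so $r$ is not obviously a locally trivial bundle or Serre fibration. To handle this I would exploit that $\overline{X} - X$ has complex codimension $1$, hence real codimension $2$, in $\overline{X}$: by a transversality/general-position argument, families of maps $S^2 \to \overline{X}$ can be perturbed so as to meet $\overline{X} - X$ only at $\infty$, so that local sections of $r$ constructed from $\mathcal J$ can be lifted into $Hom_{d,P}(\mathbb C,X)$ without altering the underlying homotopy type. The argument can then be completed either via the Dold--Thom recognition criterion for quasi-fibrations, or alternatively by embedding $Hom_{d,P}(\mathbb C,X)$ into a larger, manifestly fibering space of maps (allowing controlled extra intersections with $\overline{X} - X$) and showing the inclusion is a weak equivalence.
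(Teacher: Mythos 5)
Your architecture is genuinely different from the paper's. The paper constructs a direct map $\Omega^2 X \to Hom_{d,P}(\mathbb C,X)$ by gluing onto a fixed basepoint map and then shows it is a homotopy equivalence by \emph{canonically retracting} any element of $Hom_{d,P}(\mathbb C,X)$ onto the fixed map near $\infty$: near $P$ one coordinate is a unit and another is solved for holomorphically via smoothness of $\overline{X}\cap\{\text{hyperplane at }\infty\}$, after which straight-line homotopies in the remaining coordinates work, with the fixed leading coefficient of the last coordinate guaranteeing that no new intersections with $\overline{X}-X$ are introduced. You instead propose a fibration-sequence argument with the restriction map $r$ over a contractible base. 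The paper's route entirely sidesteps the need to prove any fibration property, which is precisely the step your argument leaves open.

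The gap is the quasi-fibration claim, and you correctly flag it as the main obstacle but do not close it. The constraint $f(S^2 - \{\infty\}) \subseteq X$ makes $Hom_{d,P}(\mathbb C,X)$ a non-open, non-closed subspace of $\mathrm{Map}(S^2,\overline{X})$, so the standard cofibration/restriction fibration theorem does not apply. Your first remedy --- perturb families to avoid $\overline{X}-X$ away from $\infty$ by general position --- is not a transversality statement that is available for arbitrary continuous (not smooth) families parametrized by arbitrary CW pairs, and even granting a smoothing step it is not clear the perturbation can be made to preserve the exact boundary condition needed to lift local sections while leaving the homotopy type of the base and fiber unchanged. Your second remedy (``embed in a larger, manifestly fibering space and show the inclusion is a weak equivalence'') is not specified; identifying that larger space and proving the weak equivalence would itself be essentially the whole content of the lemma. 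Two smaller issues: your contractibility of $\mathcal J$ ``as a convex subset of a topological vector space'' is not literally correct, since $\mathcal J$ consists of maps into $\overline{X}$; to make convex combinations legitimate you must first pass to graph coordinates for $\overline{X}$ near $P$, which is exactly the local argument the paper spells out. And the claim that $\gamma$ is null-homotopic in $X$ because the link $S^{2n-3}$ of $P$ in $\overline{X}$ is simply connected is not quite right, since that null-homotopy lives in $\overline{X}-\{P\}$ and may pass through the divisor $\overline{X}-X$; the null-homotopy in $X$ is available for the more elementary reason that $f_0|_{D_0}$ already provides it.
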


\begin{proof} Fix a point of $Hom_{d,P}(\mathbb C,X)$. We may as well choose the base point to lie in the image of this map. Having done this, we can define a map $\Omega^2 X \to Hom_{d,P}(\mathbb C,X)$ by gluing the fixed map $\mathbb P^1(\mathbb C) \to \overline{X}$ to an arbitrary map $S^2 \to X$ at that based point, by  using the fact that $\mathbb P^1(\mathbb C)$ is a 2-sphere and fixing a suitable map from a 2-sphere to the wedge sum of two 2-spheres. 

To obtain a homotopy inverse, we check that we can canonically deform any map $\mathbb CP^1 \to X$ which has the form 
\eqref{eq:local} near $\infty$ to our 
fixed map in a neighbourhood of $\infty$.  The  fact that we have fixed the leading coefficients makes this possible. Near this point, one of the coordinates is locally a unit, and we can divide all the coordinates by it. Because the intersection of $\overline{X}$ with $\infty$ is smooth, one of the coordinates can be written as a holomorphic function of the other coordinates, and we can drop it. Having done this, we can use the convex combinations to canonically deform any map to our fixed map.  Because the leading coefficient of the last coordinate is fixed, this convex combination will not introduce any new zeroes in a neighbourhood. 
We can then deform the map to agree with our fixed map in larger neighbourhoods of $\infty$ until it agrees on a whole half-sphere and hence can be expressed as a gluing. 
\end{proof}

We shall show that 
for $K=\CC$, Conjecture~\ref{stabilisation-conjecture} implies our earlier Conjecture~\ref{con:degenerate} on the degeneration of the spectral sequence in Theorem~\ref{main2} on the first page.
Our plan for doing this is to  calculate the dimensions of the rational cohomology groups of $Hom_{d,P} (\mathbb C, X)$, which by Conjecture~\ref{stabilisation-conjecture}, allows  us to calculate the dimensions of the rational cohomology groups of $\Mor_{d,P}(\mathbb A^1,X)$. Next, we calculate the dimensions of the cohomology groups on the first page of our spectral sequence and compare them. We show that, if any non-zero differentials existed, the dimension of  $H^{2d(n-k)- j}_c ( \Mor_{d,P}(\mathbb A^1,X) , \mathbb Q)$ would be less than its predicted value under Conjecture~\ref{stabilisation-conjecture}. Thus the 
conjecture implies that  the differentials vanish and the sequence degenerates.

This builds on (unpublished)
work of Ellenberg and Venkatesh, who used a loop space model to predict the supertrace of Frobenius on the cohomology group of a similar 
mapping space, and saw that it agreed with the main term from the circle method. Our situation differs in that we do not consider the Frobenius action on the cohomology groups but do need to understand the dimension of individual cohomology groups and not just the Euler characteristic.

\begin{definition}\label{def:e}
Given $N\in \NN$, let 
$e_k(N)$  be the unique sequence of integers such that 
$$\prod_{k=1}^\infty (1 -T^k) ^{-e_k(N)} = 1 +  (-1)^{n-1} N  T, $$ 
for a formal variable $T$.
\end{definition}

We claim that 
\begin{equation}\label{eq:inversion-formula} 
e_k(N) = - \frac{1}{k} \sum_{d \mid k } \mu\left(d\right) ((-1)^n N)^{k/d}.
\end{equation}
To check this we take  logarithms of both sides of the identity 
in Definition~\ref{def:e}. This yields
\[ -\sum_{k=1}^\infty e_k(N) \sum_{d=1}^\infty \frac{ T^{kd}}{d}   = \sum_{m=1}^\infty  ( (-1)^{n} N)^m \frac{T^m}{m}.  \]
On  extracting the coefficient of $T^m$, we obtain
\[ \frac{((-1)^{n} N)^m }{m} = -\sum_{d \mid m}  \frac{  e_{m/d}(N)}{d} 
=-\frac{1}{m} \sum_{d \mid m} (m/d) e_{m/d}(N).
\]
The M\"{o}bius inversion formula now yields
\[ - k e_k(N) =  \sum_{d \mid k } \mu\left(k/d\right) ((-1)^n N)^d
,\]
from which the claimed equality \eqref{eq:inversion-formula} follows.

The numbers $e_k(N)$ will feature prominently in our calculations of various dimensions.  We begin with the following result. 

\begin{lemma}\label{homotopy-pconf} Let $m,N\in \NN$, let $i\in \ZZ$  and let $V$ be a vector space of dimension $N$. 
Then $\dim (H^{m+i}_c( \Pconf_{m},\Ql) \otimes  V^{\otimes m} \otimes \sgn^{n-1} )^{S_m}$ is $(-1)^{mn+i}$ times the coefficient of $q^i U^m$ in $\prod_{k=1}^\infty (1 - q U^k )^{-e_k(N)}$.  \end{lemma}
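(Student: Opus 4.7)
The plan is to recast the invariant dimension via character theory, evaluate the resulting generating function using Grothendieck--Lefschetz on $\Pconf_m$ over $\overline{\FF_q}$, and then verify the remaining product identity by a logarithmic symmetry argument. First I would use $\dim W^{S_m} = \frac{1}{m!}\sum_{\sigma \in S_m}\chi_W(\sigma)$ together with $\chi_{V^{\otimes m}}(\sigma)=N^{c(\sigma)}$, where $c(\sigma)$ is the number of cycles of $\sigma$. Combined with $\sgn(\sigma)=(-1)^{m-c(\sigma)}$, the signs $(-1)^{mn+i}$ and $\sgn(\sigma)^{n-1}$ package cleanly: setting $M:=(-1)^{n+1}N$, the combined factor $\sgn(\sigma)^{n-1}N^{c(\sigma)}(-1)^{m(n+1)}$ simplifies to $M^{c(\sigma)}$ independently of the parity of $n$, so the claim becomes an identity of generating functions featuring only $M^{c(\sigma)}$ and the Lefschetz-type sum $\sum_i(-q)^i\chi_{H^{m+i}_c(\Pconf_m)}(\sigma)$.

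Since $\Pconf_m$ is defined over $\ZZ$ and its $\ell$-adic cohomology with the $S_m$-action is invariant under base change, I would work over $\overline{\FF_q}$ for this step. Arnold's presentation of $H^*(\Pconf_m,\Ql)$ shows that this cohomology is pure of Tate type, with $\operatorname{Frob}_q$ acting on $H^i$ as multiplication by $q^i$ and hence on $H^i_c(\Pconf_m,\Ql)$ as $q^{i-m}$ by Poincar\'e duality. The Grothendieck--Lefschetz trace formula applied to $\sigma\cdot\operatorname{Frob}_q$ then yields
\[
\sum_i(-q)^i\chi_{H^{m+i}_c(\Pconf_m)}(\sigma) \;=\; (-1)^m\,\#\{x \in \Pconf_m(\overline{\FF_q}) : x_{\sigma(i)}=x_i^q \text{ for all } i\}.
\]
A fixed configuration assigns each cycle of $\sigma$ of length $j$ to a Frobenius orbit of size exactly $j$ in $\overline{\FF_q}$ (an element of $\FF_{q^j}$ lying in no proper subfield), with $j$ choices of starting primitive element within each orbit, and distinct cycles must take distinct orbits. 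Writing $c_j(q)=\frac{1}{j}\sum_{d|j}\mu(j/d)q^d$ for the number of Frobenius orbits of size $j$, a permutation of cycle type $\lambda=(\lambda_1,\lambda_2,\dots)$ therefore admits $\prod_j j^{\lambda_j}\,c_j(q)!/(c_j(q)-\lambda_j)!$ fixed configurations.

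Summing over $\sigma$ with the classical count $m!/\prod_j j^{\lambda_j}\lambda_j!$ of permutations of cycle type $\lambda$, and regrouping as an exponential generating function, collapses the left-hand side to $\prod_{j\geq 1}(1-XU^j)^{c_j(q)}$, where $X := -M = (-1)^n N$. Observing that $c_j(q)=W_j(q)$ with $W_k(r):=\frac{1}{k}\sum_{d|k}\mu(d)r^{k/d}$, and that $-e_k(N)=W_k(X)$ by \eqref{eq:inversion-formula}, the lemma reduces to the symmetric-product identity
\[
\prod_{j\geq 1}(1-XU^j)^{W_j(q)} \;=\; \prod_{k\geq 1}(1-qU^k)^{W_k(X)}.
\]
I would verify this by taking logarithms: after expanding $\log(1-\cdot)$ as a power series and substituting the formula for $W_k$, both sides reduce to $-\sum_{m\geq 1}\frac{U^m}{m}\sum_{(d,e,r):\,der=m}\mu(d)\,q^e X^r$, a triple sum manifestly symmetric in $(q,X)$ via the relabeling $(d,e,r)\leftrightarrow(d,r,e)$ of ordered triples with product $m$.

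The main obstacle will be this last step, the symmetric-product identity: it is the product-form shadow of Witt's formula applied on both sides, and its truth depends on the clean $(q,X)$-symmetry of the triple sum $\sum_{der=m}\mu(d)q^e X^r$. The earlier steps are fairly routine once one has the Tate purity and Frobenius-weight calculation on $H^*_c(\Pconf_m)$ in hand and tracks the signs introduced by $\sgn(\sigma)^{n-1}$ and $(-1)^{mn+i}$ with enough care to package them into the single factor $M^{c(\sigma)}$.
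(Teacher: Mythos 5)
Your argument is correct — I verified that the signs assemble into $M^{c(\sigma)}$ with $M=(-1)^{n-1}N$, that your fixed-point count for $\sigma\cdot\Frob_q$ by cycle type is right, and that the closing product identity does follow from the $(q,X)$-symmetry of $\sum_{der=m}\mu(d)\,q^e X^r$. The core idea is the same as the paper's (turn the invariant dimension into a Frobenius-twisted point count over $\FF_q$ and match generating functions), but the execution differs at both ends. The paper never unwinds the Burnside average over $\sigma$: it invokes \cite[Thm.~3.7]{CEF} to identify $\sum_i(-1)^i q^{m-i}\langle\chi, H^i_c(\Pconf_m,\Ql)\rangle$ with the sum of $\chi$ over monic squarefree polynomials of degree $m$ — which is exactly your fixed-point count, repackaged as $\Conf_m(\FF_q)$ with Frobenius cycle type given by the degrees of the irreducible factors — and then evaluates that character sum as an Euler product over monic primes $g$ of $\FF_q[t]$. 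Applying the defining identity of Definition~\ref{def:e} to each local factor $1+(-1)^{n-1}N q^{-\deg(g)s}$ turns the product into $\prod_k \zeta_{\FF_q[t]}(ks)^{e_k(N)}=\prod_k(1-q^{1-ks})^{-e_k(N)}$, so no separate symmetry lemma is needed. Your version re-proves the CEF input from scratch (twisted Grothendieck--Lefschetz plus the Tate purity of $H^*(\Pconf_m,\Ql)$) and replaces the zeta-function step by the Witt-type identity $\prod_j(1-XU^j)^{W_j(q)}=\prod_k(1-qU^k)^{W_k(X)}$; this is more self-contained and makes the $(q,N)$-symmetry explicit, at the cost of an extra combinatorial identity that the paper's Euler-product bookkeeping absorbs automatically.
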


\begin{proof} 
By orthogonality of characters, the dimension of the $S_m$-invariants of  $$H^{m+i}_c( \Pconf_{m},\Ql) \otimes  V^{\otimes m} \otimes \sgn^{n-1} $$ is the inner product of the characters of $S_m$ corresponding to $H^{m+i}_c( \Pconf_{m},\Ql)$ and $V^{\otimes m} \otimes \sgn^{n-1} $. Let $\chi$ be the character of $S_m$ associated to $V^{\otimes m} \otimes \sgn^{n-1} $. For a finite field $\mathbb F_q$, we may view $\chi$ as a function on squarefree polynomials of degree $m$ over $\mathbb F_q$ by evaluating it on the conjugacy class of Frobenius. This is a conjugacy class in $S_m$ with one cycle for each irreducible factor of the polynomial, of length equal to the degree of the irreducible factor. It is a special case of \cite[Theorem 3.7]{CEF} that the sum of $\chi(f)$ over all monic squarefree polynomials $f$ of degree $m$ over $\mathbb F_q$ is equal to 
\begin{align*}
\sum_i (-1)^i q^{m-i}  \langle \chi, H^{i}_c( \Pconf_{m},\Ql) \rangle 
&=\sum_i (-1)^{m-i} q^{i}  \langle \chi, H^{m-i}_c( \Pconf_{m},\Ql) \rangle  
\\
&=  \sum_i (-1)^{m-i} q^{i}  \langle \chi, H^{m+i}_c( \Pconf_{m},\Ql) \rangle,
\end{align*}
 by Poincar\'{e} duality.

Next we will compute the sum of this character $\chi$ over squarefree monic polynomials, showing it is equal as a polynomial in $q$ to $(-1)^{mn-m}$ times the coefficient of $U^m$ in $\prod_{k=1}^\infty (1 - q U^k )^{-e_k(N)}$. Because the coefficients of this polynomial are uniquely determined by its values, we will conclude that the dimensions are as stated.

First we calculate the character $\chi$ of $V^{\otimes m} \otimes \sgn^{n-1}$. We can think of $V$ as admitting a basis $v_1,\dots, v_N$, which induces a basis on $V^{\otimes m}$, on which the conjugacy class $\sigma$ will act by permutations. The trace is the number of basis vectors that are fixed. A basis vector, corresponding to an $m$-tuple of $v_1,\dots,v_N$, is fixed if and only if it is constant on each cycle of $\sigma$. Thus  the number of such vectors is $N$ to the number of cycles of $\sigma$, which is  $N$ to the number of prime factors of the polynomial.  The character of the sign representation is $(-1)^m$ times $(-1)$ to the number of cycles of $\sigma$. Altogether we deduce that the sum of this character is
\[\sum_{ \substack{f \in \mathbb F_q[x], ~\textrm{monic}\\
f~\textrm{squarefree}\\
 \deg(f) = m} }   ( (-1)^{n-1} N)^{\omega(f)} (-1)^{(n-1)m}, \] 
 where $\omega(f)$ is the number of prime factors of $f$. 
 But this is  equal to $(-1)^{(n-1)m}$ times the coefficient of $q^{-ms} $ in 
 \begin{align*}
  \sum_{ \substack{f \in \mathbb F_q[x], ~\textrm{monic}\\f ~\textrm{squarefree} }}   ( (-1)^{n-1} N)^{\omega(f)} q^{- {\operatorname{deg}(f) s}  }
  &= \prod_{ 
 \substack{
 g \in \mathbb F_q[x], ~\textrm{monic}\\g~\textrm{prime}}}  ( 1 + (-1)^{n-1 } N  q^{- \operatorname{deg}(g) s})\\
&=  \prod_{ \substack{g \in \mathbb F_q[x], ~\textrm{monic}\\ g~\textrm{prime}}} \prod_{k=1}^\infty  (1 - q^{-k \operatorname{deg}(g) s} )^{ -e_k(N)},
\end{align*}
by Definition~\ref{def:e}.
But we recognise that the right hand side is equal to 
\begin{align*}
 \prod_{k=1}^{\infty}  \zeta_{\mathbb F_q[t]} ( ks) ^{e_k(N)}  = \prod_{k=1}^\infty (1 - q^{1-ks})^{-e_k(N)}.
 \end{align*}
 Taking $U = q^{-s}$, we observe that the character sum is $(-1)^{nm-m}$ times the coefficient of $U^m$ in $\prod_{k=1}^\infty (1- q U^k)$, as claimed.
\end{proof}

We may  simplify the  formula in Lemma~\ref{homotopy-pconf} by 
introducing a sum over $m$, as follows.

\begin{cor}\label{homotopy-algebraic} Let $j,n,d,N\in \NN$, 
such that   $n>3$ and $d(n-3) \geq  j$, 
and let $V$ be a vector space of dimension $N$.
Then  
$$
\sum_{m=0}^d  \dim  (H^{mn - m - j  }_c (\Pconf_m,\Ql) \otimes V^{\otimes m}  \otimes \sgn^{n-1} )^{S_m} $$ 
is the coefficient of $q^{-j}$ in $\prod_{k=1}^\infty (1 - (-q)^{1 - k (n-2)}  )^{-e_k(N)}$. \end{cor}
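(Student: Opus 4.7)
The plan is to reinterpret the sum as a single coefficient extraction in a formal Laurent series, via a substitution $U \mapsto q^{-(n-2)}$ that collapses the $U^m$-direction, followed by a sign change $q \mapsto -q$ to match the asserted generating function.

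First I would apply Lemma~\ref{homotopy-pconf} term-by-term, setting $i = m(n-2) - j$ so that the cohomological degree $m+i$ equals $mn - m - j$. Writing $F(q,U) = \prod_{k \geq 1}(1 - qU^k)^{-e_k(N)}$, the lemma yields
$$\dim \bigl(H^{mn-m-j}_c(\Pconf_m,\Ql) \otimes V^{\otimes m} \otimes \sgn^{n-1}\bigr)^{S_m} = (-1)^{mn + m(n-2) - j}\bigl[q^{m(n-2)-j} U^m\bigr]F(q,U),$$
and the sign simplifies, since $(-1)^{mn + m(n-2) - j} = (-1)^{2m(n-1) - j} = (-1)^j$, independently of $m$.

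Second I would recognise the formal series identity
$$\sum_{m \geq 0} \bigl[q^{m(n-2)-j} U^m\bigr] F(q,U) = \bigl[q^{-j}\bigr] F(q, q^{-(n-2)}) = \bigl[q^{-j}\bigr] \prod_{k \geq 1}\bigl(1 - q^{1-k(n-2)}\bigr)^{-e_k(N)}.$$
The substitution $U = q^{-(n-2)}$ is legitimate because $n > 3$ forces every exponent $1 - k(n-2)$ to be negative, so the right-hand side is a well-defined formal power series in $q^{-1}$ whose coefficient of $q^{-j}$ only picks up finitely many $m$. To pass from this full sum to the truncated sum $\sum_{m=0}^d$, I would invoke the support result established in the proof of Lemma~\ref{proof-support}: for $m \geq 1$, $H^i_c(\Pconf_m,\Ql)$ vanishes unless $m+1 \leq i \leq 2m$. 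Applied with $i = mn - m - j$, non-vanishing of the $m$-th summand forces $m(n-3) \leq j$. Under the hypothesis $d(n-3) \geq j$ with $n > 3$, any $m \geq d+1$ gives $m(n-3) \geq (d+1)(n-3) > j$, so the corresponding dimension is zero and the tail of the sum contributes nothing.

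Third, I would perform the sign substitution $q \mapsto -q$. Writing $G(q) = \prod_{k \geq 1}(1-q^{1-k(n-2)})^{-e_k(N)}$, substituting $-q$ for $q$ multiplies the coefficient of $q^{-j}$ by $(-1)^{-j} = (-1)^j$, and simultaneously produces $\prod_k(1-(-q)^{1-k(n-2)})^{-e_k(N)}$, which is the target generating function. Assembling the three steps,
$$\sum_{m=0}^d \dim(\cdot) = (-1)^j \bigl[q^{-j}\bigr] G(q) = \bigl[q^{-j}\bigr] G(-q),$$
as required. The potential obstacle is not in the algebra but in the bookkeeping: one must check that the substitution $U = q^{-(n-2)}$ yields a legitimate formal-series identity and that the finite truncation $m \leq d$ loses no information. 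Both issues are handled cleanly by the hypotheses $n>3$ and $d(n-3) \geq j$ together with the vanishing of $H^i_c(\Pconf_m, \Ql)$ outside $[m+1,2m]$.
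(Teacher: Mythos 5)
Your proof is correct and follows essentially the same route as the paper: apply Lemma~\ref{homotopy-pconf} with $i = m(n-2)-j$, simplify the sign to $(-1)^j$, substitute $U = q^{2-n}$ to collapse the sum, and then flip $q \mapsto -q$. The only cosmetic difference is how you justify truncating to $m \leq d$: the paper observes directly that no monomial $q^a U^b$ in $\prod_k(1-qU^k)^{-e_k(N)}$ has $a > b$, hence non-vanishing of the $m$th coefficient forces $m(n-3) \leq j$, whereas you reach the identical inequality via the cohomological support of $\Pconf_m$ (degrees in $[m+1,2m]$) as in Lemma~\ref{proof-support}; these are two phrasings of the same constraint.
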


\begin{proof}  Lemma~\ref{homotopy-pconf} implies that 
$\dim  (H^{mn - m - j  }_c (\Pconf_m,\Ql) \otimes V^{\otimes m}  \otimes \sgn^{n-1} )^{S_m} $ is   equal to $(-1)^{2mn-2m - j}= (-1)^j $ times the coefficient of $q^{mn-2m-j} U^m$ in the infinite product $\prod_{k=1}^\infty (1 - q U^k )^{-e_k(N)}$.  This infinite product has a power series  expansion 
\begin{equation}\label{eq:U}
\prod_{k=1}^\infty (1 - q U^k )^{-e_k(N)}=
\sum_{m=0}^\infty  c_m q^{mn-2m-j} U^m,
\end{equation}
for appropriate coefficients $c_m$. Our assumption that $d(n-3)\geq j$ ensures that only $m\leq d$ occur in this sum, 
since there are  no monomials where the power of $q$ is greater than the power of $U$ appearing. We have therefore shown that
$$
\sum_{m=0}^d  \dim  (H^{mn - m - j  }_c (\Pconf_m,\Ql) \otimes V^{\otimes m}  \otimes \sgn^{n-1} )^{S_m} =(-1)^j \sum_{m=0}^d  c_m.
$$
To calculate this we evaluate \eqref{eq:U} at $U = q^{2-n}$.
Replacing the variable $q$ with $-q$ removes the factor of $(-1)^j$ and so completes the proof.  
\end{proof}

Our next task is to show that  precisely the same 
power series occurs in the context of the double loop space.

\begin{lemma}\label{homotopy-topological} Assume $n \geq 2$. Let $X$ be the smooth vanishing set in $\mathbb C^n$ of a polynomial $f$ whose leading terms define a smooth hypersurface in projective space.   Let $\Omega^2 X$ be the (based) double loop space of $X$.
Then $\dim H^j(\Omega^2 X,\mathbb Q) $ is the coefficient of $q^{-j}$ in $\prod_{k=1}^\infty (1 - (-q)^{1 - k (n-2)}  )^{-e_k(N)}$, where
 $N $ is the dimension of $ H^{n-1}(X,\mathbb Q)$.
\end{lemma}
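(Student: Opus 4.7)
The plan is to identify $X$ with a wedge of spheres and then invoke classical loop-space calculations. As a smooth affine hypersurface in $\mathbb{A}^n$ whose leading terms define a smooth projective hypersurface, $X$ is tame at infinity in the sense of Broughton and is therefore homotopy equivalent to a wedge $\bigvee^N S^{n-1}$, with $N = \dim H^{n-1}(X,\QQ)$.

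I would then appeal to the Hilton--Milnor splitting
\[ \Omega \bigvee\nolimits^N S^{n-1} \simeq \prod_w \Omega S^{|w|(n-2)+1}, \]
where $w$ runs over basic Lie words in $N$ letters, and the number of those of length $k$ is the Witt number $\ell_k(N) := \tfrac{1}{k} \sum_{d \mid k} \mu(d) N^{k/d}$. Looping once more gives
\[ P_{\Omega^2 X}(t) = \prod_{k \geq 1} P_{\Omega^2 S^{k(n-2)+1}}(t)^{\ell_k(N)}. \]
Serre's classical calculation then identifies each factor: writing $m = k(n-2)$, one has $P_{\Omega^2 S^{m+1}}(t) = 1 + t^{m-1}$ when $m$ is even (since $\Omega^2 S^{m+1} \simeq_{\QQ} K(\QQ, m-1)$), and $P_{\Omega^2 S^{m+1}}(t) = (1 + t^{2m-1})/(1 - t^{m-1})$ when $m$ is odd (since $\Omega^2 S^{m+1} \simeq_{\QQ} K(\QQ, m-1) \times K(\QQ, 2m-1)$).

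It remains to match this product against the target. Substituting $q = 1/t$, so that $(-q)^{1-k(n-2)} = (-t)^{k(n-2)-1}$ and the coefficient of $q^{-j}$ becomes the coefficient of $t^j$, the target rewrites as $\prod_k (1 - (-t)^{k(n-2)-1})^{-e_k(N)}$. M\"obius inversion applied to the defining identity $\prod_k (1-T^k)^{-e_k(N)} = 1 + (-1)^{n-1} N T$ yields $e_k(N) = -\ell_k(N)$ for all $k$ when $n$ is even, while for $n$ odd one obtains $e_k(N) = \ell_k(N)$ when $k$ is odd, $e_k(N) = -\ell_k(N)$ when $4 \mid k$, and $e_k(N) = -\ell_k(N) - \ell_{k/2}(N)$ when $k \equiv 2 \pmod{4}$. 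The hardest part will be the bookkeeping in this final matching: when $n$ is odd and $k$ is odd, the ``numerator'' factor $(1 + t^{2k(n-2)-1})^{\ell_k(N)}$ appearing in Serre's formula must be reindexed as a contribution at $k' = 2k$ and absorbed into the factor of the target at $k' \equiv 2 \pmod{4}$, which is precisely the origin of the extra $\ell_{k/2}(N)$ term in the expression for $e_k(N)$ in that case.
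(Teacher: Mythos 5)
Your proposal is correct, and the verification of the combinatorial identities relating $e_k(N)$ to the Witt numbers $\ell_k(N)$ checks out in all cases (including the reindexing of the numerator factors $(1+t^{2k(n-2)-1})^{\ell_k}$ for $k$ odd into the $k'\equiv 2\bmod 4$ slots, which is exactly where the extra $-\ell_{k/2}$ in $e_k$ comes from). The overall strategy coincides with the paper's up to the last step: both use Broughton's tameness criterion to identify $X$ with $\bigvee^N S^{n-1}$, and both extract the Witt numbers $\frac{1}{k}\sum_{d\mid k}\mu(d)N^{k/d}$ from the Hilton--Milnor decomposition. Where you diverge is in how the cohomology of the double loop space is then computed: you loop the Hilton--Milnor splitting once more to write $\Omega^2 X$ rationally as a product of double loop spaces of spheres and multiply Serre's Poincar\'e series $1+t^{m-1}$ and $(1+t^{2m-1})/(1-t^{m-1})$, whereas the paper first computes $\pi_*(X)\otimes\mathbb{Q}$ from Hilton's theorem and then invokes Sullivan's result that $H^*(\Omega^2 X,\mathbb{Q})$ is the free graded-commutative algebra on the rational homotopy shifted down by two. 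The two routes are equivalent in substance (the parity bookkeeping for $n$ odd is identical), but yours is slightly more self-contained on the topology side, needing only Serre's classical sphere computations rather than the general structure theorem for the rational cohomology of H-spaces; the paper's route has the mild advantage of producing the generating function uniformly as a free algebra on $(-1)^{k(n-2)-1}e_k(N)$ generators in degree $k(n-2)-1$, so that the case analysis is confined to the homotopy-group computation.
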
 

\begin{proof}   
First we will show that the homotopy group $\pi_{k(n-2)+1}(X) \otimes \mathbb Q$ has dimension $(-1)^{k(n-2)-1} e_k(N)$ for all $k$, and that all other rational homotopy groups of $X$ vanish. We will then use these homotopy groups to calculate the cohomology of the double loop space. Observe that $X$ is homotopic to $\wedge^N S^{n-1}$, which follows from  \cite[Thm.~2]{Broughton} once we check that the polynomial $f$ defining $X$ is ``tame''. But this follows since its leading terms define a smooth hypersurface, whence the  partial derivatives of its leading terms have no common zero outside the origin. Thus  everywhere far from the origin at least one of the partial derivatives is large, which is precisely the criterion of tameness. 

The homotopy groups of $\wedge^N S^{n-1}$ were calculated by Hilton
\cite[Cor.~4.10]{Hilton},
with the outcome that 
$$
\pi_i ( \wedge^N S^{n-1}) = \sum_w  \pi_i(S^{(n-2)w + 1} )^{\frac{1}{w}  \sum_{d\mid w}  N^{w/d} \mu(d)} . 
$$
For rational homotopy groups, 
only $\pi_m$ is non-vanishing 
for odd dimensional spheres $S^m$.
In particular, for $n$ even we get 
$$
\pi_{ k(n-2)+1}(\wedge^N S^{n-1}) \otimes \mathbb Q= \mathbb Q^{ \frac{1}{k}  \sum_{d\mid k}  N^{k/d} \mu(d)},
$$ 
which has dimension 
$-e_k(N)=(-1)^{k(n-2)-1} e_k(N)$ by  \eqref{eq:inversion-formula}.
For  spheres of even dimension, both $\pi_m$ and $\pi_{2m-1}$ have one-dimensional rational homotopy groups. Hence for $n$ odd 
we have
\begin{align*}
\dim_{\mathbb Q} \pi_{k(n-2)+1} (\wedge^N S^{n-1}) \otimes \QQ
=  \frac{1}{k}  \sum_{d\mid k}  N^{k/d} \mu(d)+ 1_{k \equiv 2  \bmod{4}} \frac{2}{k}  \sum_{d\mid k/2}  N^{k/2d} \mu(d).
 \end{align*}
If $k$ is odd then an inspection of 
\eqref{eq:inversion-formula} reveals that the right hand side is equal to 
$e_k(N)=(-1)^{k(n-2)-1} e_k(N)$. If $k$ is even we write the right hand side as 
\begin{align*}
\frac{1}{k}  \sum_{d\mid k}  N^{k/d} \mu(d)+ 1_{k \equiv 2 \bmod{4}} \frac{2}{k}  \sum_{\substack{d\mid k, ~2\mid d}}  N^{k/d} \mu(\tfrac{d}{2}) &= \frac{1}{k}  \sum_{d\mid k}  N^{k/d} \mu(d)- \frac{2}{k}  \sum_{d\mid k, ~2\nmid k/d }  N^{k/d} \mu(d) \\
 &= \frac{1}{k}  \sum_{d|k}  (-N)^{k/d} \mu(d) ,
 \end{align*}
which again matches the formula for $(-1)^{k(n-2)-1} e_k(N)$.

The rational cohomology algebra on $H^*(\Omega^2, \mathbb Q)$ is the free graded commutative algebra on a basis for the rational homotopy of $X$, shifted by two degrees by  \cite[p.~311]{Sullivan}. 
Thus  $H^* (\Omega^2 X, \mathbb Q)$ is the free graded commutative algebra on $(-1)^{k(n-2)-1} e_k(N)$ generators in degree $k(n-2)-1$ for each $k$.

We wish to calculate the generating function $\sum_j \dim H^j(\Omega^2 X,\mathbb Q)  q^{-j} $. The generating function of the graded commutative algebra on one generator in degree $d$ is $(1 + q^{-d})$ if $d$ is  odd and $(1- q^{-d} )^{-1}$  if $d$ is  even.  Since  free products of algebras correspond to products of generating functions, the generating function of the cohomology algebra of $\Omega^2 X$ is $\prod_{k=1}^\infty (1 - (-q)^{1 - k (n-2)}  )^{-e_k(N)}$.
\end{proof} 

We may finally relate
Conjecture~\ref{stabilisation-conjecture} to our conjecture that 
 the spectral sequence in Theorem~\ref{main2} degenerates on the first page for 
$$m + s>- 4 \left( \left \lfloor \frac{d}{k-1} \right \rfloor \left( \frac{n}{2^k} - k + 1\right) - 1\right) .
$$ 
If the spectral sequence fails to degenerate, then some non-zero differential exists, and thus the dimension of $E^{m,s}_{\infty}$ is less than the dimension of $E^{m,s}_1$. To check that the spectral sequence degenerates, it is therefore sufficient to check that 
\begin{align*}
\dim H^{i+2d(n-k)} ( \Mor_{d,P}(\mathbb A^1,X), \mathbb Q)
=
\sum_{m+s = i} \dim E^{m,s}_1,
\end{align*}
since the left hand side is equal to 
$\dim H^{i+2d(n-k)} ( \Mor_{d,P}(\mathbb A^1,X), \mathbb Q_\ell)$.
   Under Conjecture~\ref{stabilisation-conjecture}, 
 we conclude from Lemma~\ref{lem:homotopic} that 
$$
\sum_{m+s = i} \dim E^{m,s}_1=
\dim H^{-i} ( Hom_{d,P} (\mathbb C, X), \mathbb Q)= H^{-i} ( \Omega^2 X, \mathbb Q).
$$
Let  $N=\dim H^{n-1}_c(X, \Ql)$. Then, by Poincar\'e duality and the universal coefficient theorem, we also have  $N=\dim H^{n-1}(X,\mathbb Q)$. (Note that we are also implicitly using the comparison of \'{e}tale and singular cohomology.) 
We now appeal to   the formulae of Corollary~\ref{homotopy-algebraic} and Lemma~\ref{homotopy-topological}, with the outcome that 
\begin{align*}
\dim H^j( \Omega^2 X , \mathbb Q) 
=  \sum_{m=0}^d \dim  (H^{mn - m - j  }_c (\Pconf_m,\Ql) \otimes H^{n-1}_c(X,\Ql)^{\otimes m}  \otimes \sgn^{n-1} )^{S_m}  .\end{align*}
We can take $j < 4 \left( \left \lfloor \frac{d}{k-1} \right \rfloor \left( \frac{n}{2^k} - k + 1\right) - 1\right)   \leq   d(n-3)$ to check the condition of Corollary~\ref{homotopy-algebraic}.   
We summarise our findings in the following result.

\begin{theorem}\label{t:top}
Conjecture~\ref{stabilisation-conjecture} implies 
Conjecture~\ref{con:degenerate} when $K=\CC$.
\end{theorem}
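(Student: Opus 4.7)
The plan is to convert the statement into a comparison of dimensions of cohomology groups and then identify those dimensions via the generating-function calculations already set up in Corollary~\ref{homotopy-algebraic} and Lemma~\ref{homotopy-topological}. First I would reduce degeneration of the spectral sequence to a numerical equality. Since each differential in the spectral sequence of Theorem~\ref{main2} takes a nonzero image and thus strictly decreases the total dimension of $E^{m,s}_{\infty}$ compared with $E^{m,s}_1$, full degeneration on the first page for $m+s=i$ is equivalent to the identity
\[
\dim H^{i+2d(n-k)}_c(\Mor_{d,P}(\mathbb A^1,X),\mathbb Q_\ell) = \sum_{m+s=i} \dim E_1^{m,s},
\]
valid in the stated range. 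Hence it will suffice to prove this dimension equality under Conjecture~\ref{stabilisation-conjecture}.

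Next I would use Conjecture~\ref{stabilisation-conjecture} to rewrite the left-hand side. Setting $j=-i$, perfect duality of the pairing gives $\dim H^{2d(n-k)-j}_c(\Mor_{d,P}(\mathbb A^1,X),\mathbb Q)=\dim H^{j}(Hom_{d,P}(\mathbb C,X),\mathbb Q)$, and Lemma~\ref{lem:homotopic} identifies the latter with $\dim H^{j}(\Omega^2 X,\mathbb Q)$. Comparison of \'etale cohomology with $\mathbb Q_\ell$-coefficients and singular cohomology with $\mathbb Q$-coefficients on a complex variety lets me swap freely between the two, so the problem reduces to showing
\[
\dim H^{j}(\Omega^2 X,\mathbb Q) = \sum_{m=0}^{d} \dim\bigl(H^{mn-m-j}_c(\Pconf_m,\mathbb Q_\ell)\otimes H^{n-1}_c(X,\mathbb Q_\ell)^{\otimes m}\otimes \sgn^{n-1}\bigr)^{S_m}
\]
for $j$ in the allowed range, where I have used that $E^{m,s}_1$ vanishes for $m>d$ (Lemma~\ref{proof-support}), so the sum truncation at $d$ is automatic.

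The final step is to check both sides equal the same coefficient in the same generating series. Let $N=\dim H^{n-1}_c(X,\mathbb Q_\ell)=\dim H^{n-1}(X,\mathbb Q)$, the latter equality by Poincar\'e duality and the universal coefficient theorem. Lemma~\ref{homotopy-topological} identifies $\dim H^{j}(\Omega^2 X,\mathbb Q)$ with the coefficient of $q^{-j}$ in $\prod_{k\geq 1}(1-(-q)^{1-k(n-2)})^{-e_k(N)}$; Corollary~\ref{homotopy-algebraic} identifies the right-hand side of the displayed equality above with the same coefficient in the same product, provided the numerical hypothesis $d(n-3)\geq j$ of that corollary is met. The only real obstacle, and the step that needs care, is verifying this condition: the range $j<4(\lfloor d/(k-1)\rfloor(n/2^k-k+1)-1)$ imposed by Conjecture~\ref{stabilisation-conjecture} must be checked to be contained in $\{j:j\leq d(n-3)\}$. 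This is a routine estimate from the hypotheses $d\geq k-1$ and $n>2^k(k-1)$, which make the stabilisation range substantially smaller than $d(n-3)$, and I would carry it out by bounding $4\lfloor d/(k-1)\rfloor(n/2^k-k+1)\leq 4 d/(k-1)\cdot n/2^k$ and comparing with $d(n-3)$. Once the hypothesis of Corollary~\ref{homotopy-algebraic} is verified, matching the two generating function coefficients gives the required equality of dimensions, which in turn forces the spectral sequence to degenerate on the first page, yielding Conjecture~\ref{con:degenerate}.
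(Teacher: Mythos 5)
Your proposal is correct and follows essentially the same route as the paper's own argument: reduce degeneration to an equality of dimensions, pass through Conjecture~\ref{stabilisation-conjecture} and Lemma~\ref{lem:homotopic} to identify $\dim H^{i+2d(n-k)}_c(\Mor_{d,P}(\mathbb A^1,X),\mathbb Q)$ with $\dim H^{-i}(\Omega^2 X,\mathbb Q)$, and then invoke Corollary~\ref{homotopy-algebraic} and Lemma~\ref{homotopy-topological} to match generating-function coefficients. Your explicit attention to verifying the hypothesis $j\le d(n-3)$ of Corollary~\ref{homotopy-algebraic} is a sound addition; the paper asserts this inequality without detail, and your sketch (bounding $4\lfloor d/(k-1)\rfloor(n/2^k-k+1)$ by $4dn/((k-1)2^k)$ and comparing with $d(n-3)$ under $n>2^k(k-1)$) is the right way to fill it in.
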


\end{document}